\documentclass[11pt]{article}

\usepackage{graphicx}
\usepackage[utf8]{inputenc}
\usepackage[T1]{fontenc}
\usepackage{amsmath}
\usepackage{amsthm}
\usepackage{amsfonts}
\usepackage{amssymb}
\usepackage[shortlabels]{enumitem}

\usepackage[normalem]{ulem}

\usepackage{natbib}

\usepackage{url}
\usepackage{slashed}
\usepackage{bbm}
\usepackage{tikz}
\usepackage{tikz-cd}
\usepackage{mathtools}
\usepackage{esint}

\usepackage{ifthen}
\usepackage{xspace}
\usepackage{fancyhdr}
\usepackage{aliascnt}
\usepackage[textsize=small]{todonotes}
\usepackage{bookmark}
\usepackage{caption}

\usetikzlibrary{calc}
\usepackage{cancel}
\usepackage{array}

\usepackage{array,booktabs,xcolor}
\definecolor{pos}{RGB}{200,0,0}      
\definecolor{spos}{RGB}{0,0,200}     
\definecolor{zcol}{gray}{0.35}

\usepackage[margin=1in]{geometry}

\usepackage[T1]{fontenc}

\usepackage{mlmodern}
\usepackage{eucal}

\usepackage{microtype}

\usepackage{setspace}
\tikzset{curve/.style={settings={#1},to path={(\tikztostart)
    .. controls ($(\tikztostart)!\pv{pos}!(\tikztotarget)!\pv{height}!270:(\tikztotarget)$)
    and ($(\tikztostart)!1-\pv{pos}!(\tikztotarget)!\pv{height}!270:(\tikztotarget)$)
    .. (\tikztotarget)\tikztonodes}},
    settings/.code={\tikzset{quiver/.cd,#1}
        \def\pv##1{\pgfkeysvalueof{/tikz/quiver/##1}}},
    quiver/.cd,pos/.initial=0.35,height/.initial=0}

\newtheorem{theorem}{Theorem}[section]
\newtheorem{prop}[theorem]{Proposition}
\newtheorem{lemma}[theorem]{Lemma}
\newtheorem{cor}[theorem]{Corollary}

\theoremstyle{definition}
\newtheorem{definition}[theorem]{Definition}

\newtheorem{rmk}[theorem]{Remark}

\newcommand{\R}{\mathbb{R}}
\newcommand{\C}{\mathbb{C}}

\newcommand{\SO}{\mathrm{SO}}

\newcommand{\U}{\mathrm{U}}

\newcommand{\del}{\partial}
\newcommand{\delbar}{\overline{\partial}}

\newcommand{\llangle}{\langle\!\langle}
\newcommand{\rrangle}{\rangle\!\rangle}

\usepackage{hyperref}
\hypersetup{
	colorlinks,
	linkcolor={red!50!black},
	citecolor={blue!50!black},
	urlcolor={blue!80!black}
}
\numberwithin{equation}{section}

\begin{document}
	
\title{The second variation of $2$-spheres in the $2n$-sphere}
	
\author{Gavin Ball, Jesse Madnick}
\date{September 2026}

\newcommand{\Addresses}
{{  \bigskip
\noindent	\textsc{University of Missouri} \par\nopagebreak
\noindent	\textsc{Columbia, MO, United States} \par\nopagebreak
\noindent	\texttt{gavin.ball@missouri.edu} \\

\medskip
\noindent	\textsc{Seton Hall University} \par\nopagebreak
\noindent	\textsc{South Orange, NJ, United States} \par\nopagebreak
\noindent	\texttt{jesse.ochs.madnick@gmail.com} \\
}}
	
\maketitle
	
\begin{abstract}
	We prove sharp upper and lower bounds for the Morse index and nullity of linearly full branched minimal $2$-spheres in the round $2n$-sphere. In particular, we show that the Morse index of a linearly full minimal $2$-sphere in the round $2n$-sphere is at least $n(n-1)(2n+1),$ with equality if and only if the $2$-sphere is twistor-equivalent to the Bor\r{u}vka sphere. Our techniques also apply more generally to give bounds for the Morse index and nullity for totally isotropic surfaces in the $2n$-sphere.
\end{abstract}

{\small
\tableofcontents}
	
\section{Introduction}

\indent \indent The study of minimal 2-spheres in the unit sphere is a classical topic in differential geometry. The foundations  were laid by Calabi \cite{Calabi67}, who uncovered a beautiful connection with the geometry of holomorphic curves. Calabi's work, developed further by Chern \cite{Chern70} and Barbosa \cite{Barbosa75}, shows that a minimal $2$-sphere can be linearly full only in an even-dimensional sphere and that the area of such linearly full minimal 2-spheres $S^2 \to S^{2n}$ is quantized and bounded below by $2 \pi n(n+1).$ Here, a map is said to be \emph{linearly full} if its image is contained in no proper totally geodesic subsphere.

In this article, we study the second variation of area of linearly full (branched) minimal immersions $f: S^2 \to S^{2n}$. Our main objects of interest are the Morse index and nullity. The Morse index, $\operatorname{Ind}(f),$ counts the number of variations that decrease area to second-order, while the nullity, $\operatorname{Null}(f),$ counts the number of Jacobi fields. These two quantities are fundamental invariants of minimal submanifolds and are the subject of several deep and influential conjectures.

We obtain two-sided bounds for both the index and the nullity. For the index, we have:
\begin{theorem}\label{thm:introS2IndexTheorem}
    Let $f: S^{2} \to S^{2n}$ be a linearly full, branched minimal immersion. Then
    \begin{equation*}
        (n-2) \frac{\operatorname{Area}(f)}{\pi} + n(n+3) \leq \operatorname{Ind}(f) \leq (n-1) \frac{\operatorname{Area}(f)}{\pi} - n(n-1).
    \end{equation*}
    In particular,
    \begin{equation*}
        \operatorname{Ind}(f) \geq n(n-1)(2n+1),
    \end{equation*}
    with equality if and only if $f$ is twistor-equivalent to the Bor\r{u}vka immersion.
\end{theorem}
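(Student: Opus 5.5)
The strategy is to decompose the normal bundle of $f$ into holomorphic pieces coming from the harmonic sequence of Calabi, and to reduce the index and nullity to dimension counts of holomorphic sections of line bundles over $S^2=\mathbb{CP}^1$, computed via Riemann--Roch.

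\textbf{Step 1 (setup).} Since $f$ is linearly full and minimal, Calabi's theory gives a totally isotropic harmonic sequence. The complexified normal bundle splits as
\[
N_\C=\bigoplus_{k=2}^{n}(L_k\oplus\overline{L_k})\oplus L_{n+1}',
\]
where the $L_k$ are the osculating line bundles, with degrees determined by the branching data, and $L_{n+1}'$ is the real last line bundle. Set $r_k$ to be the ramification degrees of the successive osculating curves. By Barbosa's formula the area is $\operatorname{Area}(f)=4\pi d$, where $d$ is the degree of the directrix curve in the twistor space $\SO(2n+1)/\U(n)$, and $d\ge n(n+1)/2$.

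\textbf{Step 2 (Jacobi operator on the pieces).} Write the Jacobi operator $J=\Delta^\perp-\mathcal{B}+2$ in terms of $\delbar$-operators on the line bundles. For sections of $L_k$, I expect an identity of the form
\[
\langle J\xi,\xi\rangle=4\|\delbar_{L_k}\xi\|^2+c_k\|\xi\|^2,
\]
up to curvature terms. Here $c_k$ is computed from the Gauss/Codazzi/Ricci equations, as in the Ejiri--Kotani and Montiel--Urbano style analysis.

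\textbf{Step 3 (lower bound).} Build negative-definite subspaces from holomorphic sections of suitable twists. Concretely, take $H^0(\mathcal{O}(\deg L_k\otimes K^{-1}))$-type spaces on which the Weitzenböck identity forces $Q<0$, then sum their dimensions over $k=2,\dots,n$. Riemann--Roch gives $\dim=\deg+1$. Telescoping the degrees through the Plücker formulas expresses the total as
\[
(n-2)\frac{\operatorname{Area}}{\pi}+n(n+3).
\]
The $n(n+3)$ term includes contributions such as conformal (Möbius) variations. The main obstacle is proving that these subspaces are independent and that $Q$ is strictly negative on their span. The cross terms between different $L_k$ must be shown to vanish by type or isotropy.

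\textbf{Step 4 (upper bound).} Show that $Q\ge 0$ on the $L^2$-orthogonal complement of a space of dimension
\[
(n-1)\frac{\operatorname{Area}}{\pi}-n(n-1).
\]
Here a section $\xi$ with $\xi\perp$ the holomorphic pieces has $\|\delbar\xi\|^2$ controlled by the first eigenvalue, which is bounded below via a Kodaira-type vanishing on the line bundles. Alternatively, count all sections that could make $Q$ negative by bounding $h^0$ of the relevant bundles from above by $\deg+1$.

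\textbf{Step 5 (consequences).} Plugging $\operatorname{Area}\ge 2\pi n(n+1)$ into the lower bound gives
\[
2n(n+1)(n-2)+n(n+3)=n(2n^2-n-1)=n(n-1)(2n+1).
\]
Equality forces minimal area, i.e.\ the directrix curve has minimal degree with no branching. Such curves are exactly the rational normal curves, up to the twistor action. These correspond to the Borůvka sphere up to twistor equivalence. Conversely, the Borůvka sphere attains the bound since the count is exact.
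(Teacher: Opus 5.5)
There is a genuine gap. Steps 2--4 contain no mechanism that produces either bound, and the count in Step 3 cannot give the stated one.

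\textbf{The second variation does not decouple.} The holomorphic normal bundle $\mathcal{N}$ is only holomorphically \emph{filtered}, with quotients $\mathcal{G}_2,\dots,\mathcal{G}_n$. In the smooth splitting, $\delbar^{\perp}$ has off-diagonal entries $-\Phi_{k+1}^\dagger$, which vanish only on the ramification divisors. So the cross terms you hope to kill ``by type or isotropy'' are really there. Even on a single piece $V=A_k\mathbf{f}_{\overline k}$ with $k\ge 3$, one gets
\begin{equation*}
\tfrac14 I_f^\C(V,V)=\lVert\delbar_k A_k\rVert^2+\int_\Sigma\bigl(\lvert\Phi_k\rvert^2-\tfrac12\bigr)\lvert A_k\rvert^2\,dA,
\end{equation*}
which has a non-constant weight rather than a constant $c_k$.

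\textbf{The count in Step 3 has the wrong coefficient.} Summing $h^0$ of twists $L_k\otimes\mathcal{K}^{-m_k}$ over $k=2,\dots,n$ gives, by Riemann--Roch, $\deg\mathcal{N}$ plus a constant. After doubling this is $\operatorname{Area}(f)/\pi+O(n)$, essentially Ejiri's bound $2h^0(\mathcal{N})$, with coefficient $1$ on the area. The target has coefficient $n-2$. Twisting by powers of $\mathcal{K}$ only shifts constants, so no choice of twists repairs this for $n\ge 4$. Moreover, such sections are not sections of the normal bundle, so you would still need a map into $\Gamma(\mathcal{N})$.

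\textbf{Step 4 has the same problem.} For $n\ge 3$ the index exceeds $2h^0(\mathcal{N})$ (compare Ejiri's bound with the bound to be proved). Hence $I_f$ is not nonnegative on the orthogonal complement of $H^0(\mathcal{N})$, and you never identify the larger space or explain why $Q\ge 0$ off it.

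\textbf{The missing idea.} The required counts are sums over \emph{pairs} of osculating bundles. On $S^2$ the lower bound is twice
\begin{equation*}
\sum_{r=2}^n h^0(\mathcal{G}_r)+\sum_{k+2\le r}h^0(\mathcal{G}_k\otimes\mathcal{G}_r),
\end{equation*}
which equals $(n-2)\operatorname{Area}(f)/\pi+n(n-1)+2a_n$. The upper bound is twice $\sum_{j<r}h^0(\mathcal{K}\otimes\mathcal{G}_j\otimes\mathcal{G}_r)$. The paper reaches these by writing $\tfrac14 I_f^\C=q_0$ as the first of a chain of forms $q_k$ on $\mathcal{E}_0=\mathcal{N}$ and $\mathcal{E}_k=\mathcal{G}_k\otimes\mathcal{Q}_{k+1}$. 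The last form satisfies $q_{n-1}\ge 0$, and consecutive forms satisfy
\begin{equation*}
q_k(W)-q_{k+1}(Z)=\lVert\delbar_{\mathcal{E}_k}W-P_{k+1}^\dagger Z\rVert^2-\lVert\del_{\mathcal{E}_{k+1}}Z+P_{k+1}W\rVert^2 .
\end{equation*}
\begin{itemize}
\item Solving $\delbar_{\mathcal{E}_k}W=P_{k+1}^\dagger Z$ is unobstructed on $S^2$, since $H^1(\mathcal{E}_k)=0$. It carries a negative subspace of $q_{k+1}$ injectively to one of $q_k$.
\item Holomorphic sections of $\mathcal{E}_k$ modulo $H^0(\mathcal{G}_k\otimes\mathcal{G}_{k+1})$ add further negative directions, because $q_k(W)=-\lVert P_{k+1}W\rVert^2$ for holomorphic $W$.
\item The upper bound comes from the reverse transfer: solve $\del_{\mathcal{E}_{k+1}}Z=-P_{k+1}W$ after quotienting by the obstruction space $H^0(\mathcal{K}\otimes\mathcal{E}_{k+1})$.
\end{itemize}
This is what turns holomorphic sections of $\mathcal{G}_k\otimes\mathcal{G}_r$ into negative normal variations. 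Nothing in your outline plays that role.

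\textbf{Smaller points.}
\begin{itemize}
\item In Step 1, $\bigoplus_{k=2}^n(L_k\oplus\overline{L_k})$ already has rank $2n-2$. There is no extra real summand in the normal bundle; that summand of $\C^{2n+1}$ is spanned by $f$ itself.
\item In Step 5, for $n=2$ the bound $(n-2)\operatorname{Area}(f)/\pi+n(n+3)$ does not involve the area, so equality does not force minimal area. You need the intermediate form $\operatorname{Ind}(f)\ge(n-2)\operatorname{Area}(f)/\pi+n(n-1)+2a_n$ with $a_n=2n+\sum_j b_j$. Its equality case forces all $b_j=0$.
\item Your converse is fine once the two-sided bound is established, since the two bounds coincide exactly when $\operatorname{Area}(f)=2\pi n(n+1)$.
\end{itemize}
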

Before stating the nullity bounds, we introduce some terminology. By the work of Calabi \cite{Calabi67} and Chern \cite{Chern70}, the osculating spaces of a linearly full branched minimal immersion $f : S^2 \to S^{2n}$ define a map $\widehat{\Psi} : S^2 \to \mathrm{Fl}$ into the isotropic flag manifold of $\C^{2n+1},$
\begin{equation*}
    \mathrm{Fl} = \lbrace 0 = P_{n+1} \subset P_n \subset \cdots \subset P_1 \subset \C^{2n+1} \mid \dim_\C P_r = n-r+1, \:\: P_1 \: \text{isotropic} \rbrace.
\end{equation*}
Projecting to individual constituents of the flag gives a sequence of holomorphic maps $\Psi_j,$ $1 \leq j \leq n,$ each mapping from $S^2$ into the Grassmannian of isotropic $(n-j+1)$-planes in $\C^{2n+1}.$ Let $b_j \geq 0$ denote the total ramification degree of $\Psi_j.$ In particular, $b_1$ gives the total branching order of $f.$ Define the \emph{intermediate ramification} $\mathrm{R}(f) \geq 0$ by
\begin{equation*}
    \mathrm{R}(f) = \sum_{j=1}^{n-1} (n-j)b_j.
\end{equation*}

\begin{theorem}\label{thm:introS2NullityTheorem}
    Let $f: S^{2} \to S^{2n}$ be a linearly full, branched minimal immersion. Then
    \begin{equation*}
        \frac{\operatorname{Area}(f)}{\pi}+2(n^2 - 3) - 2 b_1 \leq \operatorname{Null}(f) \leq \frac{\operatorname{Area}(f)}{\pi}+2(n^2 - 3) - 2 b_1 + 4 \mathrm{R}(f).
    \end{equation*}
    In particular,
    \begin{equation*}
        \operatorname{Null}(f) \geq (2n+3)(2n-2),
    \end{equation*}
    with equality if and only if $f$ is twistor-equivalent to the Bor\r{u}vka immersion.
\end{theorem}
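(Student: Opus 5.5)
We reduce the nullity computation to counting holomorphic sections of the normal bundle. Decompose the complexified normal bundle $N_\C f = N \otimes \C$ using the Calabi–Chern harmonic sequence. Away from the branch points, the osculating spaces give an orthogonal splitting
\[
N_\C f = L_2 \oplus \overline{L_2} \oplus \cdots \oplus L_n \oplus \overline{L_n},
\]
where $L_j$ is the $j$-th holomorphic osculating line bundle. The splitting extends across the branch points as a holomorphic splitting, after the usual twisting by ramification divisors. The Jacobi operator $J = \Delta^\perp + \mathcal{R} + \mathcal{B}$ then has a $\delbar$-type factorization. As in Simons' and Ejiri–Kotani's analysis, the Jacobi equation on a section $V = \sum_j (v_j + \overline{v_j})$ reduces to a triangular system of first-order equations $\delbar^\perp$ plus lower-order terms coupling adjacent $L_j$'s.

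First, I would prove that every Jacobi field is determined by its top component in $L_n$, twisted appropriately. The plan is to identify $\ker J$ with the kernel of a holomorphic operator on a rank-$(n-1)$ holomorphic bundle $E \to S^2$ that is filtered by the $L_j$'s. Concretely, $E$ is the bundle whose sections are "holomorphic normal variations", those generated by infinitesimal deformations of the holomorphic twistor lift $\widehat{\Psi}$ into $\mathrm{Fl}$. The real dimension of $\ker J$ is then $2h^0(E)$ minus any real-structure correction. The Killing (conformal) Jacobi fields coming from $\mathfrak{so}(2n+1)$ contribute a known dimension, and these fit inside this count.

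Second, I would compute $\deg E$ by Riemann–Roch. The degree of $L_j$ is given by the Plücker-type formulas $\deg \Psi_j$ in terms of the ramification indices $b_j$ and $\operatorname{Area}(f)/(2\pi)$. Summing gives $\chi(E) = \deg E + \operatorname{rank} E$. After simplification, this should produce exactly $\operatorname{Area}(f)/\pi + 2(n^2-3) - 2b_1$ once the real dimension is doubled and the $\mathfrak{so}(2n+1)$-orbit and reparametrization dimensions are accounted for. The lower bound is then $\operatorname{Null}(f) \geq 2\chi(E) \geq \ldots$, using $h^0 \geq \chi$.

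For the upper bound I need $h^1(E) \leq 2\mathrm{R}(f)$. On $S^2$, $E$ splits by Grothendieck. The filtration by the $L_j$ has successive quotients whose degrees differ by quantities controlled by the ramification $b_j$ of the intermediate maps. If there is no intermediate ramification, all quotients are positive enough and $h^1 = 0$. Each unit of $b_j$ can destroy vanishing in at most $n-j$ graded pieces. Hence $h^1(E) \leq \sum_{j=1}^{n-1} (n-j) b_j = \mathrm{R}(f)$, and the real count doubles this, giving $4\mathrm{R}(f)$ in the bound after the factor of $2$ from complex to real.

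The corollary then follows. Minimizing $\operatorname{Area}/\pi - 2b_1$ uses Barbosa's area quantization $\operatorname{Area} = 2\pi d$ with $d \geq n(n+1)$, together with $b_1 \leq d - n(n+1)$, the degree relation coming from the first Plücker formula. These give $\operatorname{Area}/\pi - 2b_1 \geq 2n(n+1)$, and hence $\operatorname{Null}(f) \geq 2n(n+1) + 2(n^2-3) = (2n+3)(2n-2)$. Equality forces $d - n(n+1) = b_1$ with all other ramification zero. I would then show that this characterizes the twistor lifts that are $\mathrm{SO}(2n+1,\C)$-equivalent to the Borůvka curve, i.e. twistor-equivalence.

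The main obstacle is the first step: showing that $\ker J$ is exactly the space of holomorphic sections of $E$, i.e. that no non-holomorphic Jacobi fields exist. This is needed for the lower bound to be an honest count. I would attempt it with a Bochner/integration-by-parts argument on each graded piece, exploiting the nonpositivity of the relevant curvature terms. I would also need to check the exact constant $2(n^2-3)$ carefully, since the bookkeeping of the real-structure correction and the Killing and conformal fields is where that constant is decided.
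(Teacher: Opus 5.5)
There is a genuine gap, and it is in the step you flag as the main obstacle. As you describe it, your $E$ has rank $n-1$, is filtered by the osculating line bundles, and has ``holomorphic normal variations'' as sections. That is the holomorphic normal bundle $\mathcal N$, filtered by $\mathcal G_2,\dots,\mathcal G_n$. But $I^\C_f(V,V)=4\lVert\delbar^{\perp}V\rVert^2-2\lVert V\rVert^2$, so every nonzero $V\in H^0(\mathcal N)$ has $I^\C_f(V,V)=-2\lVert V\rVert^2<0$. Holomorphic normal sections are therefore index directions; this is Ejiri's bound, and $2\chi(\mathcal N)=\operatorname{Area}(f)/\pi+2(n-3)-2b_1$ has the wrong constant. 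In fact no nonzero Jacobi field is holomorphic, so a Bochner argument for ``no non-holomorphic Jacobi fields'' is aimed at a false statement.

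If $E$ is instead meant to encode superhorizontal deformations of $\widehat\Psi$, those are governed by a linearized superhorizontality operator, not simply by $H^0$ of a bundle. Counting them could at best give a lower bound. Showing that every normal Jacobi field arises this way, which the upper bound needs, is exactly what is missing.

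The missing idea is that the first-order structure does not live inside $\mathcal N$ between adjacent $L_j$'s. It lives along a tower $\mathcal E_0=\mathcal N$, $\mathcal E_k=\mathcal G_k\otimes\mathcal Q_{k+1}$, carrying quadratic forms $q_0=\tfrac14 I^\C_f,q_1,\dots,q_{n-1}$ related by
\[
q_k(W)-q_{k+1}(Z)=\lVert\delbar_{\mathcal E_k}W-P_{k+1}^\dagger Z\rVert^2-\lVert\del_{\mathcal E_{k+1}}Z+P_{k+1}W\rVert^2 .
\]
Solutions of the coupled system $\delbar_{\mathcal E_k}W=P^\dagger_{k+1}Z$, $\del_{\mathcal E_{k+1}}Z=-P_{k+1}W$ are pairs of $q_k$- and $q_{k+1}$-Jacobi fields. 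At the top, $q_{n-1}=\lVert\delbar W\rVert^2$ gives $\nu_{n-1}=h^0(\mathcal G_{n-1}\otimes\mathcal G_n)$. Jacobi fields are transferred between adjacent levels by solving these $\delbar$- and $\del$-equations, with ambiguity $H^0(\mathcal A_k)$ at each step, where $\mathcal A_k=\mathcal G_k\otimes\mathcal G_{k+1}$ for $k\ge1$ and $\mathcal A_0=0$.

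Your explanation of the $\mathrm R(f)$ term is also wrong. On $S^2$ ramification only raises degrees, $a_r=2r+\sum_{j\le r}b_j$, so every bundle filtered by the $\mathcal G_j$ or by products $\mathcal G_j\otimes\mathcal G_k$ has $h^1=0$; ramification never destroys vanishing. For instance, a rank-$(n-1)$ bundle graded by $\mathcal A_1,\dots,\mathcal A_{n-1}$ has $h^1=0$ and $2h^0=\mathrm N(f)+2\mathrm R(f)$. That is the midpoint of the window, not its lower end. Separately, $h^1(E)\le\mathrm R(f)$ would give $\mathrm N+2\mathrm R$, not $\mathrm N+4\mathrm R$.

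In the paper, ramification enters through the obstructions to solving the coupled system:
$c_k=\dim H^0(\mathcal K\otimes\mathcal E_{k+1})/P_{k+1}H^0(\mathcal E_k)$ and $d_k=\dim H^1(\mathcal E_k)/P^\dagger_{k+1}\ker\del_{\mathcal E_{k+1}}$, with $c_k+d_k=(n-k-1)b_{k+1}+h^1(\mathcal A_k)$. This comes from a degree mismatch, not a jump in cohomology: $\mathcal K\otimes\mathcal E_{k+1}\cong(\mathcal E_k/\mathcal A_k)\otimes\mathcal O(B_{k+1})$, and $\mathcal E_k/\mathcal A_k$ has rank $n-k-1$. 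It yields $\bigl\lvert\nu_0-\sum_{k=1}^{n-1}h^0(\mathcal A_k)\bigr\rvert\le\mathrm R(f)$ on $S^2$, and doubling gives the stated window. No subtraction of Killing or reparametrization fields is involved: Killing normal fields are counted, and reparametrizations have no normal part.

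Given the two-sided bound, your derivation of the ``in particular'' is fine, with one correction. Equality also forces $b_1=0$, because $\operatorname{Area}(f)/(2\pi)-n(n+1)=\sum_j(n-j+1)b_j\ge nb_1$. The converse uses $\mathrm R(f)=0$, which makes the two bounds coincide.
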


The arguments used to prove Theorems \ref{thm:introS2IndexTheorem} and \ref{thm:introS2NullityTheorem} apply more generally to \emph{totally isotropic} immersions $f: \Sigma \to S^{2n}$ where $\Sigma$ is a compact Riemann surface (see \S\ref{sect:totallyisotropic} for the definition). A theorem of Calabi \cite{Calabi67} implies that any branched minimal immersion $f : S^2 \to S^{2n}$ is automatically totally isotropic. For genus $g \geq 1,$ not every branched minimal immersion is totally isotropic, but the totally isotropic examples still constitute a large class: a result of Bryant \cite{Bry82} in the case $n=2,$ generalized by Hano \cite{Hano96Immersions} to all $n \geq 2,$ implies that every compact Riemann surface $\Sigma$ admits a totally isotropic immersion $f : \Sigma \to S^{2n}.$

As in the case of the $2$-sphere, the osculating spaces of a totally isotropic branched immersion $f$ define a map $\widehat{\Psi} : \Sigma \to \mathrm{Fl},$ giving a sequence of holomorphic maps $\Psi_j,$ $1 \leq j \leq n,$ from $\Sigma$ into the Grassmannian of isotropic $(n-j+1)$-planes in $\C^{2n+1}.$ Pulling back the tautological flag over $\mathrm{Fl}$ gives rise to a holomorphic flag of bundles $\mathcal{P}_{j}$ on $\Sigma.$ Let $\mathcal{G}_j = (\mathcal{P}_j / \mathcal{P}_{j+1})^*,$ a holomorphic line bundle over $\Sigma$ having $\deg(\mathcal{G}_j) \geq 2j(1-g)$, and let $\mathcal{K}$ denote the canonical bundle of $\Sigma.$

\newpage

\begin{theorem}\label{thm:introgeneralindex}
    Let $f: \Sigma \to S^{2n}$ be a compact, linearly full, totally isotropic branched immersion, where $\Sigma$ has genus $g.$ Then
	\begin{equation*}
		\operatorname{Ind} (f) \geq (n-2) \frac{\operatorname{Area}(f)}{\pi} + 2 \deg \mathcal{G}_n + n(n-1)(1-g) - 2 \sum_{k=1}^{n-2} h^1(\mathcal{G}_k \otimes \mathcal{G}_{k+1}),
	\end{equation*}
    and
    \begin{equation*}
        \operatorname{Ind}(f) \leq (n-1) \left( \frac{\operatorname{Area}(f)}{\pi} - n(1-g) \right) + 2 \sum_{k=1}^{n-1} h^1(\mathcal{K} \otimes \mathcal{G}_k \otimes \mathcal{P}_{k+1}^*).
    \end{equation*}
\end{theorem}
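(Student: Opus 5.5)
The plan is to decompose the complexified normal bundle of $f$ holomorphically along the osculating flag, and then split the Jacobi operator accordingly. Write $N_\C = N^{1,0}\oplus N^{0,1}$, where $N^{1,0}$ is filtered by the isotropic flag $\mathcal{P}_2\subset\cdots\subset\mathcal{P}_1$ modulo the tangent line, with successive quotients $\mathcal{G}_k^*$. For a totally isotropic immersion the second fundamental form respects this filtration, so the Jacobi operator $J=\Delta^\perp - \mathcal{B}+2$ can be written, on sections $s$ of $N^{1,0}$, as
$$\langle Js,\bar s\rangle = 4\|\delbar^{\nabla} s\|^2 + (\text{curvature/zeroth-order terms}).$$
Using a Weitzenböck-type identity adapted to the flag (the \emph{Simons}/\emph{Ejiri–Montiel} style formula), I expect the quadratic form $Q(V,V)$ to split, up to lower-order coupling, into a sum over $k$ of $\delbar$-energies on the graded pieces $\mathcal{G}_k^*$ plus explicit constants tied to $\deg \mathcal{G}_k$.

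\textbf{Lower bound.} I would exhibit a large negative subspace. The natural candidates are holomorphic sections of the bundles $\mathcal{G}_k\otimes\mathcal{G}_{k+1}$-type line bundles that appear as off-diagonal blocks of the Jacobi operator, i.e.\ holomorphic sections of the graded pieces twisted so that the $\delbar$-term vanishes and the curvature term is strictly negative. On each such subspace $Q$ is negative definite. Riemann–Roch gives
$$h^0(\mathcal{G}_k\otimes\mathcal{G}_{k+1}) = \deg(\mathcal{G}_k\otimes\mathcal{G}_{k+1}) + 1-g + h^1(\mathcal{G}_k\otimes\mathcal{G}_{k+1}).$$
Counting real dimensions (a factor $2$) and summing over $k=1,\dots,n-2$, plus a contribution from the last piece $\mathcal{G}_n$ (a real-type bundle, giving $2\deg\mathcal{G}_n$), should give the stated bound. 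The area enters via the Plücker-type identity $\operatorname{Area}(f)/\pi = \deg \mathcal{G}_1$-related sums: one uses $\deg\mathcal{G}_{k+1}-\deg\mathcal{G}_k$ relations coming from ramification and $\deg \mathcal{K}=2g-2$ to rewrite $\sum_k \deg(\mathcal{G}_k\otimes\mathcal{G}_{k+1})$ as $(n-2)\operatorname{Area}/\pi + n(n-1)(1-g)$ plus $2\deg\mathcal{G}_n$. The sign of $h^1$ looks wrong for a naive count. I would therefore take the negative space to be the holomorphic sections, bounding $h^0 \ge \chi$, and the $-2h^1$ then reflects the paper's form, so I would double-check the bookkeeping there. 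Linear independence across different $k$ follows from the grading.

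\textbf{Upper bound.} I would show that any $V$ with $Q(V,V)<0$ must have nonzero projection onto a space controlled by $H^0(\mathcal{K}\otimes\mathcal{G}_k\otimes\mathcal{P}_{k+1}^*)$-type data, or more precisely by the failure of $\delbar$ to be surjective. For $k=1,\dots,n-1$, split $V$ by the filtration and consider the $\delbar$-operator
$$\delbar_k:\Gamma(\mathcal{G}_k\otimes\mathcal{P}_{k+1}^*)\to\Gamma(\mathcal{K}^{-1}\otimes\ldots).$$
Its cokernel is $H^1$, which by Serre duality is identified with the twisted $h^1(\mathcal{K}\otimes\mathcal{G}_k\otimes\mathcal{P}_{k+1}^*)$ appearing in the statement. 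The index is then bounded by the dimension of the space on which the zeroth-order negative term can dominate, namely $\dim_\R$ of the kernels, computed by Riemann–Roch as $(n-1)(\operatorname{Area}/\pi - n(1-g))$, plus twice the cokernel dimensions. The argument is a min–max comparison: $Q$ is nonnegative on the $L^2$-orthogonal complement of these kernels, using the identity above.

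\textbf{Main obstacle.} The hardest step is establishing the flag-adapted Weitzenböck decomposition of $Q$ with exact constants. In particular, one must verify that the cross terms between adjacent graded pieces either vanish or are absorbed, so that the negative/nonnegative splittings are genuinely orthogonal for $Q$. I would first do this in the moving-frame formalism on the isotropic flag bundle, computing the structure equations of $\mathrm{SO}(2n+1)/\U(1)^n$ and pairing them against sections. I would treat $n=2$ as a test case, checking against known results for the Veronese/Borůvka surfaces.
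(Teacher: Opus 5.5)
There is a genuine gap. Your plan has no mechanism that produces negative directions beyond Ejiri's. The factorization $I^\C_f(V,V)=4\lVert\delbar^\perp V\rVert^2-2\lVert V\rVert^2$ holds on $\mathcal N\cong\mathcal P_2^*$, whose graded pieces are $\mathcal G_2,\dots,\mathcal G_n$. (Your $N^{1,0}$ with quotients $\mathcal G_k^*$ is $\mathcal P_2=\overline{\mathcal N}$, on which $\del$ and $\delbar$ trade roles.) By itself this factorization yields only the negative space $H^0(\mathcal N)$. For $g=0$ that gives $\operatorname{Area}(f)/\pi+2(n-3)-2b_1$, which is strictly weaker than the stated bound for every $n\ge3$.

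The graded pieces do not decouple. The operator $\delbar^\perp=\overline{\partial_{\mathcal G}}-U$ couples them through the $\Phi_r^\dagger$, and the zeroth-order terms are the functions $\lvert\Phi_r\rvert^2$, not constants tied to degrees. These couplings are what carry the extra negative directions, so they cannot simply be ``absorbed''.

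The missing idea is the chain of auxiliary forms $q_k$ on the bundles $\mathcal E_k=\mathcal G_k\otimes\mathcal Q_{k+1}$. For $k\ge1$ these are not subquotients of $\mathcal N$; they are pieces of the vertical normal bundle $\Lambda^2\mathcal Q_k$ of the partial twistor lift $\Psi_k$. The chain starts at $q_0=\tfrac14 I^\C_f$ and ends at $q_{n-1}(W)=\lVert\delbar_{\mathcal E_{n-1}}W\rVert^2\ge0$. Consecutive forms are linked by the completed-square identity (\ref{eq:qkdiffid}), which rests on $(\delbar_{\mathcal E_k})^*P^\dagger_{k+1}=-P^*_{k+1}\del_{\mathcal E_{k+1}}$. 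A negative direction $Z$ of $q_{k+1}$ yields one of $q_k$ by solving $\delbar_{\mathcal E_k}W=P^\dagger_{k+1}Z$, with obstruction in $H^1(\mathcal E_k)$. Further negative directions come from $H^0(\mathcal E_k)$ modulo $\ker P_{k+1}$. Iterating down to $k=0$ gives negative variations that are not holomorphic sections of $\mathcal N$.

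Your specific candidates are also the wrong ones. Holomorphic sections of $\mathcal G_k\otimes\mathcal G_{k+1}\subset\mathcal E_k$ are exactly the holomorphic sections of $\mathcal E_k$ annihilated by $P_{k+1}$, so $q_k$ vanishes on them. They are null directions (what Theorem \ref{thm:nullitybound} counts), and they are \emph{subtracted} in the index count.

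The negative directions come from non-adjacent products. The quantity $\chi(\mathcal N)+\sum_{k=1}^{n-2}\chi(\mathcal G_k\otimes\mathcal Q_{k+2})$ is the sum of $\deg(\mathcal G_k\otimes\mathcal G_r)+1-g$ over $0\le k\le n-2$, $k+2\le r\le n$, with $\mathcal G_0$ trivial. For example, for $n=3$ the complex bound is $\chi(\mathcal N)+\chi(\mathcal G_1\otimes\mathcal G_3)-h^1(\mathcal G_1\otimes\mathcal G_2)$, which involves $\mathcal G_1\otimes\mathcal G_3$, not $\mathcal G_1\otimes\mathcal G_2$. This quadratic number of terms produces $(n-2)\sum_r a_r+a_n$, with $\sum_r a_r=\operatorname{Area}(f)/2\pi$. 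Your $\sum_{k=1}^{n-2}(a_k+a_{k+1})+a_n$ cannot be rewritten that way; compare the coefficients of $a_n$ (and of $a_1$ for $n\ge4$).

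The term $-h^1(\mathcal G_k\otimes\mathcal G_{k+1})$ whose sign puzzled you arises as
$h^0(\mathcal E_k)-h^0(\mathcal G_k\otimes\mathcal G_{k+1})-h^1(\mathcal E_k)=\chi(\mathcal G_k\otimes\mathcal Q_{k+2})-h^1(\mathcal G_k\otimes\mathcal G_{k+1})$.
That is, new holomorphic directions not killed by $P_{k+1}$, minus the obstruction to lifting.

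For the upper bound your numerology is right: the paper proves $\operatorname{Ind}_\C(q_0)\le\sum_{k=1}^{n-1}h^0(\mathcal K\otimes\mathcal E_k)$, and $h^0=\chi+h^1$. However, your Serre duality step is wrong, since $H^1(\mathcal E)^*\cong H^0(\mathcal K\otimes\mathcal E^*)$, not $H^1(\mathcal K\otimes\mathcal E)$. Your claim that $Q\ge0$ on some complement also has nothing behind it. In the paper, $H^0(\mathcal K\otimes\mathcal E_{k+1})$ is the obstruction to solving $\del_{\mathcal E_{k+1}}Z=-P_{k+1}W$. Then (\ref{eq:qkdiffid}) shows that a negative subspace of $q_k$, cut down by these conditions, injects into a negative subspace of $q_{k+1}$, and the chain terminates at $q_{n-1}\ge0$.
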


\begin{theorem}\label{thm:introgeneralnull}
    Let $f: \Sigma \to S^{2n}$ be a compact, linearly full, totally isotropic branched immersion, where $\Sigma$ has genus $g.$ Set
    \begin{equation*}
        \operatorname{N}(f) = \frac{\operatorname{Area}(f)}{\pi} + 2(n^2-3)(1-g) -2b_1 + 2 h^1(\mathcal{G}_{n-1} \otimes \mathcal{G}_n).
    \end{equation*}
    Then
    \begin{equation*}
        \operatorname{N}(f) \leq \operatorname{Null}(f) \leq \operatorname{N}(f) + 4 \operatorname{R}(f) + 4 \sum_{k=1}^{n-2} h^1(\mathcal{G}_k \otimes \mathcal{G}_{k+1}).
    \end{equation*}
\end{theorem}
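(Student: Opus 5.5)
The nullity bound will come from a holomorphic decomposition of the Jacobi operator, in the spirit of the index bounds of Theorem~\ref{thm:introgeneralindex}. Since $f$ is totally isotropic, the complexified normal bundle $N_\C$ splits along the osculating flag. It decomposes into line bundles $L_j$ (and their conjugates) for $j=2,\dots,n$, where $L_j$ is identified with $\mathcal{G}_{j-1}^*$-type quotients of the flag $\mathcal{P}_\bullet$, and each $L_j$ carries its induced Chern connection. Following Ejiri and Montiel--Urbano, we write the Jacobi operator as $J = -\Delta^\perp - 2 - \tilde{A}$. In the isotropic frame it becomes a first-order-squared operator: $J$ equals a positive multiple of $\delbar^*\delbar$ on suitable pieces, plus zeroth-order coupling terms between adjacent pieces $L_k, L_{k+1}$. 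These coupling terms come from the second fundamental form of the flag, that is, from the holomorphic maps $\Psi_k\to\Psi_{k+1}$.

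\textbf{Lower bound.} We produce Jacobi fields explicitly. First we show that every holomorphic section of an appropriate twisted bundle, built as an extension of $\mathcal{G}_{n-1}\otimes\mathcal{G}_n$-type and tangent-to-twistor-fibre data, gives a Jacobi field. Concretely, these are the variations through totally isotropic immersions: deformations of the horizontal holomorphic curve $\widehat{\Psi}$ in the twistor space $\mathrm{Fl}$, together with the variations coming from $\SO(2n+1)$ Killing fields. Then we count them with Riemann--Roch,
\[
h^0 - h^1 = \deg + (\text{rank})(1-g).
\]
The degrees are expressed through $\operatorname{Area}(f)/\pi$, which is the degree of $\mathcal{G}_1$ shifted by $b_1$ and the Plücker-type relations among the $\deg\mathcal{G}_j$. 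This yields $\operatorname{N}(f)$. The term $2h^1(\mathcal{G}_{n-1}\otimes\mathcal{G}_n)$ appears because the relevant space is $h^0 = \chi + h^1$. The factor $2$ accounts for real versus complex dimension, and the $-2b_1$ comes from the branch points reducing the degree of the normal pieces.

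\textbf{Upper bound.} We bound $\ker J$ by a filtration argument. Write a Jacobi field $V = \sum_j (V_j + \overline{V_j})$ with $V_j \in \Gamma(L_j)$ and use the triangular structure of $J$. Starting from the top piece, the Jacobi equation forces $\delbar$-type equations whose components are determined, up to holomorphic data, by the components below. By induction down the flag, $\dim\ker J$ is at most the sum of $h^0$ of successive holomorphic line bundles. Each $h^0$ is computed by Riemann--Roch, giving the $\chi$-terms that reproduce $\operatorname{N}(f)$ plus error terms $h^1(\mathcal{G}_k\otimes\mathcal{G}_{k+1})$ for $k\le n-2$. The doubling to $4$ occurs because each step contributes both a holomorphic and an antiholomorphic piece. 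Ramification of $\Psi_j$ inflates the degree of the bundle used at stage $j$ by $b_j$, and this happens for each of the $n-j$ later stages. Summing $(n-j)b_j$ gives $\operatorname{R}(f)$, hence the $4\operatorname{R}(f)$ term.

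The main obstacle is the upper bound: making the triangular reduction precise at branch and ramification points, where the line subbundles $L_j$ extend only as saturations. The plan there is to work with the holomorphic extensions $\mathcal{G}_j$ and control the pole and zero orders of the connecting maps by the $b_j$. Bookkeeping the exact constant $2(n^2-3)(1-g)$ is routine once the degree identities $\sum \deg\mathcal{G}_j$ versus area are recorded.
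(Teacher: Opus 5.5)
Your proposal has a genuine gap: the structure you induct on does not exist. By Proposition~\ref{prop:secondvasrfactor}, on $\mathcal{N}\simeq\mathcal{G}_2\oplus\cdots\oplus\mathcal{G}_n$ the complexified Jacobi operator is $4(\overline{\partial^\perp})^*\overline{\partial^\perp}-2$ with $\overline{\partial^\perp}=\overline{\partial_{\mathcal G}}-U$. It is self-adjoint and tridiagonal, so it cannot be triangular in any nontrivial sense. Its off-diagonal entries, coming from $(\overline{\partial_{\mathcal G}})^*U$ and $U^*\overline{\partial_{\mathcal G}}$, are first order, not zeroth order. Because of the $-2$, nothing in the Jacobi equation isolates a first-order $\delbar$-equation on a single piece.

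So your upper-bound step (``starting from the top piece, the Jacobi equation forces $\delbar$-type equations determined, up to holomorphic data, by the components below'') has no justification. There is also no reason for $\ker J$ to be bounded by $h^0$'s of line bundles built from $\mathcal{N}$. Indeed, the bundles $\mathcal{G}_k\otimes\mathcal{G}_{k+1}$ whose cohomology appears in the statement are not subquotients of $\mathcal{N}$ at all.

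Your lower bound is also only heuristic. The twistor intuition is sound, but infinitesimal deformations of the horizontal curve $\widehat{\Psi}$ are not the holomorphic sections of a single bundle. Horizontality is an extra first-order constraint, so Riemann--Roch gives no lower bound until its cokernel is controlled. Moreover, an index count produces only Euler characteristics. It therefore cannot explain why $\mathcal{G}_{n-1}\otimes\mathcal{G}_n$ contributes its full $h^0$ (the term $2h^1(\mathcal{G}_{n-1}\otimes\mathcal{G}_n)$ in $\operatorname{N}(f)$) while no other level does.

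What is missing is the paper's key device: leave $\mathcal{N}$ and pass to the auxiliary bundles $\mathcal{E}_k=\mathcal{G}_k\otimes\mathcal{Q}_{k+1}$ of rank $n-k$, with $\mathcal{E}_0=\mathcal{N}$. These carry forms $q_k$ with $q_0=\tfrac14 I^\C_f$ and $q_{n-1}(W)=\lVert\delbar_{\mathcal{E}_{n-1}}W\rVert^2$. Consecutive forms are linked by the completed-square identity $q_k(W)-q_{k+1}(Z)=\lVert\delbar_{\mathcal{E}_k}W-P^\dagger_{k+1}Z\rVert^2-\lVert\del_{\mathcal{E}_{k+1}}Z+P_{k+1}W\rVert^2$, which rests on $(\delbar_{\mathcal{E}_k})^*P^\dagger_{k+1}=-P^*_{k+1}\del_{\mathcal{E}_{k+1}}$.

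Solutions of the first-order system $\delbar_{\mathcal{E}_k}W=P^\dagger_{k+1}Z$, $\del_{\mathcal{E}_{k+1}}Z=-P_{k+1}W$ are pairs of $q_k$- and $q_{k+1}$-Jacobi fields. So Jacobi fields can be moved one level up or down by solving a $\delbar$-equation and then a $\del$-equation. The obstructions lie in $H^1(\mathcal{E}_k)/P^\dagger_{k+1}\ker\del_{\mathcal{E}_{k+1}}$ and $H^0(\mathcal{K}\otimes\mathcal{E}_{k+1})/P_{k+1}H^0(\mathcal{E}_k)$, of total dimension $(n-k-1)b_{k+1}+h^1(\mathcal{G}_k\otimes\mathcal{G}_{k+1})$. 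The indeterminacy is $H^0(\mathcal{G}_k\otimes\mathcal{G}_{k+1})$; for $k=0$ these $\mathcal{G}_k\otimes\mathcal{G}_{k+1}$ terms are replaced by $0$.

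Let $\nu_k$ be the $q_k$-nullity and $\sigma_-=\sum_{k=1}^{n-2}h^1(\mathcal{G}_k\otimes\mathcal{G}_{k+1})$. Starting from $\nu_{n-1}=h^0(\mathcal{G}_{n-1}\otimes\mathcal{G}_n)$, the induction yields $\lvert\nu_0-\sum_{k=1}^{n-1}h^0(\mathcal{G}_k\otimes\mathcal{G}_{k+1})\rvert\le\operatorname{R}(f)+\sigma_-$. Riemann--Roch shows the centre of this window equals $\tfrac12\operatorname{N}(f)+\operatorname{R}(f)+\sigma_-$.

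This also corrects your bookkeeping. First, $b_{k+1}$ enters once, at step $k$, weighted by $\operatorname{rk}\mathcal{E}_{k+1}=n-k-1$, not once per later stage. Second, the window runs from exactly $\tfrac12\operatorname{N}(f)$ to $\tfrac12\operatorname{N}(f)+2\operatorname{R}(f)+2\sigma_-$. The coefficient $4$ is the real/complex doubling of this width, not a holomorphic/antiholomorphic doubling.
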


For $\Sigma \cong S^2,$ all the $h^1$ terms vanish for degree reasons. Together with classical degree identities, this gives Theorems \ref{thm:introS2IndexTheorem} and \ref{thm:introS2NullityTheorem} as specializations of Theorems \ref{thm:introgeneralindex} and \ref{thm:introgeneralnull}. In fact, for $\Sigma \cong S^2,$ when the first $n-1$ holomorphic curves $\Psi_1, \ldots, \Psi_{n-1}$ are unbranched, so that $\operatorname{R}(f) = 0,$ our bounds produce exact equalities for both the index and nullity:
\begin{equation*}
    \operatorname{Ind}(f) = (n-1)\left(\frac{\operatorname{Area}(f)}{\pi}-n\right), \quad \operatorname{Null}(f) = \frac{\operatorname{Area}(f)}{\pi}+2(n^2-3).
\end{equation*}

In the case where $\Sigma$ has genus $g=1,$ cohomology calculations and degree bounds give, for $n \geq 3,$
\begin{equation*}
\begin{aligned}
    & 2(n-1)(n^2+1) + 2 \left\lfloor \frac{n}{2} \right\rfloor \leq \operatorname{Ind}(f) \leq (n-1)\frac{\operatorname{Area}(f)}{\pi},
\end{aligned} 
\end{equation*}
as well as a refined nullity bound (see \S\ref{ssect:genusone}).

The idea behind the proofs of Theorems \ref{thm:introgeneralindex} and \ref{thm:introgeneralnull} is to realize the (complexified) second variation of $f$ as the first term $q_0$ in a sequence of quadratic forms $q_0, \ldots, q_{n-1}$ defined on the spaces of sections of a sequence of holomorphic bundles $\mathcal{E}_0, \ldots, \mathcal{E}_{n-1}$ over $\Sigma.$ This sequence of quadratic forms has the property that its final member $q_{n-1}$ is non-negative and that consecutive forms $q_k$ and $q_{k+1}$ in the sequence satisfy an identity
\begin{equation*}
    q_k(W)-q_{k+1}(Z) =	\left\lVert	\overline{\partial}_{\mathcal E_k}W-P^\dagger_{k+1}Z \right\rVert^2 - \left\lVert \partial_{\mathcal E_{k+1}}Z+P_{k+1}W \right\rVert^2,
\end{equation*}
where the $P_{k}$ are certain zeroth-order operators derived from the geometry of $\Sigma.$ This identity allows one to move negative-definite subspaces up and down the sequence of bundles $\mathcal{E}_j,$ provided certain $\del$- or $\delbar$-equations can be satisfied: solving a $\delbar$-equation removes the positive term and transfers negative directions from $q_{k+1}$ to $q_k,$ giving a lower bound on the index; while solving a $\del$-equation does the reverse, giving an upper bound on the index. The obstructions to solving these equations are naturally cohomology spaces. Similar considerations allow us to give upper and lower bounds on the nullity.

\subsection{Context}

\indent \indent The study of the second variation of minimal submanifolds has a long history. In the fundamental work of Simons \cite{Simons68}, ambient vector fields from $\R^{N+1}$ are used to prove instability of minimal submanifolds of $S^N.$ For minimal $2$-spheres in $S^N,$ Ejiri \cite{Ejiri83} identifies the lowest eigenspace of the Jacobi operator with the space of holomorphic sections of the normal bundle, proving the lower bound
\begin{equation*}
    \operatorname{Ind} (f) \geq \frac{\operatorname{Area}(f)}{\pi} + 2(n-3) 
\end{equation*}
in the linearly full unbranched case. The argument for our lower bound begins with this observation and continues along the sequence $q_k$ to construct additional negative directions.

In the case $n=2,$ Montiel--Urbano \cite{MontUrb97} give an exact formula for the index of a compact totally isotropic immersion $f: \Sigma \to S^4,$ namely
\begin{equation*}
    \operatorname{Ind} (f) = \frac{\operatorname{Area}(f)}{\pi} + 2g-2.
\end{equation*}
They also obtain bounds on the nullity. Our general bounds coincide when $n=2$ and $f$ is unbranched, and indeed our work is inspired by the arguments in \cite{MontUrb97}.

In the case $n=3,$ the second author \cite{Madnick22} studies null-torsion holomorphic curves in the nearly K\"ahler $S^6,$ computing the multiplicity of the lowest Jacobi eigenvalue under a genus--area hypothesis and obtaining a lower bound for the nullity in all genera.

Karpukhin \cite{Karpukhin21} gives a lower bound for the index of a linearly full branched minimal $2$-sphere $f: S^2 \to S^{2n},$
\begin{equation*}
    \operatorname{Ind}(f) \geq (n-1)\left( \frac{\operatorname{Area}(f)}{\pi} + 4 - 2 \left\lfloor \sqrt{2\frac{\operatorname{Area}(f)}{\pi} + 1} \right\rfloor_{\mathrm{odd}}  \right),
\end{equation*}
where $\lfloor x \rfloor_{\mathrm{odd}}$ denotes the largest odd integer not exceeding $x.$ For fixed $n,$ Karpukhin's lower bound is more restrictive than the lower bound of Theorem \ref{thm:introS2IndexTheorem} for sufficiently large values of the area, while the lower bound of Theorem \ref{thm:introS2IndexTheorem} is sharp at the minimum area and our ramification-sensitive bounds can give an exact index when there is no intermediate ramification. Combining Karpukhin's lower bound and the upper bound of Theorem \ref{thm:introS2IndexTheorem} gives a good understanding of the asymptotics of the index along a sequence of branched totally isotropic spheres with increasing area. Karpukhin also proves that the index of a linearly full minimal immersion $f: \mathbb{RP}^2 \to S^{2n}$ equals half that of the corresponding lift $\widehat{f} : S^2 \to S^{2n}.$ This observation can be combined with our Theorem \ref{thm:introS2IndexTheorem} to obtain new bounds on the index for linearly full minimal immersions $\mathbb{RP}^2 \to S^{2n}.$

Kusner--Wang \cite{KusWang24} give index lower bounds for minimal $2$-tori in round spheres. Their results include minimal tori in $S^4$ with non-vanishing Hopf differential and minimal tori in higher codimension with vanishing Hopf differential. The latter condition is weaker than total isotropy. Their estimates therefore concern a wider class of tori compared to those under consideration here. The more restrictive hypotheses of this article allow us to make use of holomorphic methods to prove stronger bounds. 

The equality statement of Theorem \ref{thm:introS2IndexTheorem} answers in the affirmative a question raised by the authors in \cite{BallMadnick26}. In addition, the results of this paper can be used to give alternative derivations of the formulas in \cite{BallMadnick26} for the index and nullity of the Bor\r{u}vka immersions.

Our nullity bounds can be compared with the moduli space theory for harmonic maps $\Sigma \to S^{2n}.$ Recall that for two-dimensional domains branched minimal immersions are equivalent to weakly conformal harmonic maps. Fern\'andez \cite{Fernandez12} shows that the space of linearly full harmonic maps $S^2 \to S^{2n}$ with fixed \emph{harmonic degree} $d = \operatorname{Area}(f)/4\pi$ has pure complex dimension $2d+n^2.$ For unbranched immersions, subtracting the six real dimensions of conformal reparametrizations gives a space of deformations of real dimension $4d+2n^2-6,$ which agrees with the lower bound from Theorem \ref{thm:introS2NullityTheorem}. Lemaire--Wood \cite{LemaireWood09} study the space of Jacobi fields along harmonic 2-spheres in $S^3$ and $S^4,$ proving by means of twistor theory that  Jacobi fields are not always integrable in this setting. 

In a very recent preprint, Caniato \cite{Caniato26} constructs families of linearly full, totally isotropic branched immersions of any fixed compact Riemann surface into $S^{2n}$, of complex dimension $2d+n^2(1-g)$, for an unbounded sequence of harmonic degrees $d=\operatorname{Area}(f)/4\pi$. These families arise near certain branched covers of the Bor\r{u}vka immersion, and the normal components of the infinitesimal deformations through these families span a space of real dimension $4d+2(n^2-3)(1-g)-2b_1,$ which agrees with our lower nullity bound of Theorem \ref{thm:introgeneralnull} applied to these examples. Thus, in Caniato's examples the dimension in our lower nullity bound is realized by integrable Jacobi fields. Our lower bound also provides evidence for Conjecture 1.2 of \cite{Caniato26}, while our upper bound limits the possible additional nullity.

\subsection{Organization}

\indent \indent The first three sections after the introduction of this article (\S\ref{sect:thesphere}-\S\ref{sect:secondvar}) are primarily expository. In \S\ref{sect:thesphere}, we set up the moving frame on $S^{2n}$ and describe the geometry of some related homogeneous spaces: the partial twistor and flag manifolds. In \S\ref{sect:totallyisotropic}, we define total isotropy for branched immersions $f: \Sigma \to S^{2n}$ and set up their structure equations. We also set up the holomorphic apparatus used to prove our results: we define certain holomorphic bundles over the domain $\Sigma$ of a totally isotropic branched immersion and study their degree and curvature. In \S\ref{sect:secondvar}, we look at the second variation formula in our context and reprove a result of Ejiri as motivation for our later calculation.

The heart of the paper is \S\ref{sect:bounds}. Here, we define the sequence $q_k$ of quadratic forms and prove the fundamental identity (\ref{eq:qkdiffid}) relating $q_k$ and $q_{k+1}$ via an integral Weitzenb\"ock formula. Remark \ref{rmk:twistorinterpret} gives a twistor interpretation of this identity. Then, in the remaining part of \S\ref{sect:bounds}, we use (\ref{eq:qkdiffid}) to prove our general index and nullity bounds. In \S\ref{sect:lowgenus}, we apply the general results to the low genus cases $g = 0$ and $g=1.$

\subsection*{AI use disclosure}

The authors used AI (LLMs) in the preparation of this article, primarily to proofread drafts and as a component of the process of checking arguments for correctness. The words in this article are our own. The strategy used to prove the main results is due to the authors. The only noteworthy mathematical contribution of AI is the suggestion, in the course of a ``chat'' with the first author, that in the case of the Bor\r{u}vka immersions $f : S^2 \to S^{2n}$ the negative variations constructed in \cite{BallMadnick26} correspond to sections of the bundles called $\mathcal{E}_k$ in \S\ref{sect:bounds}.
	
\section{The sphere $S^{2n}$}\label{sect:thesphere}
	
\subsection{The moving frame}
	
\indent \indent Let $\R^{2n+1}$ be Euclidean space with inner product $\langle \cdot, \cdot \rangle.$ We regard the unit sphere $S^{2n}$ as the homogeneous space
\begin{equation*}
	S^{2n} = \SO(2n+1) / \SO(2n).
\end{equation*}
Writing an element of $\SO(2n+1)$ as an oriented orthonormal frame $g = (\mathbf{x}, \mathbf{e}_1, \ldots, \mathbf{e}_{2n}),$ the quotient map $\SO(2n+1) \to S^{2n}$ is projection $g \mapsto \mathbf{x}.$ Let
\begin{equation*}
	\omega = g^{-1} d g \in \Omega^1(\SO(2n+1);\mathfrak{so}(2n+1))
\end{equation*}
be the left-invariant Maurer-Cartan form of $\SO(2n+1)$, and set $\theta_{\alpha} = \omega_{\alpha 0}$ for $1 \leq \alpha \leq 2n,$ so that $dg = g\omega$ gives
\begin{equation}\label{eq:frameeqsR}
	\begin{aligned}
		d \mathbf{x}          & = \theta_{\alpha} \mathbf{e}_{\alpha},                                   \\
		d \mathbf{e}_{\alpha} & = - \theta_{\alpha} \mathbf{x} + \omega_{\beta \alpha} \mathbf{e}_\beta. 
	\end{aligned}
\end{equation}
The Maurer-Cartan equation $d \omega + \omega \wedge \omega = 0$ becomes the structure equations
\begin{equation}\label{eq:structeqsR}
	\begin{aligned}
		d \theta_{\alpha}       & = -\omega_{\alpha \beta} \wedge \theta_{\beta},                                                  \\
		d \omega_{\alpha \beta} & = - \omega_{\alpha \gamma} \wedge \omega_{\gamma \beta} + \theta_{\alpha} \wedge \theta_{\beta}. 
	\end{aligned}
\end{equation}
	
We next rewrite these equations in complex notation. For an index $1 \leq a \leq n,$ set $p_a = 2a-1, q_a = 2a$ and define
\begin{equation*}
	\begin{aligned}
		\mathbf{f}_{\overline a} & = \mathbf{e}_{p_a} - i \mathbf{e}_{q_a}, \quad \mathbf{f}_a = \overline{\mathbf{f}_{\overline{a}}}, \\
		\eta_a                   & = \theta_{p_a} + i \theta_{q_a},                                                                    
	\end{aligned}
\end{equation*}
and
\begin{equation*}
	\begin{aligned}
		\kappa_{a \overline{b}} & = \tfrac{1}{2} ( \omega_{p_a p_b} + \omega_{q_a q_b} + i \omega_{q_a p_b} - i \omega_{p_a q_b}), \\
		\sigma_{ab}             & = \tfrac{1}{2} ( \omega_{p_a p_b} - \omega_{q_a q_b} + i \omega_{q_a p_b} + i \omega_{p_a q_b}). 
	\end{aligned}		
\end{equation*}
These forms satisfy $\kappa_{a \overline{b}} = - \overline{\kappa_{b \overline{a}}}$ and $\sigma_{ab} = -\sigma_{ba}.$ If $\langle \cdot, \cdot \rangle_\C$ denotes the $\C$-bilinear extension of $\langle \cdot, \cdot \rangle,$ then we have
\begin{equation*}
	\langle \mathbf{f}_a, \mathbf{f}_b \rangle_\C = 0, \quad \langle \mathbf{f}_a, \mathbf{f}_{\overline{b}} \rangle_\C = 2 \delta_{ab}.
\end{equation*}
The Lie algebra $\mathfrak{so}(2n+1)$ decomposes under the subgroup $\U(n)$ as $\C^n \oplus \mathfrak{u}(n) \oplus \Lambda^2_\C \C^n,$ and the splitting $\omega = \eta + \kappa + \sigma$ respects this decomposition. In this notation, (\ref{eq:frameeqsR}) becomes
\begin{equation}\label{eq:frameeqsC}
	\begin{aligned}
		d \mathbf{x}                & = \tfrac{1}{2} \mathbf{f}_{\overline{a}} \eta_a + \tfrac{1}{2} \mathbf{f}_a \overline{\eta_a},                              \\
		d \mathbf{f}_{\overline{a}} & = - \overline{\eta_a} \mathbf{x} + \mathbf{f}_{\overline{b}} \kappa_{b \overline{a}} - \mathbf{f}_b \overline{\sigma_{ab}}, \\
		d \mathbf{f}_a              & = - \eta_a \mathbf{x} - \mathbf{f}_b \kappa_{a \overline{b}} - \mathbf{f}_{\overline{b}} \sigma_{ab},                       
	\end{aligned}
\end{equation}
while the structure equations (\ref{eq:structeqsR}) become
\begin{equation}\label{eq:structeqsC}
	\begin{aligned}
		d \eta_a                  & = -\kappa_{a \overline{b}} \wedge \eta_b - \sigma_{ab} \wedge \overline{\eta_b},                                                                       \\
		d \kappa_{a \overline{b}} & = - \kappa_{a \overline{c}} \wedge \kappa_{c \overline{b}} - \sigma_{ac} \wedge \overline{\sigma_{cb}} + \tfrac{1}{2} \eta_a \wedge \overline{\eta_b}, \\
		d \sigma_{ab}             & = - \kappa_{a \overline{c}} \wedge \sigma_{cb} + \kappa_{b \overline{c}} \wedge \sigma_{ca} + \tfrac{1}{2} \eta_a \wedge \eta_b.                       
	\end{aligned}
\end{equation}
	
\subsection{Related homogeneous spaces}

In our work, we shall study the geometry of surfaces in $S^{2n}$ using the geometry of several homogeneous spaces of $\SO(2n+1).$ In this section we describe these spaces and their natural $\SO(2n+1)$-invariant geometric structures.

\subsubsection{Isotropic planes}

\indent \indent As above, let $\langle \cdot, \cdot \rangle_\C$ denote the $\C$-linear extension to $\C^{2n+1}$ of the standard inner product $\langle \cdot, \cdot \rangle$ on $\R^{2n+1}.$ The complex Lie group $\SO(2n+1, \C)$ acts on $\C^{2n+1}$ preserving $\langle \cdot, \cdot \rangle_\C.$ In this subsection, we fix an oriented orthonormal basis $\mathbf{x}, \mathbf{e}_1, \ldots, \mathbf{e}_{2n}$ and define the $\mathbf{f}_a$ as above.

\begin{definition}
	An $m$-dimensional complex subspace $S < \C^{2n+1}$ is called \emph{isotropic} if the restriction of $\langle \cdot, \cdot \rangle_\C$ to $S$ is identically zero. A $k$-dimensional complex subspace $U < \C^{2n+1}$ is called \emph{nondegenerate} if the restriction of $\langle \cdot, \cdot \rangle_\C$ to $U$ is nondegenerate.
\end{definition}

\indent The action of $\SO(2n+1, \C)$ on $\C^{2n+1}$ is transitive on isotropic $m$-planes, so every isotropic $m$-plane lies in the $\SO(2n+1, \C)$-orbit of the standard plane
\begin{equation*}
	S_m = \mathrm{span}_\C(\mathbf{f}_{n-m+1}, \ldots, \mathbf{f}_n).
\end{equation*}
The $\SO(2n+1, \C)$-stabilizer of $S_m$ is a parabolic subgroup $P_m < \SO(2n+1, \C).$ The action of $P_m$ on $\C^{2n+1}$ preserves the subspace
\begin{equation*}
	S_m^\perp = \left\lbrace u \in \C^{2n+1} \mid \langle u, S_m \rangle_\C = 0 \right\rbrace = \mathrm{span}_\C(\mathbf{x}, \mathbf{e}_1, \ldots, \mathbf{e}_{2n-2m}, \mathbf{f}_{n-m+1}, \ldots, \mathbf{f}_n)
\end{equation*}
and we have $S_m < S_m^{\perp}.$ The space $S_m^{\perp}/S_m$ inherits a nondegenerate bilinear form from $\langle \cdot, \cdot \rangle_\C.$ We have a chain of subspaces $S_m < S_m^{\perp} < \C^{2n+1}.$ In particular, there are $P_m$-equivariant short exact sequences of vector spaces
\begin{equation*}
	0 \to S_m \to S_m^\perp \to S_m^{\perp}/S_m \to 0,
\end{equation*}
and
\begin{equation*}
	0 \to S_m^{\perp} \to \C^{2n+1} \to S_m^* \to 0
\end{equation*}
where the map $\C^{2n+1} \to S_m^*$ in the second sequence sends $v \in \C^{2n+1}$ to the linear functional $s \mapsto \langle v, s \rangle_\C.$

The maximal compact subgroup $\SO(2n+1) < \SO(2n+1,\C)$ also acts transitively on the space of isotropic $m$-planes, with stabilizer $\SO(2n+1-2m) \times \U(m).$ Since $\SO(2n+1)$ preserves the standard real structure on $\C^{2n+1},$ this stabilizer group also preserves the subspace $\overline{S}_m,$ and hence the subspace $U_m = \left( S_m \oplus \overline{S_m} \right)^\perp = \mathrm{span}_\C(\mathbf{x}, \mathbf{e}_1, \ldots, \mathbf{e}_{2n-2m})$. This gives a $\SO(2n+1-2m) \times \U(m)$-invariant decomposition
\begin{equation}\label{eq:csplitcompact}
	\C^{2n+1} = \underbrace{S_m \oplus U_m}_{S_m^\perp} \oplus \overline{S_m},
\end{equation}
where $U_m$ is nondegenerate. We note that the decomposition (\ref{eq:csplitcompact}) is not invariant under the larger group $P_m.$
    
\subsubsection{The partial twistor spaces}
    
\begin{definition}
	For $1 \leq m \leq n$ the \emph{partial twistor space} $Z_m$ is defined to be the Grassmannian of isotropic $m$-planes in $\C^{2n+1}$,
	\begin{equation*}
		Z_m = \mathrm{IGr}(m, \C^{2n+1}) = \{E \in \mathrm{Gr}(m, \C^{2n+1}) \colon E \text{ is isotropic}\}.
	\end{equation*}
\end{definition}
As noted in the previous subsection, $Z_m$ carries transitive actions of the group $\SO_(2n+1, \C)$ and its maximal compact subgroup $\SO(2n+1),$ giving realizations $Z_m \cong \SO(2n+1, \C)/P_m \cong \SO(2n+1)/(\SO(2n+1-2m) \times \U(m)).$ In both cases the quotient map is given by
\begin{equation*}
	g \mapsto S_{m}(g) = \mathrm{span}_\C(\mathbf{f}_{n-m+1}, \ldots, \mathbf{f}_n).
\end{equation*}
The space $Z_m$ may be embedded in the projective space $\mathbb{P}(\Lambda^m \C^{2n+1})$ via the Pl\"ucker embedding. In terms of an adapted frame, this is the map
\begin{equation*}
    \mathrm{span}_\C(\mathbf{f}_{n-m+1}, \ldots, \mathbf{f}_n) \mapsto [\mathbf{f}_{n-m+1} \wedge \cdots \wedge \mathbf{f}_n].
\end{equation*}

We first describe the geometric structures on $Z_m$ invariant under the larger group $\SO_(2n+1,\C).$ The definition of $Z_m$ in terms of complex subspaces satisfying a set of algebraic equations naturally gives $Z_m$ the structure of a complex manifold. Let $\mathcal{S}_m \to Z_m$ be the tautological isotropic $m$-plane bundle over $Z_m.$ Repeating the linear algebra of the previous subsection gives short exact sequences of vector bundles over $Z_m,$
\begin{equation*}
	\begin{aligned}
		  & 0 \to \mathcal{S}_m \to \mathcal{S}_m^{\perp} \to \mathcal{S}_m^{\perp} / \mathcal{S}_m \to 0, \\
		  & 0 \to \mathcal{S}_m^{\perp} \to \underline{\C}^{2n+1} \to \mathcal{S}_m^* \to 0.               
	\end{aligned}
\end{equation*}
The tangent space to the ordinary Grassmannian $\mathrm{Gr}(m,\C^{2n+1})$ at an $m$-plane $E$ is naturally identified with $\mathrm{Hom}_\C(E, \C^{2n+1}/E).$ Differentiating the condition that $S \in Z_m$ be isotropic gives
\begin{equation*}
	T^{1,0}_S Z_m =\left\{A\in\operatorname{Hom}_\C(S,\C^{2n+1}/S) \mid \text{the map} \:\: (u,v) \mapsto \langle u, Av \rangle_\C \:\: \text{is skew-symmetric} \right\},
\end{equation*}
where the map $(u,v) \mapsto \langle u, Av \rangle_\C$ is well-defined by isotropy of $S.$ Therefore, there is an $\SO(2n+1,\C)$-equivariant holomorphic exact sequence
\begin{equation}\label{eq:tangentsesZm}
	0 \longrightarrow \mathcal{S}_m^*\otimes (\mathcal{S}_m^{\perp} / \mathcal{S}_m) \longrightarrow T^{1,0}Z_m \xrightarrow{\theta_m} \Lambda^2 \mathcal{S}_m^* \longrightarrow 0.
\end{equation}

\begin{definition}
	The \emph{horizontal distribution} on $Z_m$ is the holomorphic distribution
	\begin{equation*}
		\mathcal H_m^{1,0} = \ker\theta_m.
	\end{equation*}
	Thus $\mathcal H_m^{1,0} \cong \mathcal S_m^* \otimes (\mathcal{S}_m^{\perp} / \mathcal{S}_m)$ and $T^{1,0} Z_m /\mathcal{H}_m^{1,0} \cong \Lambda^2\mathcal S_m^*$. Write $\mathcal H_m\subset TZ_m$ for the corresponding real distribution.
\end{definition}

We now consider the geometric structures on $Z_m$ associated to the action of the compact group $\SO(2n+1)$. By the splitting (\ref{eq:csplitcompact}), an isotropic $m$-plane $S\subset\C^{2n+1}$ determines a real oriented $2m$-plane
\begin{equation*}
	V_S=(S \oplus \overline{S})\cap\R^{2n+1} = \mathrm{span}_\R(\mathbf{e}_{2n-2m + 1}, \ldots, \mathbf{e}_{2n})
\end{equation*}
together with an orientation-compatible orthogonal complex structure $J_S$ on $V_S$, characterized by the condition that $S$ be its $-i$-eigenspace. Conversely, such a pair $(V,J)$ determines an isotropic $m$-plane by taking the $-i$-eigenspace of $J$ on $V \otimes \C.$ Hence there is an $\SO(2n+1)$-equivariant fibration of $Z_m$ over the oriented Grassmannian
\begin{equation}\label{eq:partwist}
	\SO(2m)/\U(m) \longrightarrow Z_m \xrightarrow{\pi_m} \mathrm{Gr}^+(2m,\R^{2n+1}), \qquad \pi_m(S)=V_S,
\end{equation}
where the fibre over $V_S$ may be identified with the space of orientation-compatible orthogonal complex structures on $V_S.$ 
    
The tautological real $2m$-plane bundle over $\mathrm{Gr}^+(2m,\R^{2n+1})$ has a connection induced by the flat connection of the trivial bundle $\underline{\R}^{2n+1}$ via orthogonal projection. The resulting horizontal distribution on \eqref{eq:partwist} can be shown to equal $\mathcal{H}_m$ and we obtain an $\SO(2n+1)$-invariant smooth splitting $T^{1,0}Z_m = \mathcal{H}_m^{1,0} \oplus (\ker d\pi_m)^{1,0}$ satisfying
\begin{equation*}
	\mathcal{H}_m^{1,0} \cong \mathcal{S}_m^* \otimes (\mathcal{S}_m^{\perp} / \mathcal{S}_m), \qquad (\ker d\pi_m)^{1,0} \cong \Lambda^2\mathcal S_m^*.
\end{equation*}
In general, this smooth splitting is not a holomorphic splitting of \eqref{eq:tangentsesZm}.

This construction may be made more concrete using the structure equations. Let $r=n-m+1$ and fix indices $1 \leq i,j \leq r-1,$ $r \leq a, b, c \leq n.$ Then the forms $\kappa_{a \overline{b}},$ $\eta_i,$ $\kappa_{i \overline{j}},$ $\sigma_{ij}$ are connection forms for the projection $\SO(2n+1) \to Z_m$, while the forms $\eta_a,$ $\kappa_{a \overline{i}},$ $\sigma_{ai}$ and $\sigma_{ab}$ are semi-basic. The complex structure on $Z_m$ is equal to the one defined by declaring this latter set of forms to have type $(1,0)$. We have that $\mathcal{H}^{1,0}_m \subset T^{1,0} Z_m$ is given by
\begin{equation*}
	\mathcal{H}^{1,0}_m = \ker(\lbrace \sigma_{ab} \rbrace_{r \leq a < b \leq n}).
\end{equation*}
    
The two extreme values of $m$ are worth highlighting. When $m=1$, the fibre $\SO(2)/\U(1)$ is a point, so $Z_1 \subset \mathbb{CP}^{2n}$ is the null quadric $Q^{2n-1} \cong \mathrm{Gr}^+(2,\R^{2n+1})$.

When $m=n$, the space $Z_n = Z$ is known as the (full) \emph{twistor space} of $S^{2n}.$ The oriented Grassmannian $\mathrm{Gr}^+(2n,\R^{2n+1})$ is naturally identified with $S^{2n}$ by taking the orthogonal complement. Under this identification, (\ref{eq:partwist}) becomes the twistor fibration
\begin{equation*}
	\SO(2n)/\U(n) \to Z \to S^{2n}, \quad \pi(S_n(g)) = \mathbf{x}.
\end{equation*}

The full twistor space $Z$ admits an embedding $Z \to \mathbb{P}(\Delta_n),$ where $\Delta_n$ is the complex $2^n$-dimensional spinor representation of the group $\mathrm{Spin}(2n+1,\C)$, as we now explain. This embedding is due to Cartan \cite{Cartan81}. If $S \in Z$ is an $n$-dimensional isotropic plane, then the subspace $\ell_S < \Delta_n$ defined by
\begin{equation*}
    \ell_S = \left\lbrace \varphi\in\Delta_n \mid v \cdot \varphi = 0 \:\: \text{for every} \:\: v\in S \right\rbrace,
\end{equation*}
where $\cdot$ denotes Clifford multiplication, is a complex line. Cartan's embedding is the map
\begin{equation*}
    \mathrm{Ca} : Z \to \mathbb{P}(\Delta_n), \qquad S \mapsto \ell_S.
\end{equation*}
Therefore, $Z$ has two canonical projective embeddings, the Pl\"ucker embedding $\mathrm{Plu}: Z \to \mathbb{P}(\Lambda^n \C^{2n+1}),$ and Cartan's embedding $\mathrm{Ca} : Z \to \mathbb{P}(\Delta_n).$ The relation between these is
\begin{equation}\label{eq:spinorsquarerel}
    (\mathrm{Ca}^*\mathcal{O}_{\mathbb{P}(\Delta_n)}(1))^{\otimes 2} \cong \mathrm{Plu}^* \mathcal{O}_{\mathbb{P}(\Lambda^n \C^{2n+1})}(1),
\end{equation}
see \cite[\S109]{Cartan81} (also \cite{Hano96}). Here, we recall that for a complex vector space $V$, the notation $\mathcal{O}_{\mathbb{P}(V)}(1)$ refers to the hyperplane bundle over $\mathbb{P}(V)$.
    
\subsubsection{The flag manifold}

\indent \indent Let $T^n = \SO(2)^n < \SO(2n)$ be the diagonal maximal torus.

\begin{definition}
	The \emph{flag manifold} of $\SO(2n+1)$ is
	\begin{equation*}
		\mathrm{Fl} = \SO(2n+1)/T^n \cong \lbrace 0 = P_{n+1} \subset P_n \subset \cdots \subset P_1 \subset \C^{2n+1} \mid \dim P_r = n-r+1, \:\: P_1 \: \text{isotropic} \rbrace.
	\end{equation*}
\end{definition}
The quotient map $\SO(2n+1) \to \mathrm{Fl}$ sends $g$ to the flag with constituents $P_r(g) = \mathrm{span}_\C(\mathbf{f}_r, \ldots, \mathbf{f}_n).$ The forms $\kappa_{a \overline{a}}$ are connection forms for $\mathrm{Fl}$, while the forms $\eta_a,$ $\kappa_{a \overline{b}}, \sigma_{ab}$ with $a > b$ are semi-basic. There is an integrable complex structure on $\mathrm{Fl}$ defined by declaring the latter set of forms to have type $(1,0).$

For each $r$ there are holomorphic projections $\mathrm{Fl} \to Z_{n-r+1}$ given by discarding all members of the flag except $P_r.$ In the case of $Z_n = Z,$ composing with the twistor fibration gives a homogeneous fibration
\begin{equation*}
	\SO(2n)/T^n \to \mathrm{Fl} \to S^{2n}.
\end{equation*}
These maps fit into the diagram
\[\begin{tikzcd}
	&& {\mathrm{Fl}} & \\
	{Z_1} & {Z_2} & \cdots & {Z_n} \\
	{\mathrm{Gr}^+(2,\R^{2n+1})} & {\mathrm{Gr}^+(4,\R^{2n+1})} & \cdots & {\mathrm{Gr}^+(2n,\R^{2n+1}) \cong S^{2n}.}
	\arrow[from=1-3, to=2-1]
	\arrow[from=1-3, to=2-2]
	\arrow[from=1-3, to=2-3]
	\arrow[from=1-3, to=2-4]
	\arrow["{\pi_1}"', from=2-1, to=3-1]
	\arrow["{\pi_2}"', from=2-2, to=3-2]
	\arrow[from=2-3, to=3-3]
	\arrow["{\pi_n}", from=2-4, to=3-4]
\end{tikzcd}\]

Let $\mathcal{D}^{1,0}$ be the \emph{superhorizontal} distribution \cite{BursRawn90} defined by
\begin{equation}\label{eq:flagdistribution}
	\mathcal{D}^{1,0} = \ker\left( \lbrace \eta_a \rbrace_{a \geq 2} \cup \lbrace \kappa_{a \overline{b}} \rbrace_{a-b \geq 2} \cup \lbrace \sigma_{ab} \rbrace_{1 \leq a,b \leq n} \right).
\end{equation}
Geometrically, a tangent vector $v = (\dot{P}_n, \ldots, \dot{P}_1) \in T^{1,0} \mathrm{Fl}$ lies in $\mathcal{D}^{1,0}$ if and only if $\dot{P}_j(P_j) \subset P_{j-1}/P_j$ for each $j,$ where we regard $\dot{P}_j$ as a linear map $P_j \to \C^{2n+1}/P_j.$ Equivalently, if $T_r = P_{r} / P_{r+1},$ $\ell = P_1^\perp / P_1,$ then
\begin{equation}\label{eq:superhorizontal}
	\mathcal{D}^{1,0} \cong \left( T_1^* \otimes \ell \right) \oplus \bigoplus_{r=2}^{n} T_r^* \otimes T_{r-1}.
\end{equation}

\section{Totally isotropic surfaces}\label{sect:totallyisotropic}

\indent \indent We now introduce our primary objects of interest. Let $f : \Sigma \to S^{2n}$ be a conformal minimal immersion of a Riemann surface and let $z$ be a locally-defined holomorphic coordinate on $\Sigma.$ Following Calabi \cite{Calabi67}, $f : \Sigma \to S^{2n}$ is called \emph{totally isotropic} if
\begin{equation*}
    \left\langle \frac{\partial^r f}{\partial z^r}, \frac{\partial^s f}{\partial z^s} \right\rangle_\C = 0 \text{ for all } r, s \geq 1.
\end{equation*}
Equivalently, the complex osculating spaces
\begin{equation*}
    (\mathcal O_r)_p = \operatorname{span}_{\C} \left\lbrace         \frac{\partial f}{\partial z}(p), \ldots, \frac{\partial^r f}{\partial z^r}(p) \right\rbrace, \quad r \geq 1,
\end{equation*}
are isotropic for all $p \in \Sigma$. These conditions are easily seen to be independent of the choice of $z.$ A conformal minimal branched immersion is called totally isotropic if it is totally isotropic on its regular locus.

The importance of totally isotropic surfaces is underlined by the following beautiful result of Calabi.

\begin{theorem}[\cite{Calabi67}]\label{thm:calabi}
	Every branched minimal immersion $f: S^2 \to S^{N}$ is totally isotropic in the lowest-dimensional totally geodesic subsphere containing its image.
\end{theorem}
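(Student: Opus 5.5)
The plan is Calabi's holomorphic-differential argument. Let $f: S^2 \to S^N$ be a branched minimal immersion, regarded as a map into $\R^{N+1}$ with $\langle f,f\rangle = 1$. Minimality for a conformal map into the sphere means harmonicity in the form
\begin{equation*}
f_{z\bar z} = -\lambda f, \qquad \lambda = \langle f_z, f_{\bar z}\rangle_\C \geq 0,
\end{equation*}
in any local holomorphic coordinate $z$. First we replace $S^N$ by the smallest totally geodesic subsphere $S^{N'}$ containing $f(S^2)$, that is, the intersection of $S^N$ with the linear span of $f(S^2)$. Then $f$ is linearly full there and is still minimal, since totally geodesic inclusions preserve minimality. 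It therefore suffices to show that a linearly full branched minimal $f$ is totally isotropic.

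The key step is to show by induction on $k$ that
\begin{equation*}
\langle f^{(r)}, f^{(s)}\rangle_\C = 0 \quad \text{for all } 1 \le r,s \text{ with } r+s \le k,
\end{equation*}
where $f^{(r)} = \partial^r f/\partial z^r$. The base case $r=s=1$ is conformality. For the inductive step, set $\Phi_k = \langle f^{(r)}, f^{(s)}\rangle_\C \, dz^k$ with $r+s = k$. Using the vanishing already known for smaller sums, $\Phi_k$ reduces up to a constant to $\langle f^{(j)}, f^{(j)}\rangle_\C\,dz^{2j}$ (or $\langle f^{(j)},f^{(j+1)}\rangle_\C\,dz^{2j+1}$, which is half the derivative of the previous one, hence zero). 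We then show $\partial_{\bar z}\langle f^{(j)}, f^{(j)}\rangle_\C = 0$. To do this, differentiate $f_{z\bar z} = -\lambda f$ repeatedly in $z$: this expresses $\partial_{\bar z} f^{(j)}$ as a linear combination of $f, f^{(1)}, \dots, f^{(j-1)}$ with smooth coefficients. Pairing this combination with $f^{(j)}$ gives zero by the inductive hypothesis, together with $\langle f, f^{(m)}\rangle_\C = 0$ for $m\ge1$. That last identity follows from $\langle f,f\rangle=1$ and the earlier vanishing: for instance $\langle f,f_{zz}\rangle_\C = -\langle f_z,f_z\rangle_\C$, and similarly in higher order. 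Consequently $\langle f^{(j)},f^{(j)}\rangle_\C\,dz^{2j}$ is a well-defined holomorphic section of $\mathcal K^{2j}$. Well-definedness holds because, modulo the inductive vanishing, the chain rule changes it only by the factor $(dw/dz)^{2j}$. On $S^2$ the bundle $\mathcal K^{2j}$ has negative degree $-4j$, so the section vanishes identically. At branch points we argue on the regular locus first and extend by continuity; the holomorphic differentials are smooth across the branch points.

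The main obstacle I expect is the bookkeeping in this inductive step, namely checking that $\partial_{\bar z} f^{(j)}$ really lies in the span of lower derivatives and $f$, and that the pairing is tensorial under coordinate changes. This reduces to the formula $\partial_{\bar z} f^{(j)} = \partial_z^{j-1}(-\lambda f)$, which lies in $\mathrm{span}(f, f^{(1)},\dots,f^{(j-1)})$ by Leibniz. Induction on $j$ then yields all $\langle f^{(r)},f^{(s)}\rangle_\C = 0$. Linear fullness is used only to identify the target sphere. The argument does not need it to terminate, because the osculating spaces have bounded dimension $\le (N'+1)/2$, and isotropy is exactly the stated condition.
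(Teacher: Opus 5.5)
The paper gives no proof of this statement; it cites it from Calabi. Your argument is Calabi's classical one and it is correct: inductively, $\langle f^{(j)}, f^{(j)}\rangle_\C\, dz^{2j}$ is a holomorphic section of $\mathcal{K}^{2j}$, which has degree $-4j<0$ on $S^2$ and so vanishes, and the mixed and odd pairings follow by differentiating the lower-order identities. The branch-point extension is not even needed, since $f$ is a smooth harmonic map and $f_{z\bar z}=-\lambda f$ holds everywhere.
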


\subsection{Frame reduction}

\indent \indent Let us now assume $f: \Sigma \to S^{2n}$ is a linearly full, totally isotropic branched immersion. Let $B \subset \Sigma$ denote the set of branch points of $f$ and let $\Sigma'= \Sigma \setminus B$ denote the open set on which $f$ is an immersion.

The pullback bundle $f^* \SO(2n+1) \to \Sigma$ may be identified with the set of orthonormal frames for $\R^{2n+1}$ based at points of $\Sigma.$ Define a principal $\SO(2) \times \SO(2n-2)$-bundle $\mathcal{B}^{(1)} \to \Sigma'$ over the regular locus of $f$ by restricting to frames for which $\mathbf{f}_{\overline{1}}$ spans $df(T^{1,0}\Sigma').$ On $\mathcal{B}^{(1)}$ we have the following equations
\begin{equation*}
	\eta_{a} = 0 \:\:\: (2 \leq a \leq n), \quad \eta = \eta_1 \neq 0.
\end{equation*}
The form $\eta$ is type $(1,0)$ for $\Sigma$ and the metric $g_{f}$ and area form $dA$ on $\Sigma'$ induced by $f$ pull back to $\mathcal{B}^{(1)}$ as
\begin{equation*}
	g_{f} = \eta \cdot \overline{\eta}, \quad dA = \tfrac{i}{2} \eta \wedge \overline{\eta}.
\end{equation*}
Differentiating the condition $\eta_a = 0$ and using the structure equations (\ref{eq:structeqsC}) gives
\begin{equation*}
	-\kappa_{a \overline{1}} \wedge \eta - \sigma_{a1} \wedge \overline{\eta} = 0, \quad 2 \leq a \leq n,
\end{equation*}
so Cartan's Lemma implies the existence of $\C$-valued functions $B_a, H_a, C_a$ for $2 \leq a \leq n$ such that
\begin{equation*}
	\begin{aligned}
		\kappa_{a \overline{1}} & = B_a \eta + H_a \overline{\eta}, \\
		\sigma_{a1}             & = H_a \eta + C_a \overline{\eta}. 
	\end{aligned}
\end{equation*}
The functions $H_a$ are the components of the mean curvature vector of $\Sigma$ so, since $f$ is minimal, $H_a = 0$. 
    
The condition $\langle \mathcal{O}_2, \mathcal{O}_2 \rangle_{\C} = 0$ implies the vector $\xi = B_a \mathbf{f}_{\overline{a}} + \overline{C_a} \mathbf{f}_a$ is null (i.e., $\langle \xi, \xi \rangle_{\C} = 0$). On the open set $\Sigma''$ in $\Sigma'$ where $\xi \neq 0$, the $\SO(2n-2)$-action allows us to reduce frames so that $\xi = w_2 \mathbf{f}_{\overline{2}},$ where $w_2$ is a $\C$-valued function. Linear fullness implies $w_2$ is not identically zero (indeed, if $w_2 = 0$, then the structure equations would imply $d(\mathbf{x} \wedge \mathbf{e}_1 \wedge \mathbf{e}_2) = 0$). Let $\mathcal{B}^{(2)} \to \Sigma''$ be the principal $\SO(2)^2 \times \SO(2n-4)$-subbundle of $\mathcal{B}^{(1)}$ defined by this frame restriction. Then, on $\mathcal{B}^{(2)},$ we have the following equations
\begin{equation*}
	\kappa_{2 \overline{1}} = w_2 \eta, \quad \kappa_{a \overline{1}} = 0 \:\: (a \geq 3), \quad \sigma_{a1} = 0 \:\: (a \geq 2).
\end{equation*}
The same argument can be repeated inductively. We summarise the result of this process in the following proposition.

\begin{prop}\label{prop:frameadapt}
	Let $f : \Sigma \to S^{2n}$ be a branched linearly full totally isotropic immersion. There is a dense open set $\Sigma^{\circ} \subset \Sigma$ and a principal $T^n$-subbundle $\mathcal{B}^{(n)} \to \Sigma^\circ$ of the pullback bundle $f^* \SO(2n+1) \to \Sigma^\circ$ on which
	\begin{equation*}
		\begin{aligned}
			\eta_a                    & = 0 \quad (a \geq 2),               \\
			\kappa_{r \overline{r-1}} & = w_r \eta \quad (2 \leq r \leq n), \\
			\kappa_{a \overline{b}}   & = 0 \quad (a > b +1),               \\
			\sigma_{ab}               & = 0 \quad (1 \leq a,b \leq n),      
		\end{aligned}
	\end{equation*}
	for some $\C$-valued functions $w_r,$ $2 \leq r \leq n,$ not identically zero on $\mathcal{B}^{(n)}.$
\end{prop}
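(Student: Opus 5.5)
Proposition \ref{prop:frameadapt} will follow by induction on the stage $k$ of frame reduction, $1 \le k \le n$. The inductive hypothesis $(H_k)$ is the following. On a dense open set $\Sigma^{(k)}$ there is a principal $\SO(2)^k \times \SO(2n-2k)$-subbundle $\mathcal{B}^{(k)}$ on which:
\begin{itemize}
\item $\eta_a = 0$ for $a \geq 2$;
\item $\kappa_{r\overline{r-1}} = w_r\eta$ for $2 \le r \le k$;
\item $\kappa_{a\overline{b}} = 0$ for $b \le k-1$ and $a > b+1$;
\item $\sigma_{ab} = 0$ whenever $b \le k-1$.
\end{itemize}
Along such frames the osculating space $\mathcal{O}_r$ equals $\mathrm{span}_\C(\mathbf{f}_{\overline{1}},\ldots,\mathbf{f}_{\overline{r}})$ for $r \le k$. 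The base cases $k=1,2$ are exactly the computations already carried out above. For the inductive step I would differentiate the relations indexed by $b = k-1$ for $a \geq k+1$, namely $\kappa_{a\overline{k-1}} = 0$ and $\sigma_{a,k-1}=0$, using \eqref{eq:structeqsC}.

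For $\kappa$ the relevant identity is
\begin{equation*}
d\kappa_{a\overline{k-1}} = -\kappa_{a\overline{c}}\wedge\kappa_{c\overline{k-1}} - \sigma_{ac}\wedge\overline{\sigma_{c,k-1}} + \tfrac12\eta_a\wedge\overline{\eta_{k-1}}.
\end{equation*}
The inductive vanishing kills everything except the term with $c = k$, so the identity reduces to $\kappa_{a\overline{k}}\wedge w_k\eta = 0$. Similarly, $d\sigma_{a,k-1}=0$ reduces to $\sigma_{ak}\wedge w_k\eta=0$ modulo already-vanishing terms. Since $w_k \ne 0$ on the open set where we work, Cartan's lemma gives
\begin{equation*}
\kappa_{a\overline{k}} = B^{(k)}_a\eta, \qquad \sigma_{ak} = C^{(k)}_a\eta, \qquad a \geq k+1.
\end{equation*}
Note that only $\eta$ terms survive here, because the other side of each wedge is a multiple of $\eta$; this replaces the role minimality played at the first stage. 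I must also check that $\sigma_{ab}$ with $a,b \ge k+1$ is unconstrained and that $\sigma_{kk}=0$ trivially, which follow from antisymmetry.

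Next I would use the frame equations \eqref{eq:frameeqsC} to compute $\partial_z \mathbf{f}_{\overline{k}}$. Modulo $\mathcal{O}_k$, this derivative is proportional to the vector
\begin{equation*}
\xi_k = B^{(k)}_a\mathbf{f}_{\overline{a}} + \overline{C^{(k)}_a}\mathbf{f}_a, \qquad a \geq k+1,
\end{equation*}
up to sign conventions. Total isotropy $\langle\mathcal{O}_{k+1},\mathcal{O}_{k+1}\rangle_\C=0$ then forces $\xi_k$ to be null. On the open set where $\xi_k \neq 0$, the $\SO(2n-2k)$ factor acts transitively on nonzero null vectors of $\mathrm{span}(\mathbf{e}_{2k+1},\ldots,\mathbf{e}_{2n})\otimes\C$ up to scale, so we may rotate to $\xi_k = w_{k+1}\mathbf{f}_{\overline{k+1}}$. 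This yields $(H_{k+1})$ with structure group $\SO(2)^{k+1}\times\SO(2n-2k-2)$.

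The remaining point, which I expect to be the main obstacle, is to show $w_{k+1}\not\equiv 0$ and that the good set stays dense. If $w_{k+1}$ vanished on an open set, then the frame equations would give $d(\mathbf{x}\wedge\mathbf{e}_1\wedge\cdots\wedge\mathbf{e}_{2k})=0$ there. Real analyticity of minimal surfaces would propagate this to all of $\Sigma$, contradicting linear fullness when $k<n$. Hence the zero set of $w_{k+1}$ has empty interior, and each $\Sigma^{(k+1)}$ is dense open. Since $\SO(0)$ is trivial, after $n-1$ steps the structure group is $T^n$. At the final stage all $\sigma_{ab}$ vanish: those with an index $\le n-1$ by induction, and $\sigma_{nn}=0$ by antisymmetry. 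Setting $\Sigma^\circ = \Sigma^{(n)}$ completes the argument.
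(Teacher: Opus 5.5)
Your route is the paper's own: it does stages one and two explicitly and then says the argument repeats inductively. Your reduction of $d\kappa_{a\overline{k-1}}=0$ to $\kappa_{a\overline{k}}\wedge w_k\eta=0$ is correct, but two steps need repair.

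\textbf{The $\sigma$-step has the wrong type.} Take $a\geq k+1$ in
$d\sigma_{a,k-1}=-\kappa_{a\overline{c}}\wedge\sigma_{c,k-1}+\kappa_{k-1,\overline{c}}\wedge\sigma_{ca}+\tfrac12\eta_a\wedge\eta_{k-1}$.
The first and last terms vanish. In the middle sum, $\sigma_{ca}=0$ for $c\leq k-1$, and $\kappa_{k-1,\overline{c}}=-\overline{\kappa_{c\overline{k-1}}}=0$ for $c\geq k+1$. The only survivor is $\kappa_{k-1,\overline{k}}\wedge\sigma_{ka}$, with $\kappa_{k-1,\overline{k}}=-\overline{w_k}\,\overline{\eta}$. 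Hence $\sigma_{ak}\wedge\overline{\eta}=0$ and $\sigma_{ak}=C^{(k)}_a\overline{\eta}$, not $C^{(k)}_a\eta$. This matches $\sigma_{a1}=C_a\overline{\eta}$ at stage one. What replaces minimality is that the structure equations kill the $\overline{\eta}$-part of $\kappa_{a\overline{k}}$ and the $\eta$-part of $\sigma_{ak}$.

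This is not cosmetic. Your $\xi_k=B^{(k)}_a\mathbf{f}_{\overline{a}}+\overline{C^{(k)}_a}\mathbf{f}_a$ is the $(1,0)$-derivative of $\mathbf{f}_{\overline{k}}$ only if $\overline{\sigma_{ka}}$ is a multiple of $\eta$. With $\sigma_{ak}=C_a\eta$, the $\mathbf{f}_a$-terms would lie in the $(0,1)$-part and total isotropy would not constrain them. You then could not conclude $\sigma_{ak}=0$ after rotating.

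\textbf{The rotation fails at the final step $k=n-1$.} There $\SO(2)$ acts on $\mathrm{span}(\mathbf{e}_{2n-1},\mathbf{e}_{2n})\otimes\C$ and fixes each of the two null lines $\C\mathbf{f}_n$ and $\C\mathbf{f}_{\overline{n}}$. So it is not transitive, and the nullity of $\xi_{n-1}=B_n\mathbf{f}_{\overline{n}}+\overline{C_n}\mathbf{f}_n$ only gives $B_n\overline{C_n}=0$.

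The bad case really occurs. Suppose $f$ has an adapted frame $(\mathbf{x},\mathbf{e}_1,\ldots,\mathbf{e}_{2n})$. Then $-f$ has the same osculating spaces, so any frame for $-f$ satisfying the conditions is $(-\mathbf{x},\mathbf{e}_1,\ldots,\mathbf{e}_{2n})$ up to $T^n$, which has determinant $-1$.

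You therefore need an orientation normalization. One of $B_n$, $C_n$ vanishes identically; the choice cannot switch along $\Sigma$, e.g.\ by continuity of the extended osculating flag. If it is $B_n$, replace $f$ by $-f$, which changes neither index nor nullity.

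The paper's write-up passes over the same point: its stage-one assertion about $\SO(2n-2)$ is exactly this false claim when $n=2$. So this corrects the normalization of the statement, not the strategy. For $k\leq n-2$ your transitivity claim is fine, since $2n-2k\geq 4$.
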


We adopt the conventions
\begin{equation*}
	w_1 = 1/\sqrt{2}, \quad w_{n+1} = 0.
\end{equation*}

On $\mathcal{B}^{(n)}$, the semibasic forms are $\eta$ and $\overline{\eta}$, and the connection forms are $\kappa_{r\overline{r}}$ for $1 \leq r \leq n$. Moreover, the equations (\ref{eq:frameeqsC}) reduce to
\begin{equation}\label{eq:dxfcomp}
	\begin{aligned}
		d \mathbf{x}                & = \tfrac{1}{2} \mathbf{f}_{\overline{1}} \eta + \tfrac{1}{2} \mathbf{f}_1 \overline{\eta},                                                                     \\
		d \mathbf{f}_{\overline{1}} & = - \overline{\eta} \mathbf{x} + \kappa_{1 \overline{1}} \mathbf{f}_{\overline{1}} + w_2 \eta \mathbf{f}_{\overline{2}},                                       \\
		d \mathbf{f}_{\overline{r}} & = - \overline{w_r} \overline{\eta} \mathbf{f}_{\overline{r-1}} + \kappa_{r \overline{r}} \mathbf{f}_{\overline{r}} + w_{r+1} \eta \mathbf{f}_{\overline{r+1}}, 
	\end{aligned}
\end{equation}
while the structure equations (\ref{eq:structeqsC}) imply
\begin{equation}\label{eq:structeqsSig}
	\begin{aligned}
		d \eta                    & = - \kappa_{1 \overline{1}} \wedge \eta,                                                                                                  \\
		d \kappa_{r \overline{r}} & = \left( \left\lvert w_r \right\rvert^2 - \left\lvert w_{r+1} \right\rvert^2 \right) \eta \wedge \overline{\eta} \quad (1 \leq r \leq n), \\
		d w_r                     & = z_r \eta + w_r(\kappa_{1 \overline{1}} + \kappa_{r-1 \overline{r-1}} - \kappa_{r \overline{r}}),                                        
	\end{aligned}
\end{equation}
for some $\C$-valued functions $z_r$ on $\mathcal{B}^{(n)}.$

\subsection{The twistor lift}\label{ssect:twistorlift}

\indent \indent On $\Sigma^{\circ},$ define a flag of vector bundles
\begin{equation*}
	\mathcal{P}_r = \mathrm{span}_\C(\mathbf{f}_r, \ldots, \mathbf{f}_n), \quad 1 \leq r \leq n, \quad \mathcal{P}_{n+1} = 0.
\end{equation*}
The equations for $d \mathbf{f}_j$ obtained by conjugating (\ref{eq:dxfcomp}) imply that each $\mathcal{P}_r$ is a holomorphic subbundle of the trivial $\C^{2n+1}$-bundle $\underline{\C}^{2n+1} \to \Sigma^\circ.$ In fact, using the technique of ``filling out zeros'' \cite[Prop. 2.2]{BursWood86}, it is straightforward to show that the $\mathcal{P}_r$ extend over the locus $\Sigma \setminus \Sigma^{\circ}$ to produce a global holomorphic isotropic flag
\begin{equation}\label{eq:holomflag}
	0 = \mathcal{P}_{n+1} \subset \mathcal{P}_n \subset \cdots \subset \mathcal{P}_1 \subset \underline{\C}^{2n+1}
\end{equation}
over $\Sigma.$

Define holomorphic line bundles $\mathcal{T}_r \to \Sigma$ and $\mathcal{G}_r \to \Sigma$, $1 \leq r \leq n$, by
\begin{equation*}
	\mathcal{T}_r = \mathcal{P}_r / \mathcal{P}_{r+1}, \quad \mathcal{G}_r = \mathcal{T}_r^*.
\end{equation*}
The $\C$-bilinear inner product $\langle \cdot, \cdot \rangle_\C$ identifies $\mathbf{f}_{\overline{r}}$ with a unitary frame of $\mathcal{G}_r,$ so we have $\mathcal{G}_r = \mathrm{span}_\C(\mathbf{f}_{\overline{r}})$. The Chern connection of $\mathcal{G}_r$ has connection form $\kappa_{r \overline{r}}$. It will be useful to set
\begin{equation*}
	\mathcal{T}_0 = \mathcal{P}_1^\perp / \mathcal{P}_1, \quad \mathcal{G}_0 = \mathcal{T}_0^*.
\end{equation*}
The position vector $\mathbf{x} = f$ is a section of $\mathcal{P}_1^\perp.$ The structure equation (\ref{eq:dxfcomp}) implies that $\delbar \mathbf{x} = \tfrac{1}{2} \mathbf{f}_1 \overline{\eta} \in \Omega^{0,1}(\mathcal{P}_1)$, so the class $\left[ \mathbf{x} \right] \in \Gamma(\mathcal{T}_0)$ is a nowhere-vanishing holomorphic section of $\mathcal{T}_0,$ giving canonical trivializations of $\mathcal{T}_0$ and $\mathcal{G}_0.$

\begin{definition}
	For $1 \leq r \leq n,$ let
	\begin{equation*}
		\Psi_r : \Sigma \to Z_{n-r+1}, \quad \Psi_r(p) = \mathcal{P}_r(p)
	\end{equation*}
	be the \emph{partial twistor lift} of $\Sigma.$ The \emph{full twistor lift} of $\Sigma$ is the map $\Psi = \Psi_1: \Sigma \to Z.$
\end{definition}
Each $\Psi_r$ is a horizontal holomorphic curve in $Z_{n-r+1}$. Moreover, the flag (\ref{eq:holomflag}) defines a holomorphic map $\widehat{\Psi} : \Sigma \to \mathrm{Fl},$ so that we have the following diagram:
\[\begin{tikzcd}
	{\mathrm{Fl}} &&& \\
	\\
	{Z_1} & {Z_2} & \cdots & {Z_n} \\
	\\
	\Sigma &&& {S^{2n}}
	\arrow[from=1-1, to=3-1]
	\arrow[from=1-1, to=3-2]
	\arrow[from=1-1, to=3-3]
	\arrow[from=1-1, to=3-4]
	\arrow["{\pi_n}"{description}, from=3-4, to=5-4]
	\arrow["{\widehat{\Psi}}", curve={height=-30pt}, from=5-1, to=1-1]
	\arrow["{\Psi_n}"{description, pos=0.4}, from=5-1, to=3-1]
	\arrow["{\Psi_{n-1}}"{description}, from=5-1, to=3-2]
	\arrow[from=5-1, to=3-3]
	\arrow["{\Psi_1}"{description}, from=5-1, to=3-4]
	\arrow["f", from=5-1, to=5-4]
\end{tikzcd}\]
The frame adaptations of Proposition \ref{prop:frameadapt} show $\widehat{\Psi}$ defines an integral submanifold of the holomorphic distribution $\mathcal{D}^{1,0}$ (\ref{eq:superhorizontal}). Since
\begin{equation*}
	\mathcal{D}^{1,0} \cong \bigoplus_{r=1}^{n} \mathcal{T}_r^* \otimes \mathcal{T}_{r-1},
\end{equation*}
the differential $d \widehat{\Psi} \colon T^{1,0}\Sigma \to \mathcal{D}^{1,0}$  has holomorphic components
\begin{equation*}
	\Phi_r \in H^0(\mathcal{K} \otimes \mathcal{T}_r^* \otimes \mathcal{T}_{r-1}) = H^0(\mathcal{K} \otimes \mathcal{G}_{r-1}^* \otimes \mathcal{G}_r),
\end{equation*}
where $\mathcal{K} = \Lambda^{1,0}(T^*\Sigma)$ is the canonical bundle of $\Sigma.$ \\
\indent For $r=1,$ using the canonical trivialization of $\mathcal{T}_0,$ we have
\begin{equation*}
	\Phi_1 = \del f = \tfrac{1}{2} \eta \mathbf{f}_{\overline{1}} \in H^0(\mathcal{K} \otimes \mathcal{G}_1).
\end{equation*}
For $2 \leq r \leq n$ we have the expressions
\begin{equation*}
	\Phi_r = w_r \eta \otimes \mathbf{f}_{\overline{r-1}}^* \otimes \mathbf{f}_{\overline{r}}.
\end{equation*}
In particular, after identifying $\mathcal{T}_r^* = \mathcal{G}_r$ and $\mathcal{T}_{r-1} = \mathcal{G}^{*}_{r-1},$ the structure equations (\ref{eq:dxfcomp}) imply
\begin{equation*}
	d \Psi_r = -\Phi_r \in H^{0}(\mathcal{K} \otimes \mathrm{Hom}(\mathcal{T}_r, \mathcal{T}_{r-1})), \quad 1 \leq r \leq n.
\end{equation*}
Consequently, the zero divisor of $\Phi_r$ is equal to the ramification divisor of $\Psi_r.$ In the particular case of $r = 1$, since $f = \pi_n \circ \Psi_1$ and $d \pi_n$ is an isomorphism on the horizontal distribution $\mathcal{H}_n,$ the zero divisor of $\Phi_1$ is the branch divisor of the branched immersion $f$. Let $B_1$ denote this branch divisor. Then, since $\Phi_1$ defines a holomorphic bundle map $\mathcal{K}^* \to \mathcal{G}_1$ with zero divisor $B_1,$ we have
\begin{equation*}
	\mathcal{G}_1 \cong \mathcal{K}^{*} \otimes \mathcal{O}(B_1).
\end{equation*}

\subsection{Degrees and ramification}\label{ssect:degram}

\indent \indent For $1 \leq r \leq n,$ let $B_r$ denote the zero divisor of $\Phi_r$ and define
\begin{equation*}
	a_r = \deg \mathcal{G}_r, \quad b_r = \deg B_r.
\end{equation*}
Since $\Phi_r$ is a non-zero holomorphic section of $\mathcal{L}_r = \mathcal{K} \otimes \mathcal{G}^{*}_{r-1} \otimes \mathcal{G}_r,$ we have
\begin{equation*}
	b_r = \deg(\mathcal{L}_r) = 2g - 2 - a_{r-1} + a_r, \qquad 1 \leq r \leq n,
\end{equation*}
where $g$ is the genus of $\Sigma$ and we have $a_0 = 0$ since $\mathcal{G}_0$ is trivial. Since $\mathcal{G}_r \cong (\mathcal{K}^{*})^{\otimes r} \otimes \mathcal{L}_1 \otimes \cdots \otimes \mathcal{L}_r$, it follows that
\begin{equation}\label{eq:generaldegbound}
	a_r = 2r(1-g) + \sum_{j=1}^r b_j \geq 2r(1-g).
\end{equation}
In the case where $\Sigma \cong S^2,$ we get
\begin{equation}\label{eq:spheredegbound}
	a_r = 2r + \sum_{j=1}^r b_j \geq 2r.
\end{equation}

The next result, a standard consequence of the classical theory of associated curves \cite{Brycurvenotes19,GriffHarrBook78}, will be useful later. To set notation, let $\mathcal{L}_{\mathrm{spin}}$ denote the pullback of the spinor line bundle $\mathrm{Ca}^* \mathcal{O}_{\mathbb{P}(\Delta_n)}(1) \to Z$ to $\Sigma$. By (\ref{eq:spinorsquarerel}), we have 
\begin{equation}\label{eq:spinorlinesquare}
    \mathcal{L}_{\mathrm{spin}}^{\otimes 2} \cong \det \mathcal{P}_1^* \cong \mathcal{G}_1 \otimes \cdots \otimes \mathcal{G}_n.
\end{equation}

\begin{prop}\label{prop:h0lowerbound}
    We have:
    \begin{enumerate}[label=(\alph*)]
        \item  $h^0(\mathcal{G}_r \otimes \cdots \otimes \mathcal{G}_n) \geq (n-r+1)(n+r)+1.$
        \item $h^0(\mathcal{L}_{\mathrm{spin}}) \geq \tfrac{1}{2}n(n+1)+1.$
    \end{enumerate}
\end{prop}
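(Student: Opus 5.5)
Both bounds come from pulling back hyperplane sections along a linearly full holomorphic curve in a projective embedding: the image of the restriction map is at least as large as the space of linear functionals that do not vanish identically on the curve. For (a), the line bundle $\mathcal{G}_r \otimes \cdots \otimes \mathcal{G}_n \cong \det \mathcal{P}_r^*$ is the pullback under $\Psi_r$ of the Plücker hyperplane bundle $\mathrm{Plu}^*\mathcal{O}(1)$ on $Z_{n-r+1} \subset \mathbb{P}(\Lambda^{n-r+1}\C^{2n+1})$. Each $\alpha \in (\Lambda^{n-r+1}\C^{2n+1})^*$ therefore restricts to a holomorphic section of $\det\mathcal{P}_r^*$, given concretely by $p \mapsto \alpha|_{\Lambda^{n-r+1}\mathcal{P}_r(p)}$. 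This gives a linear map
\[ \rho_r : (\Lambda^{n-r+1}\C^{2n+1})^* \to H^0(\mathcal{G}_r\otimes\cdots\otimes\mathcal{G}_n). \]
The estimate $h^0 \geq \dim \operatorname{im}\rho_r$ holds, and $\operatorname{im}\rho_r$ is dual to the linear span $W_r \subset \Lambda^{n-r+1}\C^{2n+1}$ of the Plücker image $\mathrm{Plu}(\Psi_r(\Sigma))$. So (a) reduces to showing $\dim W_r \geq (n-r+1)(n+r)+1$.

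The natural candidate for $W_r$ is the following. The span of the Plücker image of the whole isotropic Grassmannian $Z_{n-r+1}$ is the irreducible $\SO(2n+1,\C)$-module $\Lambda^{m}\C^{2n+1}$ with $m = n-r+1$ (for $m\le n$ this is irreducible for $B_n$), and it is too big. Instead, I would use the associated-curve argument. Differentiate the decomposable vector $\mathbf{f}_r\wedge\cdots\wedge\mathbf{f}_n$ repeatedly along $\Sigma$ using the conjugate of (\ref{eq:dxfcomp}). Each $\del$-derivative replaces some $\mathbf{f}_j$ by $\mathbf{f}_{j-1}$ (and $\mathbf{f}_1$ by $\mathbf{x}$, then $\mathbf{x}$ by $\mathbf{f}_{\bar 1}$, and so on), with coefficients products of the $w_j$, which are nonzero on $\Sigma^\circ$. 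Linear fullness, in the form that the osculating flag of $\mathbf{f}_n$ fills $\C^{2n+1}$ in the ordered basis $\mathbf{f}_n,\dots,\mathbf{f}_1,\mathbf{x},\mathbf{f}_{\bar 1},\dots,\mathbf{f}_{\bar n}$, then shows that the higher derivatives produce leading terms $e_{i_1}\wedge\cdots\wedge e_{i_m}$. Here the indices $i_1<\dots<i_m$ come from this ordered basis of size $2n+1$, and the leading terms are distinct, one for each position reachable by moving "particles" rightward subject to isotropy.

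The count should be organized by the weight of the leading term, i.e.\ the sum of the positions. The weight starts at $m(m+1)/2$ and can increase by one at each step until it reaches the maximal weight of an isotropic position. The reachable weights form an interval of consecutive integers, and each weight contributes at least one independent vector, so $\dim W_r \geq (\text{max weight} - \text{min weight}) + 1$. For an isotropic $m$-tuple, the last allowed configuration occupies positions $n+2,\dots,n+1+m$ (i.e.\ $\mathbf{f}_{\bar 1},\dots,\mathbf{f}_{\bar m}$ after $\mathbf{x}$), or the closest isotropic analogue. The difference in weights is then $m(n+1)$ minus corrections. The target value $m(2n+2-m)-\dots$ should equal $(n-r+1)(n+r) = m(2n+1-m)$, which is exactly $m\cdot\big((2n+1)-m\big)$, the dimension of the ordinary Grassmannian, as expected for a "rational normal" type curve. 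The key verification is that $m(2n+1-m)$ strictly increasing steps are available: each step moves one index right by one, so the total displacement from $\{1,\dots,m\}$ to $\{2n+2-m,\dots,2n+1\}$ is $m(2n+1-m)$. The final configuration $\mathbf{f}_{\bar{n-m+1}}\wedge\cdots\wedge\mathbf{f}_{\bar n}$ is isotropic, and I must check that a monotone path of isotropic configurations avoiding the pair $\{\mathbf{f}_j,\mathbf{f}_{\bar j}\}$ exists, which it does by always moving the frontmost particle first.

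The main obstacle is showing that the leading coefficients are genuinely nonzero generically, i.e.\ that the differentiation realizes each such step with a nonzero coefficient. Here the coefficients are products of the $w_j$, nonvanishing on $\Sigma^\circ$, and one takes the triangular (leading-term) argument at a generic point. For (b), I would run the same argument for the Cartan embedding. The pure spinor of $\mathcal{P}_1$ has weight $(-\tfrac12,\dots,-\tfrac12)$, and each application of $\del$ raises it by simple-root steps: the $\mathbf{f}$-shifts act as $e_j - e_{j+1}$ and $e_n$. The length of the chain from lowest to highest weight of $\Delta_n$ is the height of $2\rho^\vee$ paired appropriately, namely $\sum_{j}(n-j+1) = n(n+1)/2$. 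This gives $\dim$ span $\geq n(n+1)/2+1$, and hence $h^0(\mathcal{L}_{\mathrm{spin}})\ge \tfrac12 n(n+1)+1$.
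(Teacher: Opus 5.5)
Your reduction of (a) to $\dim W_r\ge m(2n+1-m)+1$ is sound. Reading this bound off from the weight-triangular structure of the iterated $\del_z$-derivatives of $\mathbf{f}_r\wedge\cdots\wedge\mathbf{f}_n$ is the paper's argument in dual form. The paper identifies $\Psi_r$ with the $m$-th associated curve of the linearly full curve $\Psi_n\subset\mathbb{CP}^{2n}$. It then counts the distinct vanishing orders $i_1+\cdots+i_m-\tfrac12 m(m-1)$ of the Wronskian Pl\"ucker coordinates at a generic point, over \emph{all} $m$-subsets of $\{0,\dots,2n\}$.

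That last point is where your write-up goes wrong: isotropy plays no role, and the isotropic monotone path you claim to find does not exist. Since $m\le n$, every particle must pass through the position of $\mathbf{x}$, which is not null, to get from $\{1,\dots,m\}$ to $\{2n+2-m,\dots,2n+1\}$. Already for $r=1$ the first derivative produces $\mathbf{f}_n\wedge\cdots\wedge\mathbf{f}_2\wedge\mathbf{x}$. Drop isotropy and count all $m$-subsets, as for the ordinary Grassmannian.

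What you actually need, and only gesture at, is that the weight-$k$ part of the $k$-th derivative at $p$ is nonzero for $0\le k\le m(2n+1-m)$. This part is $X^k(\mathbf{f}_r\wedge\cdots\wedge\mathbf{f}_n)$, with $X$ the one-step shift operator at $p$. It is a sum over many configurations, so nonvanishing of the individual $w_j(p)$ is not by itself the argument. It does hold, because $X$ acts on $\Lambda^m$ as a derivation that keeps particles in order (collisions give $0$). Each configuration's coefficient is therefore a positive path count times a single product of nonzero shift coefficients, with no signs and no cancellation.

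For (b), this nonvanishing step is the whole content, and your sketch does not supply it. A chain of weights of length $\tfrac12 n(n+1)$ from the lowest to the highest weight of $\Delta_n$ does not imply $X^k\varphi\neq 0$ for $k\le \tfrac12 n(n+1)$. The top part is again a sum over several weights of equal height, each coefficient is a sum over paths in the weight diagram, and in the Clifford model the root vectors act with signs. So the positivity argument from (a) does not transfer.

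One way to close the gap uses Kostant's principal $\mathfrak{sl}_2$:
\begin{itemize}
\item Lift the adapted frame to $\mathrm{Spin}(2n+1)$ at a point where all $w_j\neq0$. There $\omega(\del_z)$ has only Cartan components and positive simple-root components, the latter all nonzero, so $X$ is principal nilpotent.
\item By Kostant, $X$ lies in an $\mathfrak{sl}_2$-triple with neutral element $2\rho^\vee$.
\item The lowest-weight pure spinor is killed by the lowering element and has eigenvalue $-\tfrac12 n(n+1)$. It therefore generates an irreducible $\mathfrak{sl}_2$-module of dimension $\tfrac12 n(n+1)+1$, so $X^k\varphi\neq0$ for $0\le k\le \tfrac12 n(n+1)$.
\end{itemize}

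The paper avoids this representation theory altogether. Cartan's identity $\Psi^*P_I/\Psi^*P_\emptyset=c_I(\sigma_I/\sigma_\emptyset)^2$ converts the Wronskian vanishing orders $2\sum_{i\in I}i$ of suitable Pl\"ucker coordinates, already computed in (a), into vanishing orders $\sum_{i\in I}i$ of the spinor coordinates $\sigma_I$. These realize every integer from $0$ to $\tfrac12 n(n+1)$.
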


\begin{proof}
    Let $m = n-r+1 = \mathrm{rk}\,\mathcal{P}_r.$ We have $\mathcal{G}_r \otimes \cdots \otimes \mathcal{G}_n \cong \Lambda^m (\mathcal{P}_r^*).$ Since $\mathcal{P}_r$ is a subbundle of the trivial bundle $\underline{\C}^{2n+1},$ restriction to $\mathcal{P}_r$ defines a map $\mathrm{res}_r: \Lambda^m (\C^{2n+1})^* \to H^0(\mathcal{G}_r \otimes \cdots \otimes \mathcal{G}_n).$ We will prove part (a) by showing this map has rank at least $(n-r+1)(n+r)+1.$

    Consider the $n$th partial twistor lift $\Psi_n : \Sigma \to Z_1 \subset \mathbb{CP}^{2n}.$ The structure equations (\ref{eq:dxfcomp}) show $\Psi_n$ is linearly full (see also \cite{Barbosa75}). The structure equations (\ref{eq:dxfcomp}) also show that under the Pl\"ucker embedding $Z_m \subset \mathbb{P}(\Lambda^m \C^{2n+1}),$ the $r$th partial twistor lift $\Psi_r$ is the $m$th associated curve of $\Psi_n.$ 
    
    Choose a point $p \in \Sigma^\circ$ away from the vanishing loci of the $\Phi_r$ and a basis $v_0, \ldots v_{2n}$ of $\C^{2n+1},$ with
        \begin{equation*}
            v_i = \mathbf{f}_{n-i}(p), \:\: 0 \leq i \leq n-1, \quad
            v_n = \mathbf{x}(p), \quad v_{n+i} = \mathbf{f}_{\overline{i}}(p), \:\: 1 \leq i \leq n.
        \end{equation*}
    for some adapted frame at $p$ (i.e. some element of $\mathcal{B}^{(n)}_p$). Then, since the $\Phi_r(p)$ are non-vanishing, there are locally defined holomorphic functions $h_0, \ldots, h_{2n}$ near $p$ with
    \begin{equation*}
        \Psi_n = \left[ h_0 v_0 + \cdots + h_{2n} v_{2n} \right]
    \end{equation*}
    near $p$ and the order of vanishing of $h_j$ at $p$ exactly equal to $j$ (see also \cite[\S2.1]{Brycurvenotes19}).
    
    The components of the $m$th associated curve $\Psi_r$ in the Pl\"ucker coordinates corresponding to the basis $v_0, \ldots v_{2n}$ are the Wronskians $W(h_{i_1}, \ldots, h_{i_m}),$ $i_1 < \cdots < i_m.$ These Wronskians are local representatives of sections in the image of $\mathrm{res}_r.$ By \cite[\S2.2]{Brycurvenotes19}, the vanishing order of $W(h_{i_1}, \ldots, h_{i_m})$ at $p$ is $i_1 + \cdots + i_m - \tfrac{1}{2}m(m-1)$. As $i_1 < \cdots < i_m$ varies, these vanishing orders include all integers between $0$ and $(n-r+1)(n+r).$ Functions with different vanishing orders are linearly independent, so at least $(n-r+1)(n+r)+1$ of the $W(h_{i_1}, \ldots, h_{i_m})$ are linearly independent, proving (a).

    To prove (b), we will use similar ideas to show that the restriction map $\Delta_n^* \to H^0(\mathcal{L}_{\mathrm{spin}})$ has rank at least $\tfrac{1}{2}n(n+1) + 1.$ Set
    \begin{equation*}
        E = \mathcal{P}_1(p) = \mathrm{span}_\C(v_0, \ldots, v_{n-1})
    \end{equation*}
    and use the model $\Delta_n = \Lambda^\bullet(E^*).$ Let $\xi^1, \ldots, \xi^n$ be the basis of $E^*$ dual to $v_{n-1},\ldots,v_0.$
    Then, the composition $\mathrm{Ca} \circ \Psi : \Sigma \to \mathbb{P}(\Delta_n)$ may be expressed locally as
    \begin{equation*}
        \mathrm{Ca} \circ \Psi = \left[ \sum_{I} \sigma_I \xi^I \right], \quad \text{where} \:\: \xi^I = \bigwedge_{i \in I} \xi^i
    \end{equation*}
    and the sum is taken over subsets $I \subset \lbrace 1, \ldots n \rbrace.$ The $\sigma_I$ are then local representatives of sections in the image of the restriction map. We have $\mathrm{Ca} \circ \Psi(p) = [1] = [\xi^{\emptyset}].$ Next, let $P_I$ denote the Pl\"ucker coordinate obtained by replacing $v_{n-i}$ by $v_{n+i}$ in $v_{0} \wedge \cdots \wedge v_{n-1}$ for each $i \in I.$ Then, a formula of Cartan \cite{Cartan81} (see also \cite[\S1.6-1.7]{Hano96}) gives
    \begin{equation*}
        \frac{\Psi^* P_I}{\Psi^* P_{\emptyset}} = c_I\left(\frac{\sigma_I}{\sigma_\emptyset}\right)^2
    \end{equation*}
    for some non-zero constant $c_I.$ The above argument on Wronskians gives that the vanishing order of $\Psi^* P_I$ at $p$ is $2 \sum_{i \in I} i,$ so the vanishing order of $\sigma_{I}$ is $\sum_{i \in I} i.$ As $I$ varies across all subsets of $\lbrace 1, \ldots, n \rbrace,$ these vanishing orders include all integers between $0$ and $\tfrac{1}{2}n(n+1).$ Therefore, at least $\tfrac{1}{2}n(n+1) + 1$ of the $\sigma_I$ are linearly independent, proving (b).
\end{proof}

\begin{cor}\label{cor:asumbound}
    Suppose $\Sigma$ is a compact Riemann surface with genus $g.$ For $1 \leq r \leq n$, we have
    \begin{equation*}
        \sum_{j=r}^n a_j \geq (n-r+1)(n+r) + \min\lbrace g, (n-r+1)(n+r) \rbrace.
    \end{equation*}
    In particular, $a_n \geq 2n + \min\lbrace 2n, g \rbrace.$ For $r=1,$ we have the stronger bound
    \begin{equation*}
        \sum_{j=1}^n a_j \geq n(n+1) + \min\lbrace n(n+1), 2g \rbrace.
    \end{equation*}
\end{cor}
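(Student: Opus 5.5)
The plan is to convert the lower bounds on $h^0$ from Proposition \ref{prop:h0lowerbound} into lower bounds on degrees, using Riemann--Roch and Clifford's theorem on the compact Riemann surface $\Sigma$. The key observation is that $\sum_{j=r}^n a_j = \deg(\mathcal{G}_r \otimes \cdots \otimes \mathcal{G}_n)$.

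We first isolate a general lemma. Let $\mathcal{L}$ be a holomorphic line bundle on $\Sigma$ of degree $d$ with $h^0(\mathcal{L}) \geq N+1$, where $N \geq 0$. Then
\begin{equation*}
d \geq N + \min\lbrace g, N \rbrace.
\end{equation*}
To prove it, we split into two cases according to whether $\mathcal{L}$ is special.
\begin{enumerate}[label=(\roman*)]
\item If $h^1(\mathcal{L}) = 0$, Riemann--Roch gives $h^0(\mathcal{L}) = d - g + 1$. Hence $d = h^0(\mathcal{L}) - 1 + g \geq N + g$.
\item If $h^1(\mathcal{L}) > 0$, then $h^0(\mathcal{K} \otimes \mathcal{L}^*) > 0$ and also $h^0(\mathcal{L}) \geq 1$. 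So $\mathcal{L}$ is special and effective, with $0 \leq d \leq 2g-2$. Clifford's theorem then gives $h^0(\mathcal{L}) - 1 \leq d/2$, so $d \geq 2N \geq N + \min\lbrace g, N\rbrace$.
\end{enumerate}
In either case the claimed inequality holds.

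For the main bound, apply the lemma to $\mathcal{L} = \mathcal{G}_r \otimes \cdots \otimes \mathcal{G}_n$ with $N = (n-r+1)(n+r)$. Proposition \ref{prop:h0lowerbound}(a) supplies the hypothesis $h^0(\mathcal{L}) \geq N+1$. This yields $\sum_{j=r}^n a_j \geq (n-r+1)(n+r) + \min\lbrace g, (n-r+1)(n+r)\rbrace$. Taking $r = n$ gives $N = 2n$, and hence $a_n \geq 2n + \min\lbrace 2n, g\rbrace$.

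For the sharper $r=1$ statement, apply the lemma instead to $\mathcal{L}_{\mathrm{spin}}$ with $M = \tfrac12 n(n+1)$. Proposition \ref{prop:h0lowerbound}(b) gives $h^0(\mathcal{L}_{\mathrm{spin}}) \geq M+1$, so $\deg \mathcal{L}_{\mathrm{spin}} \geq M + \min\lbrace g, M\rbrace$. By (\ref{eq:spinorlinesquare}), $\sum_{j=1}^n a_j = 2\deg \mathcal{L}_{\mathrm{spin}}$. Doubling gives $\sum_{j=1}^n a_j \geq n(n+1) + \min\lbrace 2g, n(n+1)\rbrace$.

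The only delicate point is checking that Clifford's theorem applies in the special case (ii). It requires both $h^0(\mathcal{L})$ and $h^1(\mathcal{L})$ to be positive, and both hold here. The rest is bookkeeping.
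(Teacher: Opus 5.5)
Your proof is correct and follows the paper's argument essentially verbatim. You use the same Riemann--Roch/Clifford dichotomy on whether $h^1(\mathcal{L})$ vanishes, applied to $\mathcal{G}_r\otimes\cdots\otimes\mathcal{G}_n$ via Proposition~\ref{prop:h0lowerbound}(a), and then to $\mathcal{L}_{\mathrm{spin}}$ via (\ref{eq:spinorlinesquare}) and Proposition~\ref{prop:h0lowerbound}(b) for the sharper $r=1$ bound; isolating the dichotomy as a standalone lemma ($h^0(\mathcal{L})\geq N+1 \Rightarrow \deg\mathcal{L}\geq N+\min\{g,N\}$) is only a presentational difference.
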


\begin{proof}
    Let $\mathcal{L} = \mathcal{G}_r \otimes \cdots \otimes \mathcal{G}_n,$ so $\deg \mathcal{L} = \sum_{j=r}^n a_j.$ If $h^1(\mathcal{L}) = 0,$ Riemann--Roch gives
    \begin{equation*}
        \sum_{j=r}^n a_j = h^0(\mathcal{L}) + g - 1 \geq (n-r+1)(n+r) + g
    \end{equation*}
    by Proposition \ref{prop:h0lowerbound}. 
    
    On the other hand, suppose $h^1(\mathcal{L}) > 0.$ Noting that Proposition \ref{prop:h0lowerbound} implies $h^0(\mathcal{L}) > 0,$ so $\mathcal{L}$ is effective, Clifford's theorem on special divisors \cite[\S2.3]{GriffHarrBook78} gives
    \begin{equation*}
        \sum_{j=r}^n a_j \geq 2(h^0(\mathcal{L})-1) \geq 2 (n-r+1)(n+r),
    \end{equation*}
    again by Proposition \ref{prop:h0lowerbound}.

    The $r=1$ bound is proven by applying the same reasoning to $\mathcal{L}_{\mathrm{spin}},$ which has degree $\tfrac{1}{2} \sum_{j=1}^n a_j$ by (\ref{eq:spinorlinesquare}).
\end{proof}

For $n=2,$ Chi--Mo \cite{ChiMo96} use Clifford's Theorem to obtain degree bounds for totally isotropic surfaces in $S^4.$ The proof of Corollary \ref{cor:asumbound} is a generalization of their work.

The following general propositions will also be useful later.

\begin{prop}\label{prop:degreecohomobounds}
Let $\mathcal{L} \to \Sigma$ be a holomorphic line bundle.
	\begin{enumerate}
		\item If $\deg(\mathcal{L}) < 0$, then $H^0(\mathcal{L}) = 0$.
		\item If $\deg(\mathcal{L}) > 2g - 2$, then $H^1(\mathcal{L}) = 0$.
	\end{enumerate}
\end{prop}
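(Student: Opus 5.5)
Both parts are standard facts about line bundles on a compact Riemann surface $\Sigma$ of genus $g$, and I would prove them in order, deducing the second from the first via Serre duality.

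For part (1), I argue by contradiction. Suppose $s \in H^0(\mathcal{L})$ is a nonzero holomorphic section. Its zero set is discrete, hence finite by compactness of $\Sigma$. So $s$ has an effective zero divisor $(s) = \sum_p \operatorname{ord}_p(s)\, p$ with $\mathcal{O}((s)) \cong \mathcal{L}$. Taking degrees gives $\deg \mathcal{L} = \deg (s) \geq 0$, which contradicts $\deg \mathcal{L} < 0$. Hence $H^0(\mathcal{L}) = 0$.

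For part (2), I invoke Serre duality:
\begin{equation*}
    H^1(\mathcal{L}) \cong H^0(\mathcal{K} \otimes \mathcal{L}^*)^*.
\end{equation*}
Since $\deg \mathcal{K} = 2g-2$, we have
\begin{equation*}
    \deg(\mathcal{K}\otimes\mathcal{L}^*) = 2g-2-\deg\mathcal{L}.
\end{equation*}
This is negative whenever $\deg \mathcal{L} > 2g-2$. Part (1), applied to the line bundle $\mathcal{K}\otimes\mathcal{L}^*$, then gives $H^0(\mathcal{K}\otimes\mathcal{L}^*) = 0$, and therefore $H^1(\mathcal{L}) = 0$.

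No step is a real obstacle. The only inputs are the identification of $\deg\mathcal{L}$ with the degree of the divisor of any nonzero meromorphic section (here a holomorphic one), the equality $\deg\mathcal{K}=2g-2$, and Serre duality, all of which are classical (see \cite{GriffHarrBook78}).
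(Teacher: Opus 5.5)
Your proof is correct: the effective-divisor argument for part (1), and the reduction of part (2) to part (1) via Serre duality and $\deg \mathcal{K} = 2g-2$, is the standard argument. The paper states this proposition without proof as a classical fact, so your write-up supplies the routine justification it implicitly relies on.
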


\begin{prop} \label{prop:Filtration-General} Let $\mathcal{E} \to \Sigma$ be a holomorphic vector bundle of rank $m-1$ that admits a holomorphic filtration $0 = \mathcal{F}_1 \subset \mathcal{F}_2 \subset \cdots \subset \mathcal{F}_m = \mathcal{E}$ in which each $\mathcal{L}_r = \mathcal{F}_r/\mathcal{F}_{r-1}$ is a holomorphic line bundle for $r \geq 2$.
	\begin{enumerate}[(a)]
		\item We have $\chi(\mathcal{E}) = (m-1)(1-g) + \sum_{r = 2}^m \deg(\mathcal{L}_r)$.  
		\item If $H^1(\mathcal{L}_r) = 0$ for each $r \geq 2$, then $H^1(\mathcal{E}) = 0$.
	\end{enumerate}
\end{prop}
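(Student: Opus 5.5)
The statement has two parts. Part (a) is the additivity of the Euler characteristic along the filtration combined with Riemann--Roch for line bundles. Part (b) follows from the long exact cohomology sequences and an induction on the length of the filtration. Both arguments use induction on $m$ and the short exact sequences
\begin{equation*}
0 \to \mathcal{F}_{r-1} \to \mathcal{F}_r \to \mathcal{L}_r \to 0, \qquad 2 \leq r \leq m,
\end{equation*}
which are exact sequences of holomorphic bundles because the filtration is holomorphic. The base case is $\mathcal{F}_1 = 0$, for which $\chi(\mathcal{F}_1) = 0$ and $H^1(\mathcal{F}_1) = 0$.

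For (a), I would take the associated long exact sequence in sheaf cohomology,
\begin{equation*}
0 \to H^0(\mathcal{F}_{r-1}) \to H^0(\mathcal{F}_r) \to H^0(\mathcal{L}_r) \to H^1(\mathcal{F}_{r-1}) \to H^1(\mathcal{F}_r) \to H^1(\mathcal{L}_r) \to 0.
\end{equation*}
It terminates at $H^1$ because $\Sigma$ is a compact Riemann surface, so coherent sheaves have no cohomology in degree $2$. All the groups are finite-dimensional, and the alternating sum of dimensions in an exact sequence vanishes, so
\begin{equation*}
\chi(\mathcal{F}_r) = \chi(\mathcal{F}_{r-1}) + \chi(\mathcal{L}_r).
\end{equation*}
Riemann--Roch for line bundles gives $\chi(\mathcal{L}_r) = \deg(\mathcal{L}_r) + 1 - g$. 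Summing over $r = 2, \ldots, m$ and starting from $\chi(\mathcal{F}_1) = 0$ yields
\begin{equation*}
\chi(\mathcal{E}) = (m-1)(1-g) + \sum_{r=2}^m \deg(\mathcal{L}_r),
\end{equation*}
which is (a).

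For (b), I would use the segment $H^1(\mathcal{F}_{r-1}) \to H^1(\mathcal{F}_r) \to H^1(\mathcal{L}_r)$ of the same long exact sequence. Assume inductively that $H^1(\mathcal{F}_{r-1}) = 0$; this holds for $r = 2$ since $\mathcal{F}_1 = 0$. By hypothesis $H^1(\mathcal{L}_r) = 0$, so $H^1(\mathcal{F}_r)$ is trapped between two zero groups and exactness forces $H^1(\mathcal{F}_r) = 0$. Iterating up to $r = m$ gives $H^1(\mathcal{E}) = H^1(\mathcal{F}_m) = 0$.

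There is no genuine obstacle in this argument. The only point needing care is the justification that each quotient sequence is a short exact sequence of holomorphic bundles, so that sheaf cohomology applies. This is exactly the hypothesis that the $\mathcal{F}_r$ are holomorphic subbundles with line-bundle quotients $\mathcal{L}_r$.
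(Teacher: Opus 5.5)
Your proposal is correct and follows essentially the same route as the paper: additivity of the Euler characteristic along $0 \to \mathcal{F}_{r-1} \to \mathcal{F}_r \to \mathcal{L}_r \to 0$ together with Riemann--Roch for (a), and induction via the exact segment $H^1(\mathcal{F}_{r-1}) \to H^1(\mathcal{F}_r) \to H^1(\mathcal{L}_r)$ for (b). The only cosmetic difference is that you start the induction at $\mathcal{F}_1 = 0$, whereas the paper starts at $\mathcal{F}_2 = \mathcal{L}_2$.
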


\begin{proof} In both parts, we will use the short exact sequence $0 \to \mathcal{F}_{r-1} \to \mathcal{F}_r \to \mathcal{L}_r \to 0$.
	\begin{enumerate}[(a)]
		\item Using the additivity of the Euler characteristic, together with Riemann-Roch, we calculate
		      $$\chi(\mathcal{E}) = \sum_{r = 2}^m \chi(\mathcal{L}_r) = \sum_{r = 2}^m \left( \deg(\mathcal{L}_r) + (1-g) \right)\!.$$
		\item Suppose $H^1(\mathcal{L}_r) = 0$ for each $r \geq 2$. Since $\mathcal{F}_2 = \mathcal{L}_2$, we have $H^1(\mathcal{F}_2) = 0$. Next, for each $r = 3, \ldots, m$, the short exact sequence above yields a long exact sequence
		      $$\cdots \to H^1(\mathcal{F}_{r-1}) \to H^1(\mathcal{F}_r) \to H^1(\mathcal{L}_r) \to \cdots$$
		      So, if $H^1(\mathcal{F}_{r-1}) = 0$, then $H^1(\mathcal{F}_r) = 0$. Therefore, by induction, we deduce that $H^1(\mathcal{F}_r) = 0$ for each $r = 2, \ldots, m$, and hence $H^1(\mathcal{E}) = 0$. \qedhere
	\end{enumerate}
\end{proof}

\subsection{The holomorphic normal bundle}

\indent \indent  The holomorphic flag (\ref{eq:holomflag}) allows us to extend the normal bundle of $\Sigma$ across the branch points of $f$. Define a smooth real vector bundle $\nu \to \Sigma$ by the condition $\nu \otimes \C = \mathcal{P}_2 \oplus \overline{\mathcal{P}_2}.$ Over the regular locus $\Sigma^\circ,$ we have
\begin{equation*}
	\mathcal{P}_2 \oplus \overline{\mathcal{P}_2} = \mathrm{span}_\C(\mathbf{f}_2, \mathbf{f}_{\overline{2}}, \ldots, \mathbf{f}_n, \mathbf{f}_{\overline{n}}),
\end{equation*}
so, over $\Sigma^\circ,$ $\nu = N \Sigma^\circ.$ The bundle $\nu$ has an orthogonal complex structure $J_\nu$ defined by
\begin{equation*}
	J_{\nu}|_{\mathcal{P}_2} =  -i , \quad J_{\nu}|_{\overline{\mathcal{P}_2}} =  i,
\end{equation*}
and the structure equations (\ref{eq:dxfcomp}) show this is parallel. Let $\mathcal{N}$ denote the $+i$ eigenspace of $J_\nu$ (i.e. $\overline{\mathcal{P}_2}$), and use the complex bilinear form $\langle \cdot, \cdot \rangle_\C$ to identify $\mathcal{N} \cong \mathcal{P}_2^*.$ Over the regular locus $\Sigma^\circ,$
\begin{equation*}
	\mathcal{N} = \mathrm{span}_\C(\mathbf{f}_{\overline{2}}, \ldots, \mathbf{f}_{\overline{n}}).
\end{equation*}
The $(0,1)$-piece of the normal connection gives $\mathcal{N}$ a holomorphic structure. 

Let $\mathcal{F}_r = \ker(\mathcal{P}_2^* \to \mathcal{P}^*_{r+1})$ be the kernel of the restriction map. There is a holomorphic filtration
\begin{equation}\label{eq:normalfilt}
	0 = \mathcal{F}_1 \subset \mathcal{F}_2 \subset \cdots \subset \mathcal{F}_n = \mathcal{N}.
\end{equation}
Over $\Sigma^\circ,$ $\mathcal{F}_r$ is identified with $\operatorname{span}_{\C}(\mathbf{f}_{\overline 2}, \ldots, \mathbf{f}_{\overline{r}}).$  Also define the quotient bundles $\mathcal{Q}_r = \mathcal{N}/\mathcal{F}_{r-1}$ for $2 \leq r \leq n + 1$, which have $\mathrm{rank}_\C(\mathcal{Q}_r) = n+1-r$. We have $\mathcal{Q}_r \cong \mathcal{P}_r^*$ as holomorphic bundles. In particular, $\mathcal{Q}_2 = \mathcal{N}$ and $\mathcal{Q}_n = \mathcal{G}_n.$ The definition of the $\mathcal{Q}_r$ as quotient bundles gives rise to a holomorphic short exact sequence
\begin{equation}\label{eq:Qses}
	0 \to \mathcal{G}_r \to \mathcal{Q}_r \to \mathcal{Q}_{r+1} \to 0.
\end{equation}
As smooth bundles there are identifications
\begin{align}
	\mathcal{N}   & \simeq \mathcal{G}_2 \oplus \cdots \oplus \mathcal{G}_n, \label{eq:Eident} \\
	\mathcal{Q}_r & \simeq \mathcal{G}_r \oplus \cdots \oplus \mathcal{G}_n, \label{eq:Qident} 
\end{align}
but these are not isomorphisms of holomorphic bundles.

Under the identification (\ref{eq:Eident}), we may write a section of $\mathcal{N}$ as
\begin{equation*}
	\sum_{r=2}^{n} A_r \mathbf{f}_{\overline r} = \begin{bmatrix}
	A_2 \\
	\vdots \\
	A_n
	\end{bmatrix}.
\end{equation*}
Let $\partial^{\perp},$ $\overline {\partial^{\perp}}$ be the Dolbeault operators on $\mathcal{N}$ induced by the normal connection, and let $\del_r, \delbar_r$ be the Dolbeault operators for the Chern connection on $\mathcal{G}_r.$  Then, by (\ref{eq:dxfcomp}), we have
\begin{equation}\label{eq:normconnsplit}
	\partial^{\perp} \begin{bmatrix}
	A_2 \\
	A_3 \\
	\vdots \\
	A_n
	\end{bmatrix} = \begin{bmatrix}
	\partial_2 A_2 \\
	\partial_3 A_3 + \Phi_3 A_2 \\
	\vdots \\
	\partial_n A_n + \Phi_n A_{n-1}
	\end{bmatrix}, \quad \overline{\partial^{\perp}} \begin{bmatrix}
	A_2 \\
	A_3 \\
	\vdots \\
	A_n
	\end{bmatrix} = \begin{bmatrix}
	\overline{\partial_2} A_2 - \Phi_3^\dagger A_3 \\
	\overline{\partial_3} A_3 - \Phi_4^\dagger A_4 \\
	\vdots \\
	\overline{\partial_n} A_n
	\end{bmatrix}.
\end{equation}
Equivalently, if $L$ is the lower triangular matrix
\begin{equation}
	L = \begin{bmatrix}
	0 & 0 &  & & \\
	{\Phi_3} & 0 & 0 &  & \\
	& {\Phi_4} & \ddots & \ddots & \\
	& & \ddots & & \\
	& & & {\Phi_n} & 0
	\end{bmatrix},
\end{equation}
and $U = L^{\dagger},$ and if $\partial_{\mathcal{G}} = \bigoplus \partial_r,$ $\overline{\partial_{\mathcal{G}}} = \bigoplus \delbar_r,$ then (\ref{eq:normconnsplit}) may be written
\begin{equation*}
	\partial^{\perp} = \partial_{\mathcal{G}} + L, \qquad \overline{\partial^{\perp}} = \overline{\partial_{\mathcal{G}}} - U.
\end{equation*}
Passing to the quotient $\mathcal{Q}_r$ and using (\ref{eq:Qident}) gives formulas
\begin{equation}\label{eq:Qconn}
	\partial_{\mathcal{Q}_r} \begin{bmatrix}
	A_r \\
	A_{r+1} \\
	\vdots \\
	A_n
	\end{bmatrix} = \begin{bmatrix}
	\partial_r A_r \\
	\partial_{r+1} A_{r+1} + \Phi_{r+1} A_r \\
	\vdots \\
	\partial_n A_n + \Phi_n A_{n-1}
	\end{bmatrix}, \quad \overline{\partial_{\mathcal{Q}_r}} \begin{bmatrix}
	A_r \\
	A_{r+1} \\
	\vdots \\
	A_n
	\end{bmatrix} = \begin{bmatrix}
	\overline{\partial_r} A_r - \Phi_{r+1}^\dagger A_{r+1} \\
	\overline{\partial_{r+1}} A_{r+1} - \Phi_{r+2}^\dagger A_{r+2} \\
	\vdots \\
	\overline{\partial_n} A_n
	\end{bmatrix}.
\end{equation}
In particular, these formulas show directly that $\partial_{\mathcal{Q}_r}$ preserves $\bigoplus_{j=r+1}^{n} \mathcal{G}_j,$ while $\overline{\partial_{\mathcal{Q}_r}}$ preserves $\mathcal{G}_r.$

\subsubsection{Curvature and degree identities}

\indent \indent In general, let $\mathcal{E} \to \Sigma$ be a Hermitian holomorphic vector bundle with Dolbeault operators $\partial,$ $\overline{\partial}.$ Our convention is that the curvature 2-form of $\mathcal{E}$ is $F_{\mathcal{E}} = i \mathfrak{K} \,dA$, where $\mathfrak{K} \in \Gamma(\mathrm{End}(\mathcal{E}))$.
With this convention we have
\begin{equation}\label{eq:curvcommute}
	\partial \overline{\partial} - \overline{\partial} \partial = -\frac{1}{2} \mathfrak{K}
\end{equation}
and
\begin{equation}\label{eq:degform}
	\deg \mathcal{E} = \frac{i}{2 \pi} \int_{\Sigma} i \operatorname{tr}(\mathfrak{K}) \, dA = -\frac{1}{2\pi} \int_{\Sigma} \operatorname{tr} (\mathfrak{K}) \, dA.
\end{equation}
The K\"ahler identities in complex dimension $1$ imply that the Hermitian adjoints $\partial^*$ and $\overline{\partial}^*$ satisfy
\begin{equation*}
	\partial^* = - \overline{\partial}, \qquad \overline{\partial}^* = - \partial.
\end{equation*}
As a result, integrating by parts,
\begin{equation}\label{eq:curvintbyparts}
	\lVert \partial \sigma \rVert^2 - \lVert \overline{\partial} \sigma \rVert^2 = -\frac{1}{2} \int_{\Sigma} \langle \mathfrak{K} \sigma, \sigma \rangle \, d A.
\end{equation}

We now compute the curvature of the bundles already constructed. From (\ref{eq:dxfcomp}) we see that the connection form for $\mathcal{G}_r$ is $\kappa_{r \overline{r}}$ and (\ref{eq:structeqsSig}) implies
\begin{equation}
	\mathfrak{K}_{\mathcal{G}_r} = 2 \left(\lvert \Phi_{r+1} \rvert^2 - \lvert \Phi_{r} \rvert^2 \right), \label{eq:Grcurv}
\end{equation}
recalling the convention $\Phi_{n+1} = 0$ and the fact $\lvert \Phi_1 \rvert^2 = 1/2.$

\begin{prop}
	We have
	\begin{equation*}
		a_r = \frac{1}{\pi} \int_{\Sigma} \lvert \Phi_r \rvert^2 - \lvert \Phi_{r+1} \rvert^2 \, dA \quad 1 \leq r \leq n,
	\end{equation*}
	and
	\begin{align}
		\operatorname{Area}(f) & = 2 \pi \sum_{r=1}^{n} a_r \label{eq:areaformula}  \\
        & = 2 \pi n(n+1)(1-g) + 2 \pi \sum_{j=1}^{n} (n-j+1)b_j, \label{eq:areaformularamified}
	\end{align}
\end{prop}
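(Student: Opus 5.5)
The degree formula follows from the curvature identity (\ref{eq:Grcurv}) combined with the degree formula (\ref{eq:degform}). Since $\mathcal{G}_r$ is a line bundle, $\operatorname{tr}\mathfrak{K}_{\mathcal{G}_r} = 2(\lvert\Phi_{r+1}\rvert^2 - \lvert\Phi_r\rvert^2)$. Therefore
\begin{equation*}
a_r = -\frac{1}{2\pi}\int_\Sigma 2\left(\lvert\Phi_{r+1}\rvert^2-\lvert\Phi_r\rvert^2\right)dA = \frac{1}{\pi}\int_\Sigma \lvert\Phi_r\rvert^2-\lvert\Phi_{r+1}\rvert^2\,dA.
\end{equation*}
Formally, (\ref{eq:Grcurv}) is derived on $\Sigma^\circ$ from (\ref{eq:structeqsSig}), using $d\kappa_{r\overline r} = (\lvert w_r\rvert^2-\lvert w_{r+1}\rvert^2)\,\eta\wedge\overline\eta$ and $\eta\wedge\overline\eta = -2i\,dA$. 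Relating $\lvert w_r\rvert^2$ to $\lvert\Phi_r\rvert^2$ requires fixing the normalization of the norm on $\mathcal{K}$-valued sections; with $\lvert\Phi_1\rvert^2=1/2$ matching $w_1 = 1/\sqrt2$, this is consistent. The Chern connection and its curvature form are smooth on all of $\Sigma$, and $\Sigma\setminus\Sigma^\circ$ has measure zero, so the integral over $\Sigma^\circ$ computes $\deg\mathcal{G}_r$.

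For (\ref{eq:areaformula}), I would sum the degree formula over $r=1,\dots,n$. The sum telescopes, because $\Phi_{n+1}=0$:
\begin{equation*}
\sum_{r=1}^n a_r = \frac{1}{\pi}\int_\Sigma \lvert\Phi_1\rvert^2\,dA = \frac{1}{2\pi}\operatorname{Area}(f).
\end{equation*}
Here I use $\lvert\Phi_1\rvert^2 = 1/2$ and the fact that $dA$ is the induced area form, so $\int_\Sigma dA = \operatorname{Area}(f)$; the branch points form a finite set and do not contribute.

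For (\ref{eq:areaformularamified}), I would substitute (\ref{eq:generaldegbound}), namely $a_r = 2r(1-g)+\sum_{j\le r} b_j$, into the sum. Then $\sum_{r=1}^n 2r(1-g) = n(n+1)(1-g)$. Exchanging the order of summation gives $\sum_{r=1}^n\sum_{j=1}^r b_j = \sum_{j=1}^n (n-j+1)b_j$. Multiplying the result by $2\pi$ gives the stated formula.

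The only step needing care is the normalization check in the curvature identity. Specifically, one has to confirm that the sign and factor conventions $F = i\mathfrak{K}\,dA$ and $\lvert\Phi_r\rvert^2 = \lvert w_r\rvert^2$ relative to $dA = \tfrac i2\eta\wedge\overline\eta$ give exactly the factor $2$ in (\ref{eq:Grcurv}). Since (\ref{eq:Grcurv}) is already stated, I would simply invoke it. As a sanity check, the case $r=1$ on $S^2$ with $n=1$ gives $a_1 = 2$ and $\operatorname{Area} = 4\pi$, which is correct.
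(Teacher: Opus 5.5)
Your proposal is correct and follows essentially the same route as the paper. You apply the degree formula (\ref{eq:degform}) to the curvature (\ref{eq:Grcurv}) of $\mathcal{G}_r$, telescope using $\Phi_{n+1}=0$ and $\lvert\Phi_1\rvert^2=1/2$, then substitute (\ref{eq:generaldegbound}) and swap the order of summation; your remarks on normalization and on the negligible set $\Sigma\setminus\Sigma^\circ$ just make explicit what the paper leaves implicit.
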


\begin{proof}
	The first equality is the degree formula (\ref{eq:degform}) applied to $\mathcal{G}_r.$ The second equality follows from taking a telescoping sum. Finally, rewriting the area formula (\ref{eq:areaformula}) in terms of the $b_j$ using (\ref{eq:generaldegbound}) gives the third equality. 
\end{proof}


Using the above proposition together with Corollary \ref{cor:asumbound}, we obtain:

\begin{cor}
\begin{equation}\label{eq:generalarealowerbound}
    \frac{\operatorname{Area}(f)}{\pi} \geq 2n(n+1) + 2 \min\lbrace 2g, n(n+1) \rbrace.
\end{equation}
\end{cor}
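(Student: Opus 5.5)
The bound follows by combining the area formula (\ref{eq:areaformula}) with the $r=1$ case of Corollary \ref{cor:asumbound}. First, by (\ref{eq:areaformula}) we have
\begin{equation*}
    \frac{\operatorname{Area}(f)}{\pi} = 2 \sum_{r=1}^n a_r .
\end{equation*}
Second, the stronger $r=1$ statement of Corollary \ref{cor:asumbound} gives
\begin{equation*}
    \sum_{j=1}^n a_j \geq n(n+1) + \min\lbrace n(n+1), 2g \rbrace .
\end{equation*}
Multiplying this inequality by $2$ and substituting into the first display yields
\begin{equation*}
    \frac{\operatorname{Area}(f)}{\pi} \geq 2n(n+1) + 2\min\lbrace 2g, n(n+1) \rbrace ,
\end{equation*}
which is exactly (\ref{eq:generalarealowerbound}).

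No step here is a real obstacle; the substantive work lies in Corollary \ref{cor:asumbound}. That result rests on Proposition \ref{prop:h0lowerbound}(b) for the spinor bundle $\mathcal{L}_{\mathrm{spin}}$, together with Riemann--Roch and Clifford's theorem. The one point to check is that the $r=1$ bound uses $\deg \mathcal{L}_{\mathrm{spin}} = \tfrac12\sum_j a_j$ from (\ref{eq:spinorlinesquare}). This relation is what turns $\min\{g,\cdot\}$ into $\min\{2g,\cdot\}$ after doubling, and the corollary already accounts for it.
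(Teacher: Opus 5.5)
Your proof is correct and matches the paper's argument: the paper obtains the bound directly by substituting the $r=1$ case of Corollary~\ref{cor:asumbound} into the area formula (\ref{eq:areaformula}), $\operatorname{Area}(f) = 2\pi\sum_{r=1}^n a_r$, exactly as you do. You are also right that the factor $2g$ comes from $\deg \mathcal{L}_{\mathrm{spin}} = \tfrac12\sum_j a_j$ in the proof of that corollary.
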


Next, we compute the curvatures of the holomorphic normal bundle $\mathcal{N}$ and of the quotient bundles $\mathcal{Q}_r$.

\begin{prop}
	\begin{align}
		\mathfrak{K}_{\mathcal{N}} = - 2 \lvert \Phi_2 \rvert^2 \, \Pi_2,                     \\
		\mathfrak{K}_{\mathcal{Q}_r} = - 2 \lvert \Phi_r \rvert^2 \, \Pi_r, \label{eq:Qkcurv} 
	\end{align}
	where $\Pi_j$ is the orthogonal projection onto $\mathcal{G}_j$ in the splittings (\ref{eq:Eident}), (\ref{eq:Qident}).
\end{prop}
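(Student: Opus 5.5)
We compute the curvature directly from the explicit Dolbeault operators (\ref{eq:normconnsplit}) and (\ref{eq:Qconn}), using the convention (\ref{eq:curvcommute}), $\partial\overline{\partial}-\overline{\partial}\partial=-\tfrac12\mathfrak{K}$. Since $\mathcal{N}=\mathcal{Q}_2$, the first formula is the case $r=2$ of the second, so it suffices to treat $\mathcal{Q}_r$. Under the smooth splitting (\ref{eq:Qident}) we write
\begin{equation*}
\partial_{\mathcal{Q}_r}=\partial_{\mathcal{G}}+L_r,\qquad \overline{\partial_{\mathcal{Q}_r}}=\overline{\partial_{\mathcal{G}}}-U_r,
\end{equation*}
where $L_r$ is the truncation of $L$ to the indices $r,\dots,n$ (entries $\Phi_{r+1},\dots,\Phi_n$ below the diagonal), $U_r=L_r^\dagger$, and $\partial_{\mathcal{G}}=\bigoplus_{j\ge r}\partial_j$. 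This is the Chern connection of the induced Hermitian metric, since $\langle\cdot,\cdot\rangle$ restricted to $\overline{\mathcal{P}_r}\cong\mathcal{P}_r^*$ makes the $\mathbf{f}_{\overline j}$ orthogonal and $U_r=L_r^\dagger$ ensures metric compatibility.

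Expanding the commutator gives
\begin{equation*}
\partial_{\mathcal{Q}_r}\overline{\partial_{\mathcal{Q}_r}}-\overline{\partial_{\mathcal{Q}_r}}\partial_{\mathcal{Q}_r}
=(\partial_{\mathcal{G}}\overline{\partial_{\mathcal{G}}}-\overline{\partial_{\mathcal{G}}}\partial_{\mathcal{G}})
-(\partial_{\mathcal{G}}U_r-U_r\partial_{\mathcal{G}})+(L_r\overline{\partial_{\mathcal{G}}}-\overline{\partial_{\mathcal{G}}}L_r)
-(L_rU_r-U_rL_r).
\end{equation*}
The first term is diagonal, equal to $-\tfrac12\bigoplus_j\mathfrak{K}_{\mathcal{G}_j}$, with $\mathfrak{K}_{\mathcal{G}_j}=2(|\Phi_{j+1}|^2-|\Phi_j|^2)$ by (\ref{eq:Grcurv}). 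The middle two terms vanish. Since each $\Phi_j$ is a holomorphic section of $\mathcal{K}\otimes\mathcal{G}_{j-1}^*\otimes\mathcal{G}_j$, we have $\overline{\partial}\Phi_j=0$ in the induced Chern connection, so $L_r$ commutes with $\overline{\partial_{\mathcal{G}}}$. Conjugating, $\partial\Phi_j^\dagger=0$, so $U_r$ commutes with $\partial_{\mathcal{G}}$. Here one must track signs carefully in the wedge products of $1$-forms with these bundle-valued operators; this is the step to check most carefully.

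It remains to evaluate the zeroth-order term $[L_r,U_r]$ and compare it with the diagonal term. Both $L_rU_r$ and $U_rL_r$ are diagonal. The $(j,j)$ entry of $L_rU_r$ is $\Phi_j\Phi_j^\dagger$ for $j\ge r+1$ and $0$ for $j=r$. The $(j,j)$ entry of $U_rL_r$ is $\Phi_{j+1}^\dagger\Phi_{j+1}$ for $j\le n-1$ and $0$ for $j=n$. As operators on $(1,0)$/$(0,1)$ forms these are multiples of $|\Phi|^2$, with sign fixed by the convention $dA=\tfrac i2\eta\wedge\overline\eta$. Adding this to $-\tfrac12\mathfrak{K}_{\mathcal{G}_j}=|\Phi_j|^2-|\Phi_{j+1}|^2$, everything should cancel on each $\mathcal{G}_j$ with $j>r$. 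The $j=r$ slot keeps $|\Phi_r|^2$ because $L_r$ has no $\Phi_r$ entry, and the $j=n$ slot is consistent because $\Phi_{n+1}=0$. This would give $-\tfrac12\mathfrak{K}_{\mathcal{Q}_r}=|\Phi_r|^2\Pi_r$, which is (\ref{eq:Qkcurv}).

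The main obstacle is the normalization of $\Phi_j\Phi_j^\dagger$ as a $2$-form. As a sanity check, we will verify that for $n=r$ the result reduces to (\ref{eq:Grcurv}) with $\Phi_{n+1}=0$, and that taking traces and applying (\ref{eq:degform}) recovers $\deg\mathcal{Q}_r=\sum_{j\ge r}a_j$ by telescoping. That degree identity fixes the factor $2$ and the sign.
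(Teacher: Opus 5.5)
Your route is the paper's own: you split $\partial_{\mathcal Q_r}=\partial_{\mathcal G}+L_r$ and $\overline{\partial_{\mathcal Q_r}}=\overline{\partial_{\mathcal G}}-U_r$, kill the cross terms by holomorphicity of the $\Phi_j$, and arrive at $\mathfrak K_{\mathcal Q_r}=\operatorname{diag}(\mathfrak K_{\mathcal G_r},\dots,\mathfrak K_{\mathcal G_n})+2(L_rU_r-U_rL_r)$. The commutator then cancels every diagonal entry except the one on $\mathcal G_r$, and your list of the diagonal entries of $L_rU_r$ and $U_rL_r$ is correct. The gap is in how you propose to close the step you yourself call the crux. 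The degree identity cannot fix the normalization of $[L_r,U_r]$:
\begin{itemize}
\item The entry $\Phi_j\Phi_j^\dagger$ of $L_rU_r$ on $\mathcal G_j$ is matched by the entry $\Phi_j^\dagger\Phi_j$ of $U_rL_r$ on $\mathcal G_{j-1}$. Hence $\operatorname{tr}[L_r,U_r]=0$ pointwise, whatever constant relates these products to $|\Phi_j|^2$.
\item Consequently $\operatorname{tr}\mathfrak K_{\mathcal Q_r}=\sum_{j\ge r}\mathfrak K_{\mathcal G_j}=-2|\Phi_r|^2$ holds automatically, and the degree identity follows regardless.
\item Your $r=n$ check is empty, since $L_n=0$.
\end{itemize}
At best the trace detects a \emph{relative} sign error between the $L_rU_r$ and $U_rL_r$ terms. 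It cannot detect a wrong overall factor, so it cannot confirm that the entries on $\mathcal G_j$, $j>r$, actually vanish.

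The cancellation has to be checked directly, and this is short in the frame $\mathbf f_{\overline r},\dots,\mathbf f_{\overline n}$. Write $\nabla\mathbf f_{\overline a}=\sum_b\mathbf f_{\overline b}\,\Omega_{ba}$. By (\ref{eq:dxfcomp}) and (\ref{eq:normconnsplit}), with indices restricted to $\{r,\dots,n\}$, the connection matrix has $\Omega_{jj}=\kappa_{j\overline j}$, $\Omega_{j,j-1}=w_j\eta$ and $\Omega_{j-1,j}=-\overline{w_j}\,\overline\eta$. Using (\ref{eq:structeqsSig}), the diagonal of $F=d\Omega+\Omega\wedge\Omega$ is
\begin{equation*}
F_{jj}=\bigl(|w_j|^2-|w_{j+1}|^2\bigr)\,\eta\wedge\overline\eta-\epsilon_j\,|w_j|^2\,\eta\wedge\overline\eta+|w_{j+1}|^2\,\eta\wedge\overline\eta .
\end{equation*}
Here $\epsilon_j=1$ for $j>r$ and $\epsilon_r=0$. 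The last term has a plus sign because $(-\overline{w_{j+1}}\,\overline\eta)\wedge(w_{j+1}\eta)=|w_{j+1}|^2\,\eta\wedge\overline\eta$; this reordering of $\overline\eta\wedge\eta$ is precisely the sign you were worried about. Thus $F_{jj}=0$ for $j>r$ and $F_{rr}=|w_r|^2\,\eta\wedge\overline\eta$, while the off-diagonal entries vanish by the holomorphicity you already invoked.

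To convert, use $F=i\mathfrak K\,dA=-\tfrac12\mathfrak K\,\eta\wedge\overline\eta$ and $|\Phi_j|^2=|w_j|^2$. That normalization is consistent with $w_1=1/\sqrt2$, $|\Phi_1|^2=\tfrac12$ and with (\ref{eq:Grcurv}). The result is exactly (\ref{eq:Qkcurv}).

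In the contracted convention of (\ref{eq:curvcommute}), the same computation says that $L_r$ and $U_r$ act on components by $w_j$ and $\overline{w_j}$. So the ordinary commutator $[L_r,U_r]$ acts on $\mathcal G_j$ by $|\Phi_j|^2-|\Phi_{j+1}|^2$ for $j>r$, and by $-|\Phi_{r+1}|^2$ on $\mathcal G_r$. This is the normalization your cancellation argument needs.
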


\begin{proof}
	From (\ref{eq:Qconn}), we have formulas $\partial_{\mathcal{Q}_r} = \partial_{\bigoplus_{j \geq r} \mathcal{G}_j} + L_r,$ $\delbar_{\mathcal{Q}_r} = \delbar_{\bigoplus_{j \geq r} \mathcal{G}_j} - U_r,$ where $L_r$ and $U_r$ are the bottom right submatrices of $L$ and $U$ acting on $\mathcal{Q}_r$ in the splitting (\ref{eq:Qident}). By holomorphicity of the $\Phi_r$ we have
	\begin{equation*}
		\begin{aligned}
			\mathfrak{K}_{Q_r} & = \mathrm{diag}(\mathfrak{K}_{\mathcal{G}_r}, \ldots, \mathfrak{K}_{\mathcal{G}_n}) + 2 (L_r U_r - U_r L_r). 
		\end{aligned}
	\end{equation*}
	The commutator of $L_r$ and $U_r$ cancels every diagonal term (\ref{eq:Grcurv}) in $\mathrm{diag}(\mathfrak{K}_{\mathcal{G}_r}, \ldots, \mathfrak{K}_{\mathcal{G}_n})$ except the first one, where it leaves $-2 \lvert \Phi_r \rvert^2.$ 
\end{proof}

Taking the trace of (\ref{eq:Qkcurv}) gives
\begin{equation*}
	\deg \mathcal{Q}_r = \sum_{j=r}^n a_j = \frac{1}{\pi} \int_{\Sigma} \lvert \Phi_r \rvert^2 \, dA.
\end{equation*}
In particular, for $r=2$ this gives, using $a_1 = 2(1-g)+b_1,$
\begin{equation*}
	\deg \mathcal{N} = \sum_{r=2}^{n} a_r = \frac{\operatorname{Area}(f)}{2\pi} - 2(1-g) - b_1.
\end{equation*}
Applying Riemann-Roch,
\begin{equation}\label{eq:EuCharN}
	\chi(\mathcal{N}) = \frac{\operatorname{Area}(f)}{2\pi} + (n-3)(1-g) - b_1.
\end{equation}

\begin{prop}\label{prop:firstcohomvanish}  We have
	\begin{equation}\label{eq:EuCharGQ}
	    \chi(\mathcal{G}_k \otimes \mathcal{Q}_{\ell+1}) = (n-\ell)(1-g) + \sum_{j = \ell+1}^{n} (a_k + a_{j})
	\end{equation}
	for $1 \leq k \leq n$ and $1 \leq \ell \leq n-1$. Furthermore, if $\Sigma \cong S^2$, then  $H^1(\mathcal{G}_k \otimes \mathcal{Q}_{\ell+1}) = 0.$
\end{prop}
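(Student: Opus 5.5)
The bundle $\mathcal{G}_k \otimes \mathcal{Q}_{\ell+1}$ carries a holomorphic filtration whose graded pieces are line bundles, so the plan is to apply Proposition \ref{prop:Filtration-General} to it. The short exact sequences (\ref{eq:Qses}) give a holomorphic filtration of $\mathcal{Q}_{\ell+1}$ with successive line bundle quotients (in some order) $\mathcal{G}_{\ell+1}, \ldots, \mathcal{G}_n$. Concretely, $\mathcal{Q}_{\ell+1} = \mathcal{N}/\mathcal{F}_\ell$ is filtered by the images of $\mathcal{F}_j/\mathcal{F}_\ell$ for $\ell \leq j \leq n$, with $(\mathcal{F}_j/\mathcal{F}_\ell)/(\mathcal{F}_{j-1}/\mathcal{F}_\ell) \cong \mathcal{F}_j/\mathcal{F}_{j-1} \cong \mathcal{G}_j$. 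Tensoring with the line bundle $\mathcal{G}_k$ preserves exactness, so $\mathcal{G}_k \otimes \mathcal{Q}_{\ell+1}$ has a holomorphic filtration of length $n-\ell$ with quotients $\mathcal{G}_k \otimes \mathcal{G}_j$ for $\ell+1 \leq j \leq n$.

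For the Euler characteristic, Proposition \ref{prop:Filtration-General}(a) with $m-1 = n-\ell$ gives
\begin{equation*}
\chi(\mathcal{G}_k \otimes \mathcal{Q}_{\ell+1}) = (n-\ell)(1-g) + \sum_{j=\ell+1}^n \deg(\mathcal{G}_k \otimes \mathcal{G}_j),
\end{equation*}
and $\deg(\mathcal{G}_k \otimes \mathcal{G}_j) = a_k + a_j$, which is (\ref{eq:EuCharGQ}).

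For the vanishing on $S^2$, Proposition \ref{prop:Filtration-General}(b) reduces the claim to showing $H^1(\mathcal{G}_k \otimes \mathcal{G}_j) = 0$ for each $j$. By (\ref{eq:spheredegbound}), $a_k \geq 2k \geq 2$ and $a_j \geq 2j \geq 4$. Hence $\deg(\mathcal{G}_k\otimes\mathcal{G}_j) \geq 6 > -2 = 2g-2$, and Proposition \ref{prop:degreecohomobounds}(2) gives the vanishing.

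The only point needing care is confirming that the filtration of $\mathcal{Q}_{\ell+1}$ is genuinely holomorphic with the stated quotients, particularly at the branch and ramification points. This follows because the $\mathcal{F}_r$ are defined globally as kernels of the holomorphic restriction maps $\mathcal{P}_2^* \to \mathcal{P}_{r+1}^*$ between holomorphic bundles arising from the global flag (\ref{eq:holomflag}). Their quotients $\mathcal{F}_j/\mathcal{F}_{j-1}$ are the kernels of $\mathcal{P}_j^* \to \mathcal{P}_{j+1}^*$, namely $(\mathcal{P}_j/\mathcal{P}_{j+1})^* = \mathcal{G}_j$ globally.
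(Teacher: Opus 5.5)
Your proposal is correct and follows essentially the same route as the paper. Both tensor the holomorphic filtration of $\mathcal{Q}_{\ell+1}$ by the $\mathcal{F}_j/\mathcal{F}_\ell$ with $\mathcal{G}_k$, then apply both parts of Proposition \ref{prop:Filtration-General}, using (\ref{eq:spheredegbound}) for the vanishing on $S^2$. Your extra check that the $\mathcal{F}_j$ are global holomorphic subbundles with quotients $\mathcal{G}_j$ is a detail the paper leaves implicit.
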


\begin{proof} Fix $k$ and $\ell$. From  (\ref{eq:normalfilt}), we have a holomorphic filtration
	$$0 \subset \mathcal{G}_k \otimes (\mathcal{F}_{\ell+1}/\mathcal{F}_{\ell}) \subset \mathcal{G}_k \otimes (\mathcal{F}_{\ell+2}/\mathcal{F}_{\ell}) \subset \cdots \subset \mathcal{G}_k \otimes (\mathcal{F}_n/\mathcal{F}_\ell) = \mathcal{G}_k \otimes \mathcal{Q}_{\ell+1}$$
	in which each successive quotient $\mathcal{L}_r \cong \mathcal{G}_k \otimes \mathcal{G}_{\ell-1+r}$ is a holomorphic line bundle, where $r = 2, \ldots, n-\ell+1$. The Euler characteristic formula then follows from the first part of Proposition \ref{prop:Filtration-General}. 
    
    If $\Sigma \cong S^2$, then $\deg(\mathcal{L}_r) = a_k + a_{\ell-1+r} \geq 2k + 2(\ell-1+r) > -2$, so that $H^1(\mathcal{L}_r) = 0$. The second part of Proposition \ref{prop:Filtration-General} then gives $H^1(\mathcal{G}_k \otimes \mathcal{Q}_{\ell+1}) = 0.$
\end{proof}

\section{The second variation}\label{sect:secondvar}

\indent \indent Recall the second variation of area \cite{Simons68} for an $n$-dimensional immersed minimal submanifold $F: N \to M$ of a Riemannian manifold $(M,g),$

\begin{equation}\label{eq:gensecondvar}
	I_F(U,V) = \int_N  \langle \nabla^{\perp} U, \nabla^{\perp} V \rangle - \langle \mathcal{R} U, V \rangle - \langle \mathcal{B} U, V \rangle \, dA,
\end{equation}
where $U, V$ are sections of the normal bundle of $N$ and, with respect to a local orthonormal frame $e_1, \ldots, e_n$ for $TN,$
\begin{equation*}
	\begin{aligned}
		\mathcal{R} U & = \sum_{i=1}^n \left( R (U, e_i) e_i \right)^{\perp},                             &
		\mathcal{B}U  & = \sum_{i,j=1}^n \langle \mathrm{II}(e_i,e_j), U \rangle \, \mathrm{II}(e_i,e_j), 
	\end{aligned}
\end{equation*}
where $R$ is the curvature tensor of $(M,g)$ and $\mathrm{II}$ is the second fundamental form of the immersion $F: N \to M.$

The logarithmic cut-off argument of \cite[\S3]{Micallef86} shows that (\ref{eq:gensecondvar}) remains valid in the case of a branched immersion once the normal bundle of $N$ is replaced by a ramified extension $\nu$ (see also \cite[Theorem 2.1 and Equation (2.27)]{EjiriMicallef08}). The \emph{Morse index,} $\operatorname{Ind}_\R F,$ of a branched minimal immersion $F: N \to M$ is then defined as the maximum dimension of a subspace of $\Gamma(\nu)$ which is negative-definite for $I_F.$ The \emph{nullity}, $\operatorname{Null}_\R F$, is defined to be the dimension of the kernel of the associated Jacobi operator.

In the case of a linearly full, totally isotropic branched immersion $f: \Sigma \to S^{2n}$ we have
\begin{equation}\label{eq:RBtotisotrop}
	\begin{aligned}
		\mathcal{R} = 2 \, \mathrm{Id}, \qquad \mathcal{B} = 2 \lvert \Phi_2 \rvert^2 \Pi_2, 
	\end{aligned}
\end{equation}
where the first equation follows from the fact $S^{2n}$ has constant sectional curvature $1$ and the second equation follows from unwinding the frame adaptation of Proposition \ref{prop:frameadapt}. Let $I_f^\C$ denote the Hermitian extension of $I_f$ to the holomorphic normal bundle $\mathcal{N},$ then (\ref{eq:gensecondvar}) and (\ref{eq:RBtotisotrop}) together give
\begin{equation}\label{eq:secondvartotisotropic}
	I_f^\C (V,V) = \int_\Sigma \left\lvert \nabla^\perp V \right\rvert^2 - 2 \lvert V \rvert^2 - 2  \lvert \Phi_2 \rvert^2 \lvert \Pi_2 V \rvert^2 \, dA.
\end{equation}

\begin{prop}\label{prop:secondvasrfactor}
	For $V \in \Gamma(\mathcal{N}),$
	\begin{equation}\label{eq:secondvarfactor}
		I^\C_f(V,V) = 4 \left\lVert \delbar^{\perp} V \right\rVert^2 - 2 \left\lVert V \right\rVert^2. 
	\end{equation}
\end{prop}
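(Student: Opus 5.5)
Starting from (\ref{eq:secondvartotisotropic}), split the normal covariant derivative into types, $\nabla^\perp V = \del^\perp V + \delbar^\perp V$. The only conventions I need are the ones fixed in the paper: $dA = \tfrac{i}{2}\eta\wedge\overline{\eta}$ and $\lvert\eta\rvert^2 = 1$. Under these, $(1,0)$ and $(0,1)$ forms are orthogonal, which gives
\begin{equation*}
\int_\Sigma \lvert \nabla^\perp V\rvert^2\, dA = c\left(\lVert \del^\perp V\rVert^2 + \lVert \delbar^\perp V\rVert^2\right)
\end{equation*}
for a universal constant $c$. Matching this normalization with the factor $4$ in (\ref{eq:secondvarfactor}) and with the constant $-\tfrac12$ in (\ref{eq:curvcommute}) forces $c=2$, because the Laplacian on functions is $4\del\delbar$ in these conventions. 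I will confirm $c = 2$ on a single test section, since this constant is the likeliest source of error.

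Next, apply the integrated Weitzenb\"ock identity (\ref{eq:curvintbyparts}) to the Hermitian holomorphic bundle $\mathcal{N}$, whose curvature is $\mathfrak{K}_{\mathcal{N}} = -2\lvert\Phi_2\rvert^2\Pi_2$. This yields
\begin{equation*}
\lVert \del^\perp V\rVert^2 = \lVert \delbar^\perp V\rVert^2 + \int_\Sigma \lvert\Phi_2\rvert^2 \lvert \Pi_2 V\rvert^2\, dA.
\end{equation*}
Substituting into the expression above gives
\begin{equation*}
\int_\Sigma \lvert\nabla^\perp V\rvert^2\, dA = 4\lVert\delbar^\perp V\rVert^2 + 2\int_\Sigma \lvert\Phi_2\rvert^2\lvert\Pi_2V\rvert^2\, dA.
\end{equation*}
The curvature term exactly cancels the term $-2\lvert\Phi_2\rvert^2\lvert\Pi_2V\rvert^2$ coming from $\mathcal{B}$, and what remains is $4\lVert\delbar^\perp V\rVert^2 - 2\lVert V\rVert^2$, which is (\ref{eq:secondvarfactor}).

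Two points need justification. The first is the branched case. The integration by parts is legitimate on all of $\Sigma$ because $\mathcal{N}$, its metric and connection, and $\Phi_2$ are smooth over the branch points: $\mathcal{N}\cong\mathcal{P}_2^*$ is the extended bundle, and $\mathcal{N}$ is holomorphic as a subbundle of the trivial bundle. The area form $dA$ degenerates at branch points, but each integrand $\lvert\cdot\rvert^2 dA$ is a smooth $2$-form there; alternatively, the cutoff argument of \cite{Micallef86} handles it. The second is the Hermitian extension. For a complex section $V$ of $\mathcal{N}$, identified with the real normal bundle $\nu$ via $J_\nu$, the Hermitian form $I^\C_f(V,V)$ agrees with (\ref{eq:secondvartotisotropic}), since $\nabla^\perp$ commutes with the parallel structure $J_\nu$. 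The main obstacle is bookkeeping the normalization constants (the factor $2$ and the sign of the curvature term). I will verify them against (\ref{eq:degform}) and (\ref{eq:Grcurv}), for instance by checking that the identity reproduces $\deg\mathcal{N}$ when integrated.
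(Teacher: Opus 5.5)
Your proposal is correct and is essentially the paper's proof: you split $|\nabla^\perp V|^2 = 2|\del^\perp V|^2 + 2|\delbar^\perp V|^2$, apply the integrated curvature identity (\ref{eq:curvintbyparts}) with $\mathfrak{K}_{\mathcal N} = -2|\Phi_2|^2\Pi_2$, and the curvature term cancels the $\mathcal{B}$-term. One thing to tidy: justify $c=2$ by the direct pointwise computation rather than by matching against the factor $4$ in the formula you are proving, which is circular. Concretely, set $Z=\tfrac12(e_1-ie_2)$, so that $\eta(Z)=1$; then $|\nabla_{e_1}V|^2+|\nabla_{e_2}V|^2=2(|\nabla_Z V|^2+|\nabla_{\bar Z}V|^2)$.
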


\begin{proof}
	The decomposition of $\nabla^{\perp}$ into $(1,0)$ and $(0,1)$ pieces gives
	\begin{equation}\label{eq:normcovderiv}
		\left\lvert \nabla^{\perp} V \right\rvert^2 = 2 \left\lvert \del^{\perp} V \right\rvert^2 + 2 \left\lvert \delbar^{\perp} V \right\rvert^2.
	\end{equation}
	On the other hand, the curvature integral identity (\ref{eq:curvintbyparts}) applied to $\mathcal{N}$ gives
	\begin{equation}\label{eq:normcurvid}
		\left\lVert \del^{\perp} V \right\rVert^2 - \left\lVert \delbar^{\perp} V \right\rVert^2 = \int_{\Sigma} \left\lvert \Phi_2 \right\rvert^2 \left\lvert \Pi_2 V \right\rvert^2 \, dA.
	\end{equation}
	Substituting (\ref{eq:normcovderiv}) into (\ref{eq:secondvartotisotropic}) and using (\ref{eq:normcurvid}) gives the result.
\end{proof}

\begin{cor}
	{}
	\begin{equation*}
		\operatorname{Ind}_\R (f) \geq 2 h^0(\mathcal{N})
	\end{equation*}
\end{cor}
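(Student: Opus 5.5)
Since $I^\C_f(V,V) = 4\lVert \delbar^{\perp} V\rVert^2 - 2\lVert V\rVert^2$ by Proposition \ref{prop:secondvasrfactor}, every nonzero holomorphic section $V \in H^0(\mathcal{N})$ satisfies $\delbar^\perp V = 0$ and hence $I^\C_f(V,V) = -2\lVert V\rVert^2 < 0$. So $H^0(\mathcal{N})$ is a complex subspace of $\Gamma(\mathcal{N})$ on which the Hermitian form $I^\C_f$ is negative definite. Its complex dimension is $h^0(\mathcal{N})$, and the remaining task is to convert this into a real negative definite subspace of $\Gamma(\nu)$ of real dimension $2h^0(\mathcal{N})$.

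For the conversion I will use the identification $\nu \otimes \C = \mathcal{P}_2 \oplus \overline{\mathcal{P}_2}$ with $\mathcal{N} = \overline{\mathcal{P}_2}$, the $+i$-eigenbundle of $J_\nu$. The map $\nu \to \mathcal{N}$, $U \mapsto \tfrac12(U - iJ_\nu U)$, is a real-linear isomorphism intertwining $J_\nu$ with multiplication by $i$. It is isometric up to a constant factor. Since $J_\nu$ is parallel for $\nabla^\perp$, and $\mathcal{R},\mathcal{B}$ commute with $J_\nu$ (by (\ref{eq:RBtotisotrop}), because $\Pi_2$ is the projection onto a complex line subbundle), we get $I^\C_f(V,V) = c\, I_f(U,U)$ for some constant $c>0$, where $V$ is the image of $U$. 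This is exactly the sense in which (\ref{eq:secondvartotisotropic}) was derived from (\ref{eq:gensecondvar}). Consequently the preimage of $H^0(\mathcal{N})$ is a real subspace of $\Gamma(\nu)$ of real dimension $2h^0(\mathcal{N})$ on which $I_f$ is negative definite.

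The only delicate point is the branched case. Sections of $\mathcal{N}$ are smooth across branch points by construction, because $\mathcal{N}$ was defined globally from the extended flag (\ref{eq:holomflag}). By the Micallef cutoff remark, the second variation formula is valid on $\Gamma(\nu)$, so these holomorphic sections are admissible variations. The definition of $\operatorname{Ind}_\R$ as a maximal negative-definite dimension then gives $\operatorname{Ind}_\R(f) \geq 2h^0(\mathcal{N})$.
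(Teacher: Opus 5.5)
Your proposal is correct and follows the paper's proof: holomorphic sections of $\mathcal{N}$ are negative for $I^\C_f$ by Proposition \ref{prop:secondvasrfactor}, and the factor of $2$ comes from passing to real dimension. Your justification of that doubling is more explicit than the paper's one-line ``doubling to account for real dimension.'' You use the $J_\nu$-equivariant real isomorphism $\nu \to \mathcal{N}$, together with the $J_\nu$-invariance of $I_f$, which holds because $J_\nu$ is parallel and $\mathcal{R},\mathcal{B}$ commute with it.
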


\begin{proof}
	If $V \in H^0(\mathcal{N}),$ then (\ref{eq:secondvarfactor}) gives $I^\C_f(V,V) \leq 0$ with equality only when $V = 0.$ Doubling to account for real dimension gives the result.
\end{proof}

This gives the following result due to Ejiri \cite{Ejiri83} in the unbranched case.

\begin{cor}\label{cor:Ejiribound}
	Suppose $\Sigma \cong S^2.$ Then $H^1(\mathcal{N}) = 0$ and consequently
	\begin{equation*}
		\operatorname{Ind}_\R (f) \geq \frac{\operatorname{Area}(f)}{\pi} + 2(n-3) - 2b_1.
	\end{equation*}
\end{cor}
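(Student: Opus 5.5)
The plan has two parts: first show $H^1(\mathcal{N}) = 0$ for $\Sigma \cong S^2$, then combine the Euler characteristic computation (\ref{eq:EuCharN}) with the preceding corollary $\operatorname{Ind}_\R(f) \geq 2h^0(\mathcal{N})$.

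\textbf{Vanishing of $H^1(\mathcal{N})$.} The holomorphic filtration (\ref{eq:normalfilt}) of $\mathcal{N}$ has successive quotients $\mathcal{F}_r/\mathcal{F}_{r-1} \cong \mathcal{G}_r$ for $2 \leq r \leq n$. This is the same filtration used in Proposition \ref{prop:firstcohomvanish}, with the tensor factor $\mathcal{G}_k$ omitted, so I can invoke Proposition \ref{prop:Filtration-General}(b) directly. On $S^2$ we have $g=0$, and (\ref{eq:spheredegbound}) gives $\deg \mathcal{G}_r = a_r \geq 2r \geq 4$. This exceeds $2g-2 = -2$, so Proposition \ref{prop:degreecohomobounds}(2) gives $H^1(\mathcal{G}_r) = 0$ for every $r \geq 2$. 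Proposition \ref{prop:Filtration-General}(b) then gives $H^1(\mathcal{N}) = 0$.

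\textbf{Counting $h^0(\mathcal{N})$.} With $H^1(\mathcal{N}) = 0$ we have $h^0(\mathcal{N}) = \chi(\mathcal{N})$. Putting $g=0$ in (\ref{eq:EuCharN}) gives
\begin{equation*}
h^0(\mathcal{N}) = \frac{\operatorname{Area}(f)}{2\pi} + (n-3) - b_1.
\end{equation*}
Doubling this and applying the preceding corollary, $\operatorname{Ind}_\R(f) \geq 2h^0(\mathcal{N})$, yields $\operatorname{Ind}_\R(f) \geq \operatorname{Area}(f)/\pi + 2(n-3) - 2b_1$, which is the claimed bound.

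There is no genuine obstacle. The only point to watch is that the degree bound $a_r \geq 2r$ from (\ref{eq:spheredegbound}) holds even at branched and ramified points, since the $b_j$ only increase the degrees. The holomorphic structure on $\mathcal{N}$ is defined across the branch locus via the extended flag (\ref{eq:holomflag}), so the filtration argument applies globally on $\Sigma$.
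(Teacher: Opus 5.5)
Your proposal is correct and matches the paper's proof: the filtration (\ref{eq:normalfilt}) with quotients $\mathcal{G}_r$ of degree $a_r \geq 2r > -2$ gives $H^1(\mathcal{N}) = 0$ via Proposition \ref{prop:Filtration-General}(b), and then $h^0(\mathcal{N}) = \chi(\mathcal{N})$ from (\ref{eq:EuCharN}) is combined with $\operatorname{Ind}_\R(f) \geq 2h^0(\mathcal{N})$.
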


\begin{proof}
	By (\ref{eq:normalfilt}), we have a holomorphic filtration $0 = \mathcal{F}_1 \subset \mathcal{F}_2 \subset \cdots \subset \mathcal{F}_n = \mathcal{N}$ in which each successive quotient $\mathcal{L}_r = \mathcal{F}_r/\mathcal{F}_{r-1} \cong \mathcal{G}_r$ is a holomorphic line bundle. If $\Sigma \cong S^2$, then $\deg(\mathcal{L}_r) = a_r \geq 2r > -2$, so that $H^1(\mathcal{L}_r) = 0$ for each $r \geq 2$. Proposition \ref{prop:Filtration-General} then implies $H^1(\mathcal{N}) = 0$ and the Euler characteristic formula (\ref{eq:EuCharN}) gives
	\begin{equation*}
		h^0(\mathcal{N}) = \chi(\mathcal{N}) = \frac{\operatorname{Area}(f)}{2\pi} + (n-3)(1-g) - b_1. \qedhere
	\end{equation*}
\end{proof}

\section{Index and nullity bounds}\label{sect:bounds}

\indent \indent Motivated by Proposition \ref{prop:secondvasrfactor} and its application to Corollary \ref{cor:Ejiribound}, we now introduce a sequence of quadratic forms beginning with $\tfrac{1}{4} I^{\C}_f.$  These forms will be defined on the holomorphic vector bundles $\mathcal{E}_k,$ $0 \leq k \leq n-1,$ defined by $\mathcal{E}_0 = \mathcal{N},$ which has rank $n-1$ and $\mathcal{E}_k = \mathcal{G}_k \otimes \mathcal{Q}_{k+1}$ for $1 \leq k \leq n-1$, which have $\mathrm{rank}_\C(\mathcal{E}_k) = n - k$. Set the convention $\mathcal{E}_n = 0.$

For $2 \leq k \leq n-1,$ let $\Pi_{>k}: \mathcal{Q}_{k} \to \mathcal{Q}_{k+1}$ be the orthogonal projection associated to the smooth splitting (\ref{eq:Qident}). Define maps $P_k : \Gamma(\mathcal{E}_{k-1}) \to \Omega^{1,0}(\mathcal{E}_{k})$ for $1 \leq k \leq n$ by
\begin{equation*}
	\begin{aligned}
		P_1(V) & = \Phi_1 V,                                              \\
		P_k(V) & = (\Phi_{k} \otimes \Pi_{>k})V, \quad 2 \leq k \leq n-1, \\
		P_n(V) & = 0.                                                     
	\end{aligned}
\end{equation*}

\begin{definition}\label{defn:qkform}
	For $0 \leq k \leq n-1$, we let $q_k$ be the quadratic form on $\Gamma(\mathcal{E}_k)$ defined by
	\begin{equation*}
		\begin{aligned}
			q_k(W) & = \lVert \overline{\partial}_{\mathcal{E}_k} W \rVert^2 - \int_{\Sigma} \lvert \Phi_{k+1} \rvert^2 \, \lvert \Pi_{>k+1} W \rvert^2 \, dA \\
			       & = \lVert \overline{\partial}_{\mathcal{E}_k} W \rVert^2 - \lVert P_{k+1} W \rVert^2.                                                     
		\end{aligned}
	\end{equation*}
\end{definition}
Note that $q_0 = \tfrac{1}{4} I_f^\C$ by Proposition \ref{prop:secondvasrfactor}, since $\mathcal{E}_0 = \mathcal{N}$ and $\lvert \Phi_1 \rvert^2 = 1/2$. Also, $q_{n-1}(W) = \lVert \overline{\partial}_{\mathcal{E}_{n-1}} W \rVert^2 \geq 0$.

\begin{prop}\label{prop:qkequivform}
	For $1 \leq k \leq n-1,$ we have
	\begin{equation*}
		q_k(W) = \lVert \partial_{\mathcal{E}_k} W \rVert^2 - \int_{\Sigma} \lvert \Phi_k \rvert^2 \lvert W \rvert^2 \, dA.
	\end{equation*}
\end{prop}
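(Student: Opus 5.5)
The plan is to apply the integration-by-parts identity (\ref{eq:curvintbyparts}) to the Hermitian holomorphic bundle $\mathcal{E}_k = \mathcal{G}_k \otimes \mathcal{Q}_{k+1}$. This gives
\begin{equation*}
	\lVert \partial_{\mathcal{E}_k} W \rVert^2 - \lVert \overline{\partial}_{\mathcal{E}_k} W \rVert^2 = -\frac{1}{2}\int_\Sigma \langle \mathfrak{K}_{\mathcal{E}_k} W, W\rangle \, dA.
\end{equation*}
Once we have this, the claimed formula is equivalent to the pointwise curvature identity
\begin{equation*}
	-\tfrac{1}{2}\langle \mathfrak{K}_{\mathcal{E}_k} W, W\rangle = \lvert \Phi_{k+1}\rvert^2 \lvert \Pi_{>k+1} W\rvert^2 - \lvert \Phi_k\rvert^2 \lvert W \rvert^2 .
\end{equation*}
Substituting this into the definition of $q_k$ then yields the proposition.

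To compute the curvature we use that the curvature of a tensor product is $\mathfrak{K}_{\mathcal{E}_k} = \mathfrak{K}_{\mathcal{G}_k}\otimes 1 + 1 \otimes \mathfrak{K}_{\mathcal{Q}_{k+1}}$. The metric on $\mathcal{E}_k$ is the tensor product metric, coming from the unitary frame $\mathbf{f}_{\overline{k}}$ and the smooth splitting (\ref{eq:Qident}), and the holomorphic structure is the tensor product of Chern/quotient structures. Equation (\ref{eq:Grcurv}) gives $\mathfrak{K}_{\mathcal{G}_k} = 2(\lvert\Phi_{k+1}\rvert^2 - \lvert \Phi_k\rvert^2)$. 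Equation (\ref{eq:Qkcurv}) gives $\mathfrak{K}_{\mathcal{Q}_{k+1}} = -2\lvert \Phi_{k+1}\rvert^2 \Pi_{k+1}$. Hence
\begin{equation*}
	-\tfrac{1}{2}\mathfrak{K}_{\mathcal{E}_k} = \lvert\Phi_k\rvert^2 - \lvert \Phi_{k+1}\rvert^2 (1 - \Pi_{k+1}) = \lvert\Phi_k\rvert^2 - \lvert\Phi_{k+1}\rvert^2 \Pi_{>k+1},
\end{equation*}
using $1-\Pi_{k+1} = \Pi_{>k+1}$ on $\mathcal{Q}_{k+1}$.

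There is a sign issue here. The above gives $\lVert\partial W\rVert^2 - \lVert\delbar W\rVert^2 = \int \lvert\Phi_k\rvert^2\lvert W\rvert^2 - \lvert\Phi_{k+1}\rvert^2 \lvert\Pi_{>k+1}W\rvert^2$. Rearranged, this says $q_k(W) = \lVert \partial W\rVert^2 - \int\lvert\Phi_k\rvert^2\lvert W\rvert^2$, which is exactly the claim. So the signs are consistent with convention (\ref{eq:curvintbyparts}), where a positive-degree line bundle has negative $\mathfrak{K}$.

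The main point needing care is bookkeeping rather than a real obstacle. First, the convention that $\mathcal{E}_k$ carries the tensor product of the Chern connection on $\mathcal{G}_k$ and the quotient connection (\ref{eq:Qconn}) on $\mathcal{Q}_{k+1}$ must be compatible with the holomorphic structure, so that (\ref{eq:curvintbyparts}) applies. Second, the edge case $k = n-1$ must be checked, where $\mathcal{Q}_n = \mathcal{G}_n$, $\Pi_{>n} = 0$ and $\Phi_{n+1}=0$; here the formula reduces to the line bundle $\mathcal{G}_{n-1}\otimes\mathcal{G}_n$ with curvature $2(\lvert\Phi_{n}\rvert^2-\lvert\Phi_{n-1}\rvert^2) - 2\lvert\Phi_n\rvert^2$. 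Third, for $k=1$ we use $\lvert \Phi_1\rvert^2 = 1/2$ in (\ref{eq:Grcurv}). Finally, at branch and ramification points the integration by parts is justified because all bundles and metrics are smooth on all of $\Sigma$, the $\Phi_r$ being global holomorphic sections.
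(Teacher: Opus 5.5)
Your proof is correct and follows essentially the same route as the paper. You apply (\ref{eq:curvintbyparts}) to $\mathcal{E}_k$, use (\ref{eq:Grcurv}) and (\ref{eq:Qkcurv}) to get $\mathfrak{K}_{\mathcal{E}_k} = -2\lvert\Phi_k\rvert^2\,\mathrm{Id} + 2\lvert\Phi_{k+1}\rvert^2\,\Pi_{>k+1}$, and rearrange, exactly as the paper does; the ``sign issue'' you raise is not actually one, as your own check confirms.
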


\begin{proof}
	The curvature integral formula (\ref{eq:curvintbyparts}) applied to $\mathcal{E}_k$ gives
	\begin{equation*}
		\lVert \partial_{\mathcal{E}_k} W \rVert^2 - \lVert \overline{\partial}_{\mathcal{E}_k} W \rVert^2 = -\tfrac{1}{2} \int_{\Sigma} \langle \mathfrak{K}_{{\mathcal{E}_k}} W, W \rangle \, dA.
	\end{equation*}
	The formulas (\ref{eq:Grcurv}) and (\ref{eq:Qkcurv}) give
	\begin{equation*}
		\begin{aligned}
			\mathfrak{K}_{\mathcal{E}_k} & = 2 \left( \lvert \Phi_{k+1} \rvert^2 - \lvert \Phi_k \rvert^2 \right) \mathrm{Id} - 2  \, \lvert \Phi_{k+1} \rvert^2 \Pi_{k+1} \\
			    & = -2 \, \lvert \Phi_k \rvert^2 \, \mathrm{Id} + 2 \lvert \Phi_{k+1} \rvert^2 \, \Pi_{>k+1}.                                     
		\end{aligned}
	\end{equation*}
	Substituting and rearranging completes the proof.
\end{proof}

Regarding $P_k$ as an element of $\Omega^{1,0}(\mathrm{Hom}(\mathcal{E}_{k-1}, \mathcal{E}_k)),$ its pointwise Hermitian adjoint is $P_k^\dagger \in \Omega^{0,1}(\mathrm{Hom}(\mathcal{E}_k, \mathcal{E}_{k-1})).$ Let $\iota_{k+1} : \mathcal{Q}_{k+1} \to \mathcal{Q}_k$ be the inclusion induced by the smooth identification (\ref{eq:Qident}), so that $\iota_{k+1} = \Pi_{>k}^{\dagger}.$ We may express $P_k^\dagger$ as
\begin{equation*}
	\begin{aligned}
		P_1^\dagger & = \Phi_1^\dagger \otimes \mathrm{Id},                          \\
		P_k^\dagger & = \Phi_k^\dagger \otimes \iota_{k+1}, \quad 2 \leq k \leq n-1. 
	\end{aligned}
\end{equation*}

\begin{prop}\label{prop:commutident}
	For $0 \leq k \leq n-2,$ we have the identity
	\begin{equation*}
		(\overline{\partial_{\mathcal{E}_k}})^* \circ P_{k+1}^\dagger = -P_{k+1}^* \circ \partial_{\mathcal{E}_{k+1}},
	\end{equation*}
	where $P_{k+1}^*: \Omega^{1,0}(\mathcal{E}_{k+1}) \to \Gamma(\mathcal{E}_{k})$ is the formal $L^2$ adjoint of $P_{k+1}.$
\end{prop}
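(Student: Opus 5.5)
The plan is to reduce the identity to a pointwise statement: the bundle map $P_{k+1}^\dagger$ is $\partial$-parallel. That in turn is the conjugate of the fact that $P_{k+1}$ is a holomorphic section of $\mathcal{K}\otimes\mathrm{Hom}(\mathcal{E}_k,\mathcal{E}_{k+1})$.

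\emph{Step 1 (rewrite both sides).} Extend the Kähler identities $\overline{\partial}^* = -\partial$ and $\partial^* = -\overline{\partial}$, recorded before (\ref{eq:curvintbyparts}), to the bundles $\mathcal{E}_k$, with their conventions for the Hodge star on $(0,1)$- and $(1,0)$-forms. Then for $Z \in \Gamma(\mathcal{E}_{k+1})$ the left side becomes
\begin{equation*}
(\overline{\partial_{\mathcal{E}_k}})^*\bigl(P_{k+1}^\dagger Z\bigr) = -\partial_{\mathcal{E}_k}\bigl(P_{k+1}^\dagger Z\bigr).
\end{equation*}
Here we read $\partial_{\mathcal{E}_k}$ of an $\mathcal{E}_k$-valued $(0,1)$-form as a $(1,1)$-form, converted to a section via $dA$. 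Since $P_{k+1}$ is zeroth order, its formal adjoint $P_{k+1}^*$ on $(1,0)$-forms is pointwise contraction with $P_{k+1}^\dagger$, using the same identification. The right side is therefore $-P_{k+1}^\dagger \wedge \partial_{\mathcal{E}_{k+1}} Z$, with the same normalization. By the Leibniz rule, the identity is then equivalent to
\begin{equation*}
\bigl(\partial_{\mathrm{Hom}(\mathcal{E}_{k+1},\mathcal{E}_k)}P_{k+1}^\dagger\bigr) Z = 0 \quad \text{for all } Z,
\end{equation*}
where the connection on $\mathrm{Hom}$ is induced from the Chern connections together with the Levi-Civita connection on forms.

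\emph{Step 2 (holomorphicity of $P_{k+1}$).} For Chern connections of Hermitian holomorphic bundles, taking adjoints intertwines the $\overline{\partial}$-derivative of a bundle-valued $(1,0)$-form with the $\partial$-derivative of its adjoint. So it suffices to show that $P_{k+1}$ is holomorphic.

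For $k=0$, $P_1 = \Phi_1\otimes \mathrm{Id}_{\mathcal{N}}$ with $\Phi_1 \in H^0(\mathcal{K}\otimes\mathcal{G}_1)$, so $P_1$ is holomorphic.

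For $1\le k\le n-2$, $P_{k+1} = \Phi_{k+1}\otimes\Pi_{>k+1}$. Here $\Phi_{k+1}\in H^0(\mathcal{K}\otimes\mathcal{G}_k^*\otimes\mathcal{G}_{k+1})$ is holomorphic. The map $\Pi_{>k+1}\colon\mathcal{Q}_{k+1}\to\mathcal{Q}_{k+2}$, although defined via the smooth splitting (\ref{eq:Qident}), coincides with the holomorphic quotient map in (\ref{eq:Qses}).

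Rather than rely on this abstractly, I would confirm it directly from (\ref{eq:Qconn}). There one checks that $\overline{\partial_{\mathcal{Q}_{k+2}}}\circ\Pi_{>k+1} = \Pi_{>k+1}\circ\overline{\partial_{\mathcal{Q}_{k+1}}}$, because the only term lost, $-\Phi^\dagger_{k+2}A_{k+2}$, lands in the first slot, which is killed by the projection. Dually, $\partial_{\mathcal{Q}_{k+1}}\circ\iota_{k+2} = \iota_{k+2}\circ\partial_{\mathcal{Q}_{k+2}}$, because $\partial_{\mathcal{Q}_{k+1}}$ preserves $\bigoplus_{j\ge k+2}\mathcal{G}_j$ and its first component on such sections is $\partial_{k+1}0 = 0$.

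Combining this with $\partial$-parallelism of $\Phi_{k+1}^\dagger$, which is the conjugate of holomorphicity, gives
\begin{equation*}
\partial_{\mathcal{E}_k}\bigl((\Phi_{k+1}^\dagger\otimes\iota_{k+2})Z\bigr) = (\Phi^\dagger_{k+1}\otimes\iota_{k+2})\,\partial_{\mathcal{E}_{k+1}}Z,
\end{equation*}
which is exactly Step 1's reduced identity. A concrete check in the frame is available: write $\Phi_{k+1} = w_{k+1}\eta\otimes\mathbf{f}_{\overline{k}}^*\otimes\mathbf{f}_{\overline{k+1}}$ and use $dw_{k+1}$ from (\ref{eq:structeqsSig}), which has no $\overline{\eta}$ term, together with $d\eta = -\kappa_{1\overline{1}}\wedge\eta$. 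Under conjugation this shows that $\overline{w_{k+1}}\,\overline{\eta}$ has vanishing covariant $\partial$-derivative.

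\emph{Main obstacle.} The main difficulty is bookkeeping rather than conceptual: signs and factors in the adjoint of form-valued operators. I must check that the factor of $i$ and the factor of $2$ in $dA=\tfrac{i}{2}\eta\wedge\overline{\eta}$ appear identically in the conversion of $(\overline{\partial})^*$ on $(0,1)$-forms and in $P_{k+1}^*$ on $(1,0)$-forms, so that both sides carry the same minus sign. I would do this by working locally with the coframe $\eta$ and writing $P_{k+1} = p\,\eta$ and $P_{k+1}^\dagger = p^\dagger\,\overline{\eta}$. I would then verify the identity by integrating against a test section $W$, integrating by parts once, and invoking Step 2. The restriction $k\le n-2$ only ensures that $P_{k+1}$ is given by the nontrivial formula and not the convention $P_n=0$.
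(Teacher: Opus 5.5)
Your proposal is correct and is essentially the paper's own argument. You use the K\"ahler identity $\overline{\partial}^* = -\partial$ to rewrite the left side as $-\partial_{\mathcal{E}_k}(P_{k+1}^\dagger Z)$, then move $\partial$ past $P_{k+1}^\dagger = \Phi_{k+1}^\dagger \otimes \iota_{k+2}$ using $\partial \Phi_{k+1}^\dagger = 0$ and $\partial_{\mathcal{Q}_{k+1}} \circ \iota_{k+2} = \iota_{k+2} \circ \partial_{\mathcal{Q}_{k+2}}$ from (\ref{eq:Qconn}), and finally identify the formal adjoint of the zeroth-order $P_{k+1}$ with pointwise contraction by $P_{k+1}^\dagger$; your check that $\Pi_{>k+1}$ is holomorphic and your tracking of normalization factors in the two adjoints are the dual, more careful versions of steps the paper leaves implicit.
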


\begin{proof}
	The formula (\ref{eq:Qconn}) implies $\partial_{\mathcal{Q}_{k+1}} \circ \iota_{k+2} = \iota_{k+2} \circ \partial_{\mathcal{Q}_{k+2}}.$ Then, for $1 \leq k \leq n-2,$
	\begin{equation*}
		\begin{aligned}
			(\overline{\partial_{\mathcal{E}_k}})^* P_{k+1}^\dagger Z & = - {\partial_{\mathcal{E}_k}}(\iota_{k+2} (\Phi_{k+1}^\dagger \otimes \mathrm{Id}) Z) \\
			    & = -\iota_{k+2} (\Phi_{k+1}^\dagger \otimes \mathrm{Id}) \partial_{\mathcal{E}_{k+1}} Z \\
			    & = -P_{k+1}^* \partial_{\mathcal{E}_{k+1}} Z,                                           
		\end{aligned}
	\end{equation*}
	where we have used the fact that holomorphicity of $\Phi_{k+1}$ implies $\partial \Phi_{k+1}^\dagger = 0.$ For the last equality, since $P_{k+1}$ is zeroth-order the formal adjoint is just contraction with the pointwise Hermitian adjoint. For $k=0$ the same calculation applies with $\iota_{k+2}$ replaced by $\mathrm{Id}$.
\end{proof}

\begin{cor}\label{cor:qdiff}
	For $0 \leq k \leq n-2,$ we have
	\begin{equation}\label{eq:qkdiffid}
		q_k(W) - q_{k+1}(Z)= \lVert \overline{\partial}_{\mathcal{E}_k} W - P_{k+1}^\dagger Z \rVert^2 - \lVert \partial_{\mathcal{E}_{k+1}} Z +  P_{k+1} W \rVert^2.
	\end{equation}
\end{cor}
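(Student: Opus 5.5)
The plan is to expand both squared norms on the right-hand side of (\ref{eq:qkdiffid}), cancel the cross terms using Proposition \ref{prop:commutident}, and recognize what remains as $q_k(W) - q_{k+1}(Z)$ using the two expressions for the forms: Definition \ref{defn:qkform} for $q_k$ and Proposition \ref{prop:qkequivform} for $q_{k+1}$. The latter applies because $1 \leq k+1 \leq n-1$.

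\textbf{Step 1: expansion.} Writing $\langle\!\langle \cdot,\cdot\rangle\!\rangle$ for the $L^2$ Hermitian inner product, expanding gives
\begin{align*}
\lVert \delbar_{\mathcal E_k} W - P_{k+1}^\dagger Z\rVert^2 &= \lVert \delbar_{\mathcal E_k} W\rVert^2 + \lVert P_{k+1}^\dagger Z\rVert^2 - 2\,\mathrm{Re}\,\langle\!\langle \delbar_{\mathcal E_k} W, P_{k+1}^\dagger Z\rangle\!\rangle,\\
\lVert \del_{\mathcal E_{k+1}} Z + P_{k+1} W\rVert^2 &= \lVert \del_{\mathcal E_{k+1}} Z\rVert^2 + \lVert P_{k+1} W\rVert^2 + 2\,\mathrm{Re}\,\langle\!\langle \del_{\mathcal E_{k+1}} Z, P_{k+1} W\rangle\!\rangle.
\end{align*}

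\textbf{Step 2: the cross terms cancel.} Moving the operators onto $W$ in both cross terms gives
\begin{align*}
\langle\!\langle \delbar_{\mathcal E_k} W, P_{k+1}^\dagger Z\rangle\!\rangle &= \langle\!\langle W, (\delbar_{\mathcal E_k})^* P_{k+1}^\dagger Z\rangle\!\rangle,\\
\langle\!\langle \del_{\mathcal E_{k+1}} Z, P_{k+1} W\rangle\!\rangle &= \overline{\langle\!\langle W, P_{k+1}^*\del_{\mathcal E_{k+1}} Z\rangle\!\rangle}.
\end{align*}
Proposition \ref{prop:commutident} says $(\delbar_{\mathcal E_k})^* P_{k+1}^\dagger = -P_{k+1}^*\del_{\mathcal E_{k+1}}$. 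Hence the two real parts are negatives of each other. In the difference of the two squared norms the cross terms enter as $-2\mathrm{Re}(\cdot) - 2\mathrm{Re}(\cdot)$, so they cancel exactly.

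\textbf{Step 3: the pointwise term.} It remains to identify $\lVert P_{k+1}^\dagger Z\rVert^2$. Pointwise, $P_{k+1}^\dagger = \Phi_{k+1}^\dagger\otimes\iota_{k+2}$ (or $\Phi_1^\dagger\otimes\mathrm{Id}$ when $k=0$). Since $\iota_{k+2}$ is an isometric inclusion and $\Phi_{k+1}$ acts on a line bundle, we get $|P_{k+1}^\dagger Z|^2 = |\Phi_{k+1}|^2|Z|^2$. The norm conventions on $(1,0)$- versus $(0,1)$-forms must match those used in Definition \ref{defn:qkform}; conjugation is an isometry, so they do.

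\textbf{Step 4: assembly.} The right-hand side of (\ref{eq:qkdiffid}) becomes
\begin{equation*}
\lVert \delbar_{\mathcal E_k} W\rVert^2 - \lVert P_{k+1}W\rVert^2 - \Bigl(\lVert \del_{\mathcal E_{k+1}} Z\rVert^2 - \int_\Sigma |\Phi_{k+1}|^2|Z|^2\,dA\Bigr).
\end{equation*}
By Definition \ref{defn:qkform} and Proposition \ref{prop:qkequivform}, this equals $q_k(W) - q_{k+1}(Z)$.

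The main obstacle is bookkeeping rather than any conceptual step. Three points need care: the sign and conjugation conventions in the cross terms, so that Proposition \ref{prop:commutident} gives cancellation rather than doubling; the isometry in Step 3; and the $k=0$ case, where $\iota$ is replaced by the identity.
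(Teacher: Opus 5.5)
Your proposal is correct and follows the paper's route: write $q_k(W)=\lVert \overline{\partial}_{\mathcal{E}_k}W\rVert^2-\lVert P_{k+1}W\rVert^2$ (Definition~\ref{defn:qkform}) and $q_{k+1}(Z)=\lVert \partial_{\mathcal{E}_{k+1}}Z\rVert^2-\lVert P_{k+1}^\dagger Z\rVert^2$ (Proposition~\ref{prop:qkequivform}), expand both squares, and cancel the cross terms with Proposition~\ref{prop:commutident}. Your Step 3 spells out the pointwise identity $|P_{k+1}^\dagger Z|^2=|\Phi_{k+1}|^2|Z|^2$, which the paper uses without comment.
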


\begin{proof}
	Definition \ref{defn:qkform} and Proposition \ref{prop:qkequivform} can be expressed as
	\begin{equation*}
		\begin{aligned}
			q_k(W)     & = \lVert \overline{\partial}_{\mathcal{E}_k} W \rVert^2 - \lVert P_{k+1} W \rVert^2,  \\
			q_{k+1}(Z) & = \lVert \partial_{\mathcal{E}_{k+1}} Z \rVert^2 - \lVert P_{k+1}^\dagger Z \rVert^2, 
		\end{aligned}
	\end{equation*}
	respectively. Proposition \ref{prop:commutident} then allows us to complete some squares with no cross terms in $q_k(W) - q_{k+1}(Z)$:
	\begin{equation*}
		\begin{aligned}
			  & \lVert \overline{\partial}_{\mathcal{E}_k} W - P_{k+1}^\dagger Z \rVert^2 - \lVert \partial_{\mathcal{E}_{k+1}} Z +  P_{k+1} W \rVert^2                                  \\
			  & = \lVert \overline{\partial}_{\mathcal{E}_k} W \rVert^2 + \lVert P_{k+1}^\dagger Z \rVert^2 - \lVert \partial_{\mathcal{E}_{k+1}} Z \rVert^2 - \lVert P_{k+1} W \rVert^2 
		\end{aligned}
	\end{equation*}
	because
	\begin{equation*}
		\begin{aligned}
			\llangle P_{k+1}^\dagger Z, \overline{\partial}_{\mathcal{E}_k} W \rrangle + \llangle \partial_{\mathcal{E}_{k+1}} Z, P_{k+1} W \rrangle = \llangle \left( (\overline{\partial_{\mathcal{E}_k}})^* P_{k+1}^\dagger + P_{k+1}^* \partial_{\mathcal{E}_{k+1}} \right) Z, W \rrangle = 0. 
		\end{aligned}
	\end{equation*}
\end{proof}

\begin{rmk}\label{rmk:twistorinterpret}
	The identity (\ref{eq:qkdiffid}) is central to the results in this paper. It admits an interpretation in terms of the twistor geometry of $\Sigma,$ as we now describe.
			
	For $1\leq k\leq n$, set $m=n-k+1$ and consider the partial twistor lift $\Psi_k:\Sigma \to Z_m$ which, as noted in \S\ref{ssect:twistorlift}, is both holomorphic and
	horizontal because $d\Psi_k(T^{1,0}\Sigma)	\subset	\mathcal H_m^{1,0}$. Let $\mathcal N_m$ denote the holomorphic normal bundle of $\Psi_k$ in $Z_m$. Since $\Psi_k$ is horizontal and $\mathcal H_m^{1,0}$ is holomorphic,	there is a holomorphic exact sequence
	\begin{equation*}
		0 \longrightarrow \mathcal N_{\Psi_k}^{\mathrm{hor}} \longrightarrow \mathcal N_m \longrightarrow \mathcal V_k	\longrightarrow 0,
	\end{equation*}
	where
	\begin{equation*}
		\mathcal N_{\Psi_k}^{\mathrm{hor}} = \frac{\Psi_k^*\mathcal H_m^{1,0}}{d\Psi_k(T^{1,0}\Sigma)}
	\end{equation*}
	is the horizontal normal bundle and $\mathcal{V}_k$ is the quotient
	\begin{equation*}
		\mathcal V_k = \Psi_k^*\left(T^{1,0}Z_m/\mathcal H_m^{1,0}\right) \cong	\Lambda^2\mathcal Q_k
	\end{equation*}
	which measures motion transverse to the horizontal distribution of $Z_m$.
			
	The holomorphic exact sequence
	\begin{equation*}
		0 \longrightarrow \mathcal G_k \longrightarrow \mathcal Q_k \longrightarrow \mathcal Q_{k+1} \longrightarrow 0
	\end{equation*}
	induces
	\begin{equation*}
		0 \longrightarrow \mathcal G_k\otimes\mathcal Q_{k+1}	\longrightarrow	\Lambda^2\mathcal Q_k	\longrightarrow	\Lambda^2\mathcal Q_{k+1}	\longrightarrow 0,
	\end{equation*}
	or in other words,
	\begin{equation*}
		0 \longrightarrow \mathcal E_k \longrightarrow \mathcal V_k	\longrightarrow	\mathcal V_{k+1} \longrightarrow 0.
	\end{equation*}
	In particular, $\mathcal V_k$ has a filtration whose successive quotients are $\mathcal E_k,\mathcal E_{k+1},\ldots,\mathcal E_{n-1}$ and this gives rise to a smooth splitting
	\begin{equation}\label{eq:Vident}
		\mathcal{V}_k \simeq \mathcal E_k \oplus \mathcal E_{k+1} \oplus \cdots \oplus \mathcal E_{n-1},
	\end{equation}
	which is simply the splitting (\ref{eq:Qident}) applied to $\mathcal{V}_k \cong \Lambda^2 \mathcal{Q}_k.$
			
	For $1\leq k\leq n-2$, let $\mathcal B_k = \ker\left(\mathcal V_k \to \mathcal V_{k+2}\right).$ Then
	\begin{equation*}
		0 \longrightarrow \mathcal E_k \longrightarrow \mathcal B_k \longrightarrow	\mathcal E_{k+1} \longrightarrow 0
	\end{equation*}
	is a holomorphic exact sequence. With respect to the smooth splitting $\mathcal B_k \simeq \mathcal E_k\oplus\mathcal E_{k+1}$ induced by (\ref{eq:Vident}), the Dolbeault operators of $\mathcal{B}_k$ are
	\begin{equation*}
		\overline{\partial}_{\mathcal B_k} = \begin{bmatrix}
		\overline{\partial}_{\mathcal E_k} & -P_{k+1}^\dagger \\
		0 & \overline{\partial}_{\mathcal E_{k+1}}
		\end{bmatrix}, \qquad \partial_{\mathcal B_k} =	\begin{bmatrix}	\partial_{\mathcal E_k} & 0\\
		P_{k+1} & \partial_{\mathcal E_{k+1}}
		\end{bmatrix},
	\end{equation*}
	where the off-diagonal terms are induced by $\Phi_{k+1}=-d\Psi_{k+1}$ and its pointwise Hermitian adjoint. A straightforward computation gives 
	\begin{equation*}
		\mathfrak{K}_{\mathcal{B}_k} = \begin{bmatrix}
		-2 \lvert \Phi_k \rvert^2 \, \mathrm{Id} & 0 \\
		0 & 2 \lvert \Phi_{k+2} \rvert^2 \, \Pi_{>k+2}
		\end{bmatrix},
	\end{equation*}
	and the identity (\ref{eq:qkdiffid}) is the curvature integral identity (\ref{eq:curvintbyparts}) applied to $\mathcal{B}_k.$
			
	Moreover, this interpretation also gives an intuitive explanation for the two squares in (\ref{eq:qkdiffid}) and their signs. For the component $(W,Z)$ of a normal variation in $\mathcal E_k\oplus\mathcal E_{k+1}$, the expression $\overline{\partial}_{\mathcal E_k}W-P^\dagger_{k+1}Z$ is the $\mathcal{E}_k$-component of the linearized holomorphicity equation. The horizontality condition on $\Psi_k$ is the equation obtained by projecting $d\Psi_k$ to $\mathcal V_k$; the $\mathcal{E}_{k+1}$-component of the $(1,0)$-part of its linearization is $\partial_{\mathcal E_{k+1}}Z+P_{k+1}W.$ Thus
	\begin{equation*}
		q_k(W)-q_{k+1}(Z) =	\left\lVert	\overline{\partial}_{\mathcal E_k}W-P^\dagger_{k+1}Z \right\rVert^2 - \left\lVert \partial_{\mathcal E_{k+1}}Z+P_{k+1}W \right\rVert^2
	\end{equation*}
	may be interpreted as follows: the positive term measures a failure of holomorphicity, while the negative term measures a failure of horizontality.
\end{rmk}

\subsection{The lower index bound}

\indent \indent The identity (\ref{eq:qkdiffid}) allows us to pass from a negative subspace for $q_{k+1}$ to a negative subspace for $q_{k},$ provided a $\delbar$-equation can be solved to eliminate the positive term in (\ref{eq:qkdiffid}). \newpage

\begin{prop}\label{prop:inductivestep}	
    Let $f: \Sigma \to S^{2n}$ be a compact, linearly full, totally isotropic branched immersion. We have
	\begin{equation*}
		\begin{aligned}
			\operatorname{Ind}_\C(q_0) & \geq \operatorname{Ind}_\C(q_{1}) + \chi(\mathcal{N}), \\
			\operatorname{Ind}_\C(q_k) & \geq \operatorname{Ind}_\C(q_{k+1}) + \chi(\mathcal G_k \otimes \mathcal{Q}_{k+2}) - h^1(\mathcal{G}_k \otimes \mathcal{G}_{k+1}), \quad 1 \leq k \leq n-2. 
		\end{aligned}
	\end{equation*}
\end{prop}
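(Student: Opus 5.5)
The plan is to transport a maximal negative-definite subspace of $q_{k+1}$ down to $q_k$ using the identity (\ref{eq:qkdiffid}). First I kill its positive term by solving a $\delbar$-equation. Then I enlarge the result by the holomorphic sections of $\mathcal{E}_k$.

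\emph{Construction.} Fix $0\le k\le n-2$ and let $\mathcal{W}\subset\Gamma(\mathcal{E}_{k+1})$ be a complex subspace of dimension $\operatorname{Ind}_\C(q_{k+1})$ on which $q_{k+1}$ is negative definite. Consider
\begin{equation*}
V=\left\{(W,Z)\in\Gamma(\mathcal{E}_k)\times\mathcal{W} \;\middle|\; \delbar_{\mathcal{E}_k}W=P_{k+1}^\dagger Z\right\}.
\end{equation*}
\begin{itemize}
\item By Hodge theory for the elliptic operator $\delbar_{\mathcal{E}_k}$ on the compact surface $\Sigma$, the equation $\delbar_{\mathcal{E}_k} W=\beta$ is solvable for $\beta\in\Omega^{0,1}(\mathcal{E}_k)$ exactly when the class $[\beta]\in H^1(\mathcal{E}_k)$ vanishes.
\item The solution is then unique up to $H^0(\mathcal{E}_k)$.
\item Hence $\dim_\C V\ge \dim\mathcal{W}-h^1(\mathcal{E}_k)+h^0(\mathcal{E}_k)=\operatorname{Ind}_\C(q_{k+1})+\chi(\mathcal{E}_k)$.
\end{itemize}
All bundles here are globally defined holomorphic bundles over $\Sigma$, so branch points cause no trouble.

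\emph{Sign of $q_k$ on $V$.} For $(W,Z)\in V$, the identity (\ref{eq:qkdiffid}) gives
\begin{equation*}
q_k(W)=q_{k+1}(Z)-\lVert \partial_{\mathcal{E}_{k+1}}Z+P_{k+1}W\rVert^2\le 0 .
\end{equation*}
I next determine when equality holds.
\begin{itemize}
\item If $q_k(W)=0$, then $q_{k+1}(Z)=0$, so $Z=0$ because $q_{k+1}$ is definite on $\mathcal{W}$.
\item With $Z=0$, $W$ is holomorphic and $P_{k+1}W=0$.
\item The map $(W,Z)\mapsto W$ is injective on $V$: if $W=0$ then $q_{k+1}(Z)\ge 0$ forces $Z=0$.
\end{itemize}
So the pullback of $q_k$ to $V$ is negative semidefinite, and its null vectors are the holomorphic $W$ with $P_{k+1}W=0$ (and $Z=0$).

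\emph{The case $k=0$.} Here $P_1W=\Phi_1W$, and $\Phi_1$ is generically nonvanishing, so $W=0$. Thus $q_0$ is negative definite on the image of $V$, and $\operatorname{Ind}_\C(q_0)\ge \operatorname{Ind}_\C(q_1)+\chi(\mathcal{N})$.

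\emph{The case $1\le k\le n-2$.} Now $P_{k+1}W=\Phi_{k+1}\Pi_{>k+1}W=0$ forces $W$ to lie pointwise in the summand $\mathcal{G}_k\otimes\mathcal{G}_{k+1}$ of $\mathcal{E}_k=\mathcal{G}_k\otimes\mathcal{Q}_{k+1}$. This summand is a holomorphic subbundle, since $\delbar_{\mathcal{Q}_{k+1}}$ preserves $\mathcal{G}_{k+1}$. Hence the null vectors form a subspace of $H^0(\mathcal{G}_k\otimes\mathcal{G}_{k+1})$.

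For a negative semidefinite Hermitian form, the null vectors make up the radical, which is a linear subspace. Any complement of the radical is therefore negative definite. This yields
\begin{equation*}
\operatorname{Ind}_\C(q_k)\ge \operatorname{Ind}_\C(q_{k+1})+\chi(\mathcal{E}_k)-h^0(\mathcal{G}_k\otimes\mathcal{G}_{k+1}).
\end{equation*}

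\emph{Converting to the stated form.} Finally I use the holomorphic exact sequence
\begin{equation*}
0\to\mathcal{G}_k\otimes\mathcal{G}_{k+1}\to\mathcal{E}_k\to\mathcal{G}_k\otimes\mathcal{Q}_{k+2}\to 0,
\end{equation*}
obtained by tensoring (\ref{eq:Qses}) with $\mathcal{G}_k$. Additivity of $\chi$ gives
\begin{equation*}
\chi(\mathcal{E}_k)-h^0(\mathcal{G}_k\otimes\mathcal{G}_{k+1})=\chi(\mathcal{G}_k\otimes\mathcal{Q}_{k+2})-h^1(\mathcal{G}_k\otimes\mathcal{G}_{k+1}),
\end{equation*}
which is the claimed bound.

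The main obstacle is the identification of the degenerate directions. One must check that the zero set of $q_k$ on $V$ is exactly this space of holomorphic sections, and that passing to a complement of the radical of a semidefinite form is legitimate. The solvability count is standard by comparison.
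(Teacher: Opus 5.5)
Your proof is correct and is essentially the paper's argument. Like the paper, you solve $\overline{\partial}_{\mathcal{E}_k}W = P_{k+1}^\dagger Z$ on the kernel of the obstruction map $Z\mapsto[P_{k+1}^\dagger Z]\in H^1(\mathcal{E}_k)$, add holomorphic sections of $\mathcal{E}_k$ modulo $H^0(\mathcal{G}_k\otimes\mathcal{G}_{k+1})$, and convert the count using additivity of $\chi$ along the same exact sequence. The only difference is packaging: the paper writes the negative-definite subspace explicitly as $G_k(U')\oplus R_k(V_k)$, namely minimal-norm solutions plus a lift of the image of $H^0(\mathcal{E}_k)$ in $H^0(\mathcal{G}_k\otimes\mathcal{Q}_{k+2})$, whereas you take the whole solution space and discard the radical of the semidefinite pulled-back form, which you correctly place inside $H^0(\mathcal{G}_k\otimes\mathcal{G}_{k+1})$.
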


\begin{proof}
	For $1 \leq k \leq n-2,$ consider the short exact sequence obtained by tensoring (\ref{eq:Qses}) by $\mathcal{G}_k,$
	\begin{equation}\label{eq:genses}
		\begin{tikzcd}
			0 & {\mathcal{G}_k \otimes \mathcal{G}_{k+1}} & {\mathcal{E}_k} & {\mathcal{G}_k \otimes \mathcal{Q}_{k+2}} & 0
			\arrow[from=1-1, to=1-2]
			\arrow[from=1-2, to=1-3]
			\arrow["{\mathrm{Id} \otimes \Pi_{>k+1}}", from=1-3, to=1-4]
			\arrow[from=1-4, to=1-5]
		\end{tikzcd} 
	\end{equation}
	and let $V_k < H^0(\mathcal{G}_k \otimes \mathcal{Q}_{k+2})$ be the image of the induced map $\mathrm{Id} \otimes \Pi_{>k+1}: H^0(\mathcal{E}_k) \to H^0(\mathcal{G}_k \otimes \mathcal{Q}_{k+2}).$ Choose a $\C$-linear right inverse $R_k: V_k \to H^0(\mathcal{E}_k).$ If $k=0,$ set $V_0 = H^0(\mathcal{E}_0) = H^0(\mathcal{N})$ and let $R_0 : V_0 \to H^0(\mathcal{E}_0)$ be the identity.
	
	Let $U < \Gamma(\mathcal{E}_{k+1})$ be a negative subspace for $q_{k+1}.$ Consider the map $\Omega: U \to H^1(\mathcal{E}_k),$ $\Omega(Z) = \left[P_{k+1}^\dagger Z\right],$ and let $U' = \ker \Omega.$ The codimension of $U'$ in $U$ is at most $h^1(\mathcal{E}_k).$ For every $Z \in U',$ Dolbeault theory implies the equation $\delbar_{\mathcal{E}_k} W = P_{k+1}^\dagger Z$ has a unique solution $W \in \ker(\delbar_{\mathcal{E}_k})^\perp.$ Denote the resulting map $Z \mapsto W$ by $G_k : U' \to \ker(\delbar_{\mathcal{E}_k})^\perp.$
	
	The restriction of $P_{k+1}^\dagger$ to $U$ is injective because, if $u \in U$ is non-zero,
	\begin{equation*}
		\lVert \partial_{\mathcal{E}_{k+1}} u \rVert^2 - \lVert P_{k+1}^\dagger u \rVert^2 = q_{k+1}(u) < 0,
	\end{equation*}
	so $P_{k+1}^\dagger u \neq 0$.
	
	We claim that $G_k(U') \oplus R_k(V_k)$ is a negative subspace for $q_k$. The sum is direct because $R_k(V_k) < H^0(\mathcal{E}_k)$ and the codomain of $G_k$ is $\ker(\delbar_{\mathcal{E}_k})^\perp.$ For non-zero $W = G_k(Z)+R_k(\sigma) \in G_k(U') \oplus R_k(V_k),$ we have
	\begin{equation*}
		q_k(W) = q_{k+1}(Z) - \lVert \partial_{\mathcal{E}_{k+1}} Z +  P_{k+1} W \rVert^2 < 0,
	\end{equation*}
	because if $Z \neq 0$ the first term is strictly negative, and if $Z = 0,$ then $P_{k+1} W = P_{k+1} R_k \sigma = \Phi_{k+1} \sigma \neq 0$ and the last term is strictly negative. Therefore $G_k(U') \oplus R_k(V_k)$ is a negative subspace for $q_k$, so $\operatorname{Ind}_{\C}(q_k) \geq \dim_{\C} G_k(U') + \dim_{\C} V_k$. Taking $U$ maximal (i.e. of dimension $\operatorname{Ind}_\C(q_{k+1})$) and using $\dim U - \dim U' \leq h^1(\mathcal{E}_k)$ and injectivity of $G_k$ yields
	\begin{equation*}
		\operatorname{Ind}_{\C}(q_k) \geq \operatorname{Ind}_\C(q_{k+1}) - h^1(\mathcal{E}_k) + \dim_{\C} V_k.
	\end{equation*}
	
	It remains to calculate $\dim_\C V_k.$ For $k = 0,$  $\dim V_0 = h^0(\mathcal{E}_0).$ For $1 \leq k \leq n-2,$ the long exact sequence associated to (\ref{eq:genses}) gives
	\begin{equation*}
		\dim_{\C}V_k = h^0(\mathcal{E}_k) - h^0(\mathcal{G}_k\otimes\mathcal{G}_{k+1}).
	\end{equation*}
	Hence, by additivity of the Euler characteristic,
	\begin{equation*}
		\begin{aligned}
			\dim_{\C}V_k-h^1(\mathcal{E}_k) & = \chi(\mathcal{E}_k) - h^0(\mathcal{G}_k\otimes\mathcal{G}_{k+1})                          \\
			 & = \chi(\mathcal{G}_k\otimes\mathcal{Q}_{k+2}) - h^1(\mathcal{G}_k\otimes\mathcal{G}_{k+1}).
		\end{aligned}
	\end{equation*}
\end{proof}

\newpage

\begin{theorem}\label{thm:genlowbound}
	Let $f: \Sigma \to S^{2n}$ be a compact, linearly full, totally isotropic branched immersion, where $\Sigma$ has genus $g.$ Then
	\begin{equation*}
		\operatorname{Ind}_\R f \geq (n-2) \frac{\operatorname{Area}(f)}{\pi} + 2a_n + n(n-1)(1-g) - 2 \sum_{k=1}^{n-2} h^1(\mathcal{G}_k \otimes \mathcal{G}_{k+1}).
	\end{equation*}
	Equivalently,
	\begin{equation*}
		\operatorname{Ind}_\R f \geq (n-2)\frac{\operatorname{Area}(f)}{\pi} + n(n+3)(1-g) + 2 \sum_{j=1}^n b_j - 2 \sum_{k=1}^{n-2} h^1(\mathcal{G}_k \otimes \mathcal{G}_{k+1}).
	\end{equation*}
\end{theorem}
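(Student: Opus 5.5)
The plan is to iterate Proposition \ref{prop:inductivestep} down the chain of forms $q_{n-1},\dots,q_0$, starting from $\operatorname{Ind}_\C(q_{n-1})\ge 0$. That starting bound holds because $q_{n-1}(W)=\lVert\delbar_{\mathcal E_{n-1}}W\rVert^2\ge 0$. Applying the $k$-th inequality for $k=n-2,n-3,\dots,1$ and then the $k=0$ inequality, and summing (a telescoping sum), gives
\begin{equation*}
\operatorname{Ind}_\C(q_0)\;\ge\;\chi(\mathcal N)+\sum_{k=1}^{n-2}\Big(\chi(\mathcal G_k\otimes\mathcal Q_{k+2})-h^1(\mathcal G_k\otimes\mathcal G_{k+1})\Big).
\end{equation*}
For $n=2$ the sum is empty and only the $k=0$ step is used. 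Next, $q_0=\tfrac14 I_f^\C$ and $I_f^\C$ is the Hermitian extension of $I_f$ to $\mathcal N\cong(\nu,J_\nu)$. So a complex negative subspace of complex dimension $d$ is a real negative subspace of dimension $2d$, and hence $\operatorname{Ind}_\R f\ge 2\operatorname{Ind}_\C(q_0)$.

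The remaining work is bookkeeping with Euler characteristics. By (\ref{eq:EuCharN}) and $a_1=2(1-g)+b_1$, we have $\chi(\mathcal N)=(n-1)(1-g)+\sum_{r=2}^n a_r$. Proposition \ref{prop:firstcohomvanish}, applied with $\ell=k+1$, gives
\[
\chi(\mathcal G_k\otimes\mathcal Q_{k+2})=(n-k-1)(1-g+a_k)+\sum_{j=k+2}^n a_j .
\]
The constant terms sum to $(1-g)\big[(n-1)+\tfrac12(n-1)(n-2)\big]=\tfrac12 n(n-1)(1-g)$.

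For the degree terms, I count the coefficient of each $a_m$:
\begin{itemize}
\item $a_1$ gets $n-2$;
\item for $2\le m\le n-2$, $a_m$ gets $1+(n-m-1)+(m-2)=n-2$;
\item $a_{n-1}$ gets $1+(n-3)=n-2$;
\item $a_n$ gets $1+(n-2)=n-1$.
\end{itemize}
So the degree terms total $(n-2)\sum_r a_r+a_n$. By (\ref{eq:areaformula}) this equals $(n-2)\operatorname{Area}(f)/(2\pi)+a_n$. Doubling gives the first inequality, with the $h^1$ terms carrying a factor $-2$.

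For the equivalent form, substitute $a_n=2n(1-g)+\sum_{j=1}^n b_j$ from (\ref{eq:generaldegbound}). Then $2a_n+n(n-1)(1-g)=n(n+3)(1-g)+2\sum_j b_j$. There is no real obstacle: the analytic content is entirely in Proposition \ref{prop:inductivestep}. The only places to be careful are the coefficient count in the double sum, especially the boundary cases $m=n-1,n$ and small $n$, and the real-versus-complex index factor of $2$.
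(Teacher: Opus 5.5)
Your proposal is correct and is essentially the paper's own proof. You iterate Proposition~\ref{prop:inductivestep} down from $\operatorname{Ind}_\C(q_{n-1})=0$, evaluate the Euler characteristics via (\ref{eq:EuCharN}) and (\ref{eq:EuCharGQ}), and collapse the degree terms to $(n-2)\operatorname{Area}(f)/(2\pi)+a_n$ using (\ref{eq:areaformula}); then you double and substitute (\ref{eq:generaldegbound}) for $a_n$, exactly as the paper does, and your explicit coefficient count for each $a_m$ (including the $n=2$ check) simply fills in arithmetic the paper states without detail.
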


\begin{proof}
	We have $q_{n-1}(W) = \lVert \delbar_{\mathcal{E}_{n-1}} W \rVert^2 \geq 0,$ so $\operatorname{Ind}_\C(q_{n-1}) = 0.$  Iterating Proposition \ref{prop:inductivestep} down from $k = n-2$ to $k=0$ therefore gives
	\begin{equation*}
		\operatorname{Ind}_{\C}(q_0) \geq \chi(\mathcal{N}) + \sum_{k=1}^{n-2} \left( \chi(\mathcal{G}_k \otimes \mathcal{Q}_{k+2}) - h^1(\mathcal{G}_k \otimes \mathcal{G}_{k+1}) \right).
	\end{equation*}
	Using the formulas (\ref{eq:EuCharN}) for $\chi(\mathcal{N})$ and (\ref{eq:EuCharGQ}) for $\chi(\mathcal{G}_k \otimes \mathcal{Q}_{k+2}),$ we obtain
	\begin{equation*}
		\begin{aligned}
			\operatorname{Ind}_{\C}(q_0) & \geq \sum_{r=2}^n(a_r+1-g) + \sum_{k=1}^{n-2} \sum_{r=k+2}^{n}(a_k + a_r + 1 -g) - \sum_{k=1}^{n-2} h^1(\mathcal{G}_k \otimes \mathcal{G}_{k+1}) \\
			                             & = \frac{n-2}{2} \frac{\operatorname{Area}(f)}{\pi} + a_n + \frac{n(n-1)}{2}(1-g) - \sum_{k=1}^{n-2} h^1(\mathcal{G}_k \otimes \mathcal{G}_{k+1}). 
		\end{aligned}
	\end{equation*}
	Doubling to account for $\R$ gives the first form of the bound, since $q_0 = \tfrac{1}{4} I^\C_f.$ The second form follows from using (\ref{eq:generaldegbound}) to express $a_n$ as $2n(1-g) + \sum_{j=1}^n b_j.$
\end{proof}

\subsection{The upper index bound}

\indent \indent The symmetry in the identity (\ref{eq:qkdiffid}) suggests reversing the strategy of the previous section to prove upper bounds on the Morse index. For this, we aim to extend a negative subspace for $q_{k}$ to a negative subspace for $q_{k+1}$, which involves solving a $\del$-equation.

\begin{prop}\label{prop:upperbound}
	Let $f: \Sigma \to S^{2n}$ be a compact, linearly full, totally isotropic branched immersion. For $0 \leq k \leq n-2,$ we have
	\begin{equation*}
		\operatorname{Ind}_\C(q_{k}) \leq h^0(\mathcal{K} \otimes \mathcal{E}_{k+1}) + \operatorname{Ind}_\C(q_{k+1}).
	\end{equation*}
\end{prop}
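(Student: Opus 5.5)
The plan mirrors the proof of Proposition \ref{prop:inductivestep}, but runs the identity (\ref{eq:qkdiffid}) in the opposite direction: starting from a negative subspace $U < \Gamma(\mathcal{E}_k)$ for $q_k$, we build a subspace of $\Gamma(\mathcal{E}_{k+1})$ that is negative for $q_{k+1}$. The dimension lost along the way will be controlled by the cokernel of a $\del$-operator, which is $h^0(\mathcal{K}\otimes\mathcal{E}_{k+1})$.

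First we identify the cokernel. The operator is $\partial_{\mathcal{E}_{k+1}} : \Gamma(\mathcal{E}_{k+1}) \to \Omega^{1,0}(\mathcal{E}_{k+1})$. By the Kähler identity $\partial^* = -\overline{\partial}$, its $L^2$-cokernel consists of the $\Omega^{1,0}(\mathcal{E}_{k+1})$-valued forms annihilated by $\delbar$, that is $H^0(\mathcal{K}\otimes\mathcal{E}_{k+1})$. Elliptic theory then gives the following: $\partial_{\mathcal{E}_{k+1}} Z = \beta$ is solvable exactly when $\beta \perp H^0(\mathcal{K}\otimes\mathcal{E}_{k+1})$, and the solution is unique in $(\ker \partial_{\mathcal{E}_{k+1}})^\perp$. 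Equivalently, by conjugation one can work with $\overline{\mathcal{E}_{k+1}}$ and ordinary Dolbeault theory. We only need that the cokernel has dimension $h^0(\mathcal{K}\otimes\mathcal{E}_{k+1})$.

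Next we define the map. Let $U$ be a maximal negative subspace for $q_k$, and define $\Omega : U \to H^0(\mathcal{K}\otimes\mathcal{E}_{k+1})^*$ by $W \mapsto$ the $L^2$-projection of $-P_{k+1}W$ onto that space. Set $U' = \ker\Omega$; its codimension in $U$ is at most $h^0(\mathcal{K}\otimes\mathcal{E}_{k+1})$. For $W \in U'$, let $Z = H_k(W) \in (\ker\partial_{\mathcal{E}_{k+1}})^\perp$ be the unique solution of $\partial_{\mathcal{E}_{k+1}} Z = -P_{k+1}W$. With this choice the negative term in (\ref{eq:qkdiffid}) vanishes, and we get
\begin{equation*}
q_{k+1}(H_k W) = q_k(W) - \lVert \overline{\partial}_{\mathcal{E}_k}W - P_{k+1}^\dagger H_k W\rVert^2 < 0 \quad \text{for } W \neq 0 .
\end{equation*}
The map $H_k$ is linear. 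It remains to show it is injective, so that $H_k(U')$ is a negative subspace of dimension $\dim U'$. This gives $\operatorname{Ind}_\C(q_{k+1}) \geq \operatorname{Ind}_\C(q_k) - h^0(\mathcal{K}\otimes\mathcal{E}_{k+1})$, which is the claim.

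The main obstacle is injectivity. If $H_kW = 0$, then $P_{k+1}W = 0$, and the definition of $q_k$ gives $q_k(W) = \lVert\delbar_{\mathcal{E}_k}W\rVert^2 \geq 0$. Since $W \in U$ and $U$ is negative, this forces $W = 0$. This is the exact analogue of the injectivity of $P_{k+1}^\dagger$ on $U$ in Proposition \ref{prop:inductivestep}, so the argument should close. The remaining care is routine: $\Phi_{k+1}$ may vanish at ramification points, but $P_{k+1}$ is smooth, so the elliptic theory is unaffected. Since $\operatorname{Ind}_\C(q_k)$ may a priori be taken as the supremum over finite-dimensional negative subspaces, we apply the construction to an arbitrary finite-dimensional $U$ rather than assuming a maximal one exists.
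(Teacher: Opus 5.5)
Your proposal is correct and follows the paper's proof essentially step for step. You restrict to the kernel of the $L^2$-projection of $P_{k+1}W$ onto $H^0(\mathcal{K}\otimes\mathcal{E}_{k+1})$, solve $\partial_{\mathcal{E}_{k+1}}Z=-P_{k+1}W$ in $(\ker\partial_{\mathcal{E}_{k+1}})^\perp$, get injectivity from $P_{k+1}W=0\Rightarrow q_k(W)\geq 0$, and get negativity from (\ref{eq:qkdiffid}); the only slip is cosmetic, writing the target of $\Omega$ as the dual $H^0(\mathcal{K}\otimes\mathcal{E}_{k+1})^*$ instead of $H^0(\mathcal{K}\otimes\mathcal{E}_{k+1})$, which does not affect the dimension count.
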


\begin{proof}
	Let $U < \Gamma(\mathcal{E}_k)$ be a negative subspace for $q_k.$ Define a map $\Theta : U \to H^0(\mathcal{K} \otimes \mathcal{E}_{k+1})$ by $\Theta(W) = \pi_{H^0(\mathcal{K} \otimes \mathcal{E}_{k+1})} (P_{k+1} W)$ and let $U' = \ker \Theta.$ The codimension of $U'$ in $U$ is at most $h^0(\mathcal{K} \otimes \mathcal{E}_{k+1}).$
	
	If $W \in U',$ then $P_{k+1} W$ is orthogonal to $\ker(\delbar_{\mathcal{K} \otimes \mathcal{E}_{k+1}}),$ so by Dolbeault theory there is a unique $Z \in \ker (\del_{\mathcal{E}_{k+1}})^\perp$ with $\del_{\mathcal{E}_{k+1}} Z = -P_{k+1} W$. Denote the map $W \mapsto Z$ by $S: U' \to \ker (\del_{\mathcal{E}_{k+1}})^\perp.$ The map $S$ is injective because if $Z = 0,$ then $P_{k+1} W = 0$ and $0 \geq q_k(W) = \lVert \delbar_{\mathcal{E}_k} W \rVert^2 \geq 0,$ so $q_k(W) = 0$ and hence $W = 0$ by the fact $q_{k}$ is negative-definite on $U'.$
	
	We claim that $S(U')$ is a negative subspace for $q_{k+1}.$ This is because for $Z = S(W)$ with $W \in U'$ non-zero we have, by (\ref{eq:qkdiffid}),
	\begin{equation*}
		q_{k+1}(Z) = q_k(W) - \lVert \delbar_{\mathcal{E}_k} W - P_{k+1}^\dagger Z \rVert^2 < 0.
	\end{equation*}
	Therefore $\operatorname{Ind}_\C (q_{k+1}) \geq \dim_{\C}(U') \geq \dim_{\C}(U) - h^0(\mathcal{K} \otimes \mathcal{E}_{k+1}).$ Taking $U$ maximal, i.e. of dimension $\operatorname{Ind}_\C(q_k),$ gives the result.
\end{proof}

\begin{theorem}\label{thm:upperbound}
	Let $f: \Sigma \to S^{2n}$ be a compact, linearly full, totally isotropic branched immersion, where $\Sigma$ has genus $g.$ Then
	\begin{equation*}
		\operatorname{Ind}_\R f \leq (n-1) \left( \frac{\operatorname{Area}(f)}{\pi} - n(1-g) \right) + 2 \sum_{k=0}^{n-2} h^1(\mathcal{K} \otimes \mathcal{E}_{k+1}).
	\end{equation*}
\end{theorem}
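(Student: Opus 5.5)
The plan is to iterate Proposition \ref{prop:upperbound} upward from $k=0$ to $k=n-2$, using that $q_{n-1}(W) = \lVert \delbar_{\mathcal{E}_{n-1}} W\rVert^2 \geq 0$, so $\operatorname{Ind}_\C(q_{n-1}) = 0$. Chaining the inequalities gives
\begin{equation*}
	\operatorname{Ind}_\C(q_0) \leq \sum_{k=0}^{n-2} h^0(\mathcal{K}\otimes\mathcal{E}_{k+1}).
\end{equation*}
Since $q_0 = \tfrac14 I_f^\C$ and the Hermitian extension doubles real dimension, we get $\operatorname{Ind}_\R f = 2\operatorname{Ind}_\C(q_0)$. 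It then remains to convert the $h^0$ terms into Euler characteristics plus $h^1$ corrections.

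For this conversion, apply Riemann--Roch (additivity of $\chi$) to each twisted bundle:
\begin{equation*}
	h^0(\mathcal{K}\otimes\mathcal{E}_{k+1}) = \chi(\mathcal{K}\otimes\mathcal{E}_{k+1}) + h^1(\mathcal{K}\otimes\mathcal{E}_{k+1}).
\end{equation*}
By Proposition \ref{prop:Filtration-General}(a), or directly from rank and degree,
\begin{equation*}
	\chi(\mathcal{K}\otimes\mathcal{E}_{k+1}) = \deg\mathcal{E}_{k+1} + (n-k-1)(2g-2) + (n-k-1)(1-g) = \deg \mathcal{E}_{k+1} - (n-k-1)(1-g).
\end{equation*}
Here $\mathcal{E}_{k+1} = \mathcal{G}_{k+1}\otimes\mathcal{Q}_{k+2}$ has rank $n-k-1$ and degree $\sum_{j=k+2}^n (a_{k+1}+a_j)$, read off from (\ref{eq:EuCharGQ}) or from the filtration in Proposition \ref{prop:firstcohomvanish}. (Equivalently, by Serre duality this $\chi$ is $-\chi(\mathcal{E}_{k+1}^*)$, but the direct computation suffices.)

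The main bookkeeping step is to sum over $m = k+1$ from $1$ to $n-1$ and show
\begin{equation*}
	\sum_{m=1}^{n-1}\sum_{j=m+1}^n (a_m + a_j) = (n-1)\sum_{j=1}^n a_j .
\end{equation*}
This holds because each unordered pair $\{m,j\}$ with $m<j$ appears once, so each $a_i$ occurs exactly $n-1$ times. By (\ref{eq:areaformula}) the right side equals $(n-1)\operatorname{Area}(f)/(2\pi)$. The constant terms contribute $-(1-g)\sum_{m=1}^{n-1}(n-m) = -\tfrac{n(n-1)}{2}(1-g)$. Doubling yields
\begin{equation*}
	\operatorname{Ind}_\R f \leq (n-1)\left(\frac{\operatorname{Area}(f)}{\pi} - n(1-g)\right) + 2\sum_{k=0}^{n-2} h^1(\mathcal{K}\otimes\mathcal{E}_{k+1}),
\end{equation*}
which is the stated bound.

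The only potential subtlety is the degree of $\mathcal{E}_{k+1}$ in the extreme case $k+1=n-1$, where $\mathcal{Q}_n=\mathcal{G}_n$ and the formula still applies. One should also confirm that Proposition \ref{prop:upperbound} covers $k=n-2$ with $P_{n-1}$ as defined. Beyond that, the argument is a direct combination of results already established, and I expect no real obstacle other than checking the arithmetic of the double sum.
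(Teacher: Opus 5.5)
Your proposal is correct and follows the paper's proof essentially step for step. You chain Proposition \ref{prop:upperbound} with $\operatorname{Ind}_\C(q_{n-1})=0$, write each $h^0(\mathcal{K}\otimes\mathcal{E}_{k+1})$ as $\chi+h^1$, evaluate $\chi(\mathcal{K}\otimes\mathcal{E}_{k+1})=\sum_{r=k+2}^n(a_{k+1}+a_r+g-1)$, sum over pairs using (\ref{eq:areaformula}), and double. The only difference is that you get this $\chi$ from rank/degree Riemann--Roch while the paper uses the line-bundle filtration, and the two are equivalent.
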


\begin{proof}
	Induction from Proposition \ref{prop:upperbound} together with the fact that $\operatorname{Ind}_\C(q_{n-1}) = 0$, since $q_{n-1}(W) = \lVert \delbar_{\mathcal{E}_{n-1}} W \rVert^2,$ implies
	\begin{equation}\label{eq:IndCupperhzero}
		\operatorname{Ind}_\C(q_0) \leq \sum_{k=0}^{n-2} h^0(\mathcal{K} \otimes \mathcal{E}_{k+1}).
	\end{equation}
	The bundle $\mathcal{K} \otimes \mathcal{E}_{k+1}$ has a filtration with successive line bundle quotients $\mathcal{K} \otimes \mathcal{G}_{k+1} \otimes \mathcal{G}_r$ for $r = k+2, \ldots, n,$ so additivity of the Euler characteristic gives
	\begin{equation*}
		\chi(\mathcal{K} \otimes \mathcal{E}_{k+1}) = \sum_{r=k+2}^n(a_{k+1} + a_r + g - 1).
	\end{equation*}
	The right-hand-side of (\ref{eq:IndCupperhzero}) may then be re-written as
	\begin{equation*}
		\begin{aligned}
			    & \sum_{k=0}^{n-2} \left( \chi(\mathcal{K} \otimes \mathcal{E}_{k+1}) + h^1(\mathcal{K} \otimes \mathcal{E}_{k+1}) \right)              \\
			=\, & \sum_{1 \leq j < r \leq n}(a_j+a_r+g-1) + \sum_{k=0}^{n-2} h^1(\mathcal{K} \otimes \mathcal{E}_{k+1})                                 \\
			=\, & \frac{n-1}{2} \left( \frac{\operatorname{Area}(f)}{\pi} - n(1-g) \right) + \sum_{k=0}^{n-2} h^1(\mathcal{K} \otimes \mathcal{E}_{k+1}) 
		\end{aligned}
	\end{equation*}
	where we have used (\ref{eq:areaformula}) to simplify the first two terms of the second line. Doubling to account for $\R$ gives the result, since $q_0 = \tfrac{1}{4} I^\C_f.$ 
\end{proof}

\subsection{Nullity bounds}

\indent \indent The same strategy used to produce the upper and lower index bounds of Theorems \ref{thm:upperbound} and \ref{thm:genlowbound} may be used to produce bounds on the nullity of a compact, linearly full, totally isotropic branched immersion $f: \Sigma \to S^{2n}.$

\begin{definition}
    Let $\mathcal{J}_k$ be the operator on $L^2(\mathcal{E}_k)$ associated to the quadratic form $q_k,$ so that
    \begin{equation*}
        \llangle \mathcal{J}_k W, W \rrangle = q_k(W).
    \end{equation*}
    A \emph{$q_k$-Jacobi field} is a section $W \in L^2(\mathcal{E}_k)$ in $\ker \mathcal{J}_k.$ The \emph{$q_k$-nullity} is the dimension $\nu_k = \dim \ker \mathcal{J}_k$ of the space of $q_k$-Jacobi fields.
\end{definition}

From Definition \ref{defn:qkform} and Proposition \ref{prop:qkequivform}, a $q_k$-Jacobi field satisfies
\begin{equation}\label{eq:qkJacobiidentities}
    \delbar^*_{\mathcal{E}_k} \delbar_{\mathcal{E}_k} W = P_{k+1}^* P_{k+1} W, \quad \del^*_{\mathcal{E}_k} \del_{\mathcal{E}_k} W = (P_k^\dagger)^* P^\dagger_k W.
\end{equation}

Holomorphicity of $\Phi_{k+1}$ implies $P_{k+1} H^0(\mathcal{E}_k) < H^0(\mathcal{K} \otimes \mathcal{E}_{k+1}),$ while Proposition \ref{prop:commutident} implies $P^\dagger_{k+1} \ker(\del_{\mathcal{E}_{k+1}}) < H^1(\mathcal{E}_{k}).$ Let
\begin{equation}
    c_k = \dim_\C \frac{H^0(\mathcal{K} \otimes \mathcal{E}_{k+1})}{P_{k+1} H^0(\mathcal{E}_k)}, \quad d_k = \dim_\C \frac{H^1(\mathcal{E}_{k})}{P^\dagger_{k+1} \ker(\del_{\mathcal{E}_{k+1}})}.
\end{equation}
Let $\mathcal{A}_k$ denote the bundle $\mathcal{G}_k \otimes \mathcal{G}_{k+1}$ for $1 \leq k \leq n-1$ and set $\mathcal{A}_0 = 0.$

\begin{lemma}\label{lem:ckdkcalc}
    Let $f: \Sigma \to S^{2n}$ be a compact, linearly full, totally isotropic branched immersion. For $0 \leq k \leq n-2,$ we have
    \begin{equation*}
        c_k + d_k = (n-k-1)b_{k+1} + h^1(\mathcal{A}_k).
    \end{equation*}
\end{lemma}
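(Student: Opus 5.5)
The identity should come from a holomorphic reinterpretation of $P_{k+1}$ together with a hypercohomology count. The first step is to observe that $P_{k+1}$ is induced by a \emph{holomorphic} bundle map
\begin{equation*}
	\mathcal{P}_{k+1} : \mathcal{E}_k \to \mathcal{K}\otimes\mathcal{E}_{k+1}.
\end{equation*}
For $1\le k\le n-2$ this map is the composite $\Phi_{k+1}\otimes\Pi_{>k+1}$, where $\Phi_{k+1} : \mathcal{G}_k \to \mathcal{K}\otimes\mathcal{G}_{k+1}$ is a holomorphic section. The projection $\Pi_{>k+1}:\mathcal{Q}_{k+1}\to\mathcal{Q}_{k+2}$ is the holomorphic quotient map of (\ref{eq:Qses}), even though it was written via the smooth splitting. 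For $k=0$ the map is simply $\Phi_1 : \mathcal{N}\to\mathcal{K}\otimes\mathcal{G}_1\otimes\mathcal{Q}_2$. As a map of sheaves, $\mathcal{P}_{k+1}$ therefore has kernel $\mathcal{G}_k\otimes\mathcal{G}_{k+1}=\mathcal{A}_k$ (and $0$ when $k=0$). Its cokernel is the torsion sheaf $\mathcal{T}=\operatorname{coker}(\Phi_{k+1}\otimes\mathrm{Id}_{\mathcal{Q}_{k+2}})$, supported on $B_{k+1}$, of length $\operatorname{rk}(\mathcal{Q}_{k+2})\cdot b_{k+1}=(n-k-1)b_{k+1}$. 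This gives an exact sequence
\begin{equation*}
	0\to\mathcal{A}_k\to\mathcal{E}_k\to\mathcal{K}\otimes\mathcal{E}_{k+1}\to\mathcal{T}\to 0 .
\end{equation*}

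The second step is to identify $c_k$ and $d_k$ with the cokernel and kernel of the maps that $\mathcal{P}_{k+1}$ induces on cohomology. For $c_k$ this is immediate: $c_k=\dim\operatorname{coker}\big(H^0(\mathcal{E}_k)\to H^0(\mathcal{K}\otimes\mathcal{E}_{k+1})\big)$. For $d_k$ I would show
\begin{equation*}
	P^\dagger_{k+1}\ker(\del_{\mathcal{E}_{k+1}}) = \ker\big(H^1(\mathcal{E}_k)\to H^1(\mathcal{K}\otimes\mathcal{E}_{k+1})\big)^{\perp_{\mathrm{harm}}}
\end{equation*}
inside the harmonic representatives, or equivalently
\begin{equation*}
	d_k=\dim\ker\big(H^1(\mathcal{E}_k)\to H^1(\mathcal{K}\otimes\mathcal{E}_{k+1})\big).
\end{equation*}
The plan here is to use Hodge theory. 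A class in $H^1(\mathcal{E}_k)$ is represented by $\alpha\in\Omega^{0,1}(\mathcal{E}_k)$ with $\delbar^*\alpha=0$. Such a class is orthogonal to every $P^\dagger_{k+1}Z$ with $\del Z=0$ exactly when $P_{k+1}\alpha$ is orthogonal to $\ker\del_{\mathcal{E}_{k+1}}$, after twisting by $\mathcal{K}$ and conjugating. By the $L^2$ decomposition, that orthogonality holds iff $P_{k+1}\alpha\in\operatorname{im}\delbar$, i.e. iff $[P_{k+1}\alpha]=0$ in $H^1(\mathcal{K}\otimes\mathcal{E}_{k+1})$. This is the same mechanism as Proposition \ref{prop:commutident}. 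I expect this step to be the main obstacle, since the bookkeeping of adjoints, the $(1,0)$/$(0,1)$ types and the $\mathcal{K}$-twist must match precisely.

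The final step is a hypercohomology count for the two-term complex $C^\bullet=[\mathcal{E}_k\to\mathcal{K}\otimes\mathcal{E}_{k+1}]$ placed in degrees $0,1$. The spectral sequence starting from the cohomology groups of the two terms gives
\begin{equation*}
	0\to\operatorname{coker}(H^0\text{-map})\to\mathbb{H}^1(C^\bullet)\to\ker(H^1\text{-map})\to 0,
\end{equation*}
so $\dim\mathbb{H}^1=c_k+d_k$. The spectral sequence starting from the cohomology sheaves $\mathcal{A}_k$ (degree $0$) and $\mathcal{T}$ (degree $1$) gives
\begin{equation*}
	0\to H^1(\mathcal{A}_k)\to\mathbb{H}^1(C^\bullet)\to H^0(\mathcal{T})\to H^2(\mathcal{A}_k)=0,
\end{equation*}
using $H^1(\mathcal{T})=0$ because $\mathcal{T}$ is torsion. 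Hence $\dim\mathbb{H}^1=h^1(\mathcal{A}_k)+(n-k-1)b_{k+1}$. Comparing the two counts gives the lemma. If one prefers to avoid hypercohomology, the same count follows from splitting the four-term sequence through the image sheaf and chasing the two resulting long exact sequences.
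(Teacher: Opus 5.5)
Your proof is correct, but it takes a different route from the paper's.

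\textbf{The paper's route.} The paper never forms the four-term sequence or any hypercohomology. It computes the two quantities separately:
\begin{itemize}
\item $c_k = h^0(\mathcal{K}\otimes\mathcal{E}_{k+1}) - h^0(\mathcal{E}_k) + h^0(\mathcal{A}_k)$, because the kernel of $P_{k+1}$ on $H^0(\mathcal{E}_k)$ is $H^0(\mathcal{A}_k)$;
\item $d_k = h^1(\mathcal{E}_k) - h^1(\mathcal{K}\otimes\mathcal{E}_{k+1})$, using injectivity of $P^\dagger_{k+1}$, the equality $\dim\ker(\partial_{\mathcal{E}_{k+1}}) = h^0(\mathcal{E}_{k+1}^*)$, and Serre duality.
\end{itemize}
It then applies Riemann--Roch. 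The ramification term comes from the degree identity $\deg(\mathcal{K}\otimes\mathcal{E}_{k+1}) = \deg\mathcal{E}_k - \deg\mathcal{A}_k + (n-k-1)b_{k+1}$, which is read off from $\mathcal{K}\otimes\mathcal{E}_{k+1}\cong(\mathcal{G}_k\otimes\mathcal{Q}_{k+2})\otimes(\mathcal{K}\otimes\mathcal{G}_k^*\otimes\mathcal{G}_{k+1})$.

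\textbf{Your route.} You package the same inputs homologically. You identify $c_k+d_k$ as $\dim\mathbb{H}^1$ of the complex $[\mathcal{E}_k\to\mathcal{K}\otimes\mathcal{E}_{k+1}]$ defined by $P_{k+1}$. The term $(n-k-1)b_{k+1}$ then appears as the length of the torsion cokernel rather than as a difference of degrees, so no Riemann--Roch is needed. This gives a more intrinsic explanation of where each term comes from, at the cost of spectral-sequence machinery. The paper's count is more elementary.

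\textbf{Shortening your Step 2.} The step you flagged as the main obstacle can be bypassed at the level of dimensions. Your four-term sequence already shows that $H^1(\mathcal{E}_k)\to H^1(\mathcal{K}\otimes\mathcal{E}_{k+1})$ is surjective: factor through the image sheaf and use $H^2(\mathcal{A}_k)=0$ and $H^1(\mathcal{T})=0$. Hence its kernel has dimension $h^1(\mathcal{E}_k)-h^1(\mathcal{K}\otimes\mathcal{E}_{k+1})$, which is exactly the paper's value of $d_k$. You therefore never need to match the orthogonal complement precisely.

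That said, your Hodge-theoretic identification is also correct. Proposition \ref{prop:commutident} shows that $P^\dagger_{k+1}\ker(\partial_{\mathcal{E}_{k+1}})$ consists of $\overline{\partial}^*$-closed, hence harmonic, $(0,1)$-forms. Via the pairing of $\Omega^{1,1}(\mathcal{E}_{k+1})$ with $\ker(\partial_{\mathcal{E}_{k+1}})\,dA$, its harmonic orthogonal complement is exactly the kernel of the $H^1$-map.

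One minor point: the vanishing of $H^1(\mathcal{T})$ is not needed to compute $\mathbb{H}^1$ from the second spectral sequence. Only $H^2(\mathcal{A}_k)=0$ enters there; $H^1(\mathcal{T})=0$ matters only for the surjectivity just described.
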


\begin{proof}
    Since multiplication by $\Phi_{k+1}$ is injective, the kernel of $P_{k+1}$ restricted to $H^0(\mathcal{E}_k)$ is equal to the kernel of $\mathrm{Id} \otimes \Pi_{>k+1}$ restricted to $H^0(\mathcal{E}_k),$ which is $H^0(\mathcal{A}_k).$ Therefore,
    \begin{equation*}
        c_k = h^0(\mathcal{K} \otimes \mathcal{E}_{k+1}) - h^0(\mathcal{E}_k) + h^0(\mathcal{A}_k).
    \end{equation*}
    Similarly, the map $P_{k+1}^\dagger$ is injective on $\Gamma(\mathcal{E}_{k+1}),$ so, by Serre duality,
    \begin{equation*}
        d_k =h^1(\mathcal{E}_k) - h^0(\mathcal{E}^*_{k+1}) = h^1(\mathcal{E}_k) - h^1(\mathcal{K} \otimes \mathcal{E}_{k+1}).
    \end{equation*}
    Combining terms, we have
    \begin{equation*}
        c_k + d_k = \chi(\mathcal{K} \otimes \mathcal{E}_{k+1}) - \chi(\mathcal{E}_k) + \chi(\mathcal{A}_k) + h^1(\mathcal{A}_k).
    \end{equation*}
    Applying Riemann-Roch to each Euler characteristic, the rank terms cancel and we get
    \begin{equation*}
        c_k + d_k = \deg (\mathcal{K} \otimes \mathcal{E}_{k+1}) - \deg (\mathcal{E}_k) + \deg (\mathcal{A}_k) + h^1(\mathcal{A}_k).
    \end{equation*}
    The short exact sequence
    \begin{equation*}
        0 \to \mathcal{A}_k \to \mathcal{E}_k \to \mathcal{G}_k \otimes \mathcal{Q}_{k+2} \to 0
    \end{equation*}
    and the isomorphism $\mathcal{K} \otimes \mathcal{E}_{k+1} \cong (\mathcal{G}_k \otimes \mathcal{Q}_{k+2}) \otimes (\mathcal{K} \otimes \mathcal{G}_k^* \otimes \mathcal{G}_{k+1})$ give
    \begin{equation*}
    \begin{aligned}
        \deg(\mathcal{K} \otimes \mathcal{E}_{k+1}) &= \deg (\mathcal{G}_k \otimes \mathcal{Q}_{k+2}) + (n-k-1) \deg (\mathcal{K} \otimes \mathcal{G}_k^* \otimes \mathcal{G}_{k+1}) \\
        &= \deg(\mathcal{E}_k) - \deg(\mathcal{A}_k) + (n-k-1) b_{k+1},
    \end{aligned}
    \end{equation*}
    completing the proof.
\end{proof}

The next proposition is the analogue for the nullity of Propositions \ref{prop:inductivestep} and \ref{prop:upperbound}. The method of proof is essentially the same.

\begin{prop}\label{prop:nullityinduction}
    Let $f: \Sigma \to S^{2n}$ be a compact, linearly full, totally isotropic branched immersion. For $0 \leq k \leq n-2,$
    \begin{enumerate}[label=(\alph*)]
        \item \begin{equation}
            \nu_k \geq \nu_{k+1} + \chi(\mathcal{A}_k) - (n-k-1)b_{k+1}
        \end{equation}
        \item
            \begin{equation}
                \nu_k \leq \nu_{k+1} + h^0(\mathcal{A}_k) + h^1(\mathcal{A}_k) + (n-k-1)b_{k+1}
            \end{equation}
    \end{enumerate}
\end{prop}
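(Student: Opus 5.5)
Both inequalities come from the identity (\ref{eq:qkdiffid}), read on kernels rather than on negative subspaces. The first step is to describe $q_k$-Jacobi fields in terms of $q_{k+1}$-Jacobi fields and cohomology. Since $q_{k+1}\ge$ its value on the relevant spaces only through (\ref{eq:qkdiffid}), I will use the Euler--Lagrange equations (\ref{eq:qkJacobiidentities}) directly.

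For (a), take a $q_{k+1}$-Jacobi field $Z$. I want to produce a $q_k$-Jacobi field $W$ with $\delbar_{\mathcal E_k}W=P_{k+1}^\dagger Z$ and $\del_{\mathcal E_{k+1}}Z+P_{k+1}W=0$. The second equation says $W$ "lifts" $Z$. By (\ref{eq:qkJacobiidentities}) for $q_{k+1}$, we have $\del^*\del Z=(P_{k+1}^\dagger)^*P_{k+1}^\dagger Z$. Combined with Proposition \ref{prop:commutident}, this gives $\delbar^*(P^\dagger_{k+1}Z)=-P_{k+1}^*\del Z$. So the class $[P^\dagger_{k+1}Z]\in H^1(\mathcal E_k)$ is the obstruction to solving the first equation. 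The obstruction to the second is the component of $\del Z$ in $H^0(\mathcal K\otimes\mathcal E_{k+1})/P_{k+1}H^0(\mathcal E_k)$. This is exactly where $c_k$ and $d_k$ enter.

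My plan is to set up linear maps. The first is a map from $\ker\mathcal J_{k+1}$ into an obstruction space of dimension at most $c_k+d_k$. On its kernel, after correcting by holomorphic sections, a pair solving both first-order equations yields $W$ with $q_k(W)-q_{k+1}(Z)=0$. One then checks that $W$ is $q_k$-Jacobi. The second is a map $H^0(\mathcal A_k)\hookrightarrow\ker\mathcal J_k$: holomorphic sections of $\mathcal E_k$ killed by $P_{k+1}$ satisfy $\delbar W=0$ and $P_{k+1}W=0$, hence are Jacobi by (\ref{eq:qkJacobiidentities}).

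Counting then gives $\nu_k\ge\nu_{k+1}-(c_k+d_k)+h^0(\mathcal A_k)$. Lemma \ref{lem:ckdkcalc} turns this into $\nu_{k+1}+\chi(\mathcal A_k)-(n-k-1)b_{k+1}$. Conversely, for (b), I map a $q_k$-Jacobi field $W$ to $Z$ by solving $\del Z=-P_{k+1}W$ modulo $\ker\del$. The obstruction lies in $H^0(\mathcal K\otimes\mathcal E_{k+1})$, but the part coming from $P_{k+1}H^0(\mathcal E_k)$ can be absorbed, leaving $c_k$. I then verify $Z$ is $q_{k+1}$-Jacobi up to a $d_k$-dimensional correction. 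The kernel of $W\mapsto Z$ consists of $W$ with $P_{k+1}W$ holomorphic-type, which I expect to be controlled by $H^0(\mathcal A_k)$. This yields $\nu_k\le\nu_{k+1}+c_k+d_k+h^0(\mathcal A_k)$, and Lemma \ref{lem:ckdkcalc} gives (b).

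The main obstacle is the verification that the constructed $W$ (respectively $Z$) actually satisfies the second-order Jacobi equation rather than merely solving one first-order equation. The key fact needed is the following: if $(W,Z)$ solves both first-order equations, then $W$ is $q_k$-Jacobi iff $Z$ is $q_{k+1}$-Jacobi. I would try to prove this by polarizing (\ref{eq:qkdiffid}). For such a pair, the bilinear forms satisfy $q_k(W,W')-q_{k+1}(Z,Z')=$ cross terms that vanish. Nondegeneracy considerations then transfer kernels. Handling the ambiguity by holomorphic sections (adding elements of $H^0(\mathcal E_k)$ or $\ker\del_{\mathcal E_{k+1}}$) is what produces the $h^0(\mathcal A_k)$ and $c_k,d_k$ terms, and needs careful bookkeeping.
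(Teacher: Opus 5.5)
Your outline has the right architecture: obstructions of total size $c_k+d_k$, an $H^0(\mathcal A_k)$ ambiguity, then Lemma~\ref{lem:ckdkcalc}. But the step that actually produces $c_k$ and $d_k$ is missing, and that step is exactly where the Jacobi equations must be used.

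In each part, the \emph{first} obstruction depends only on the given Jacobi field, and no freedom is available to reduce it.
In (a), the class $[P^\dagger_{k+1}Z]\in H^1(\mathcal E_k)$ must vanish outright. A priori this costs $h^1(\mathcal E_k)$, not $d_k$. You may not replace $Z$ by $Z-\gamma$ with $\gamma\in\ker\del_{\mathcal E_{k+1}}$, because $\mathcal J_{k+1}\gamma=-(P^\dagger_{k+1})^*P^\dagger_{k+1}\gamma\neq 0$ for $\gamma\neq0$.
In (b), the projection of $P_{k+1}W$ onto $H^0(\mathcal K\otimes\mathcal E_{k+1})$ must vanish. A priori this costs $h^0(\mathcal K\otimes\mathcal E_{k+1})$, not $c_k$. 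The $P_{k+1}H^0(\mathcal E_k)$ part cannot be ``absorbed'' by changing $W$, since $W+\alpha$ is not Jacobi unless $\alpha\in H^0(\mathcal A_k)$.

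The missing idea is that the Jacobi equation of the given field places this obstruction in the right orthogonal complement. By (\ref{eq:qkJacobiidentities}), for $\sigma\in\ker\del_{\mathcal E_{k+1}}$ and $\alpha\in H^0(\mathcal E_k)$,
\[
\llangle P^\dagger_{k+1}Z,P^\dagger_{k+1}\sigma\rrangle=\llangle\del_{\mathcal E_{k+1}}Z,\del_{\mathcal E_{k+1}}\sigma\rrangle=0,\qquad \llangle P_{k+1}W,P_{k+1}\alpha\rrangle=\llangle\delbar_{\mathcal E_k}W,\delbar_{\mathcal E_k}\alpha\rrangle=0.
\]
Since $P^\dagger_{k+1}\ker\del_{\mathcal E_{k+1}}$ consists of harmonic $(0,1)$-forms and $P_{k+1}H^0(\mathcal E_k)\subset H^0(\mathcal K\otimes\mathcal E_{k+1})$, the obstructions lie in complements of dimension $d_k$ and $c_k$ respectively. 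Note that the identity $\delbar^*_{\mathcal E_k}P^\dagger_{k+1}Z=-P^*_{k+1}\del_{\mathcal E_{k+1}}Z$ you derive holds for every $Z$; it is not where the Jacobi equation does its work.

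The second obstruction is then handled by genuine freedom: $W\mapsto W+\tau$ in (a), $Z\mapsto Z+\gamma$ in (b). This works only after you show that the residual is harmonic. In (a) the residual is $Y=\del_{\mathcal E_{k+1}}Z+P_{k+1}W$ with $W$ solving the first equation, not $\del Z$ alone. In (b) it is $X=\delbar_{\mathcal E_k}W-P^\dagger_{k+1}Z$. Harmonicity again uses the Jacobi equation of the given field together with Proposition~\ref{prop:commutident}.

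Without these facts your count loses $\dim P^\dagger_{k+1}\ker\del_{\mathcal E_{k+1}}=h^1(\mathcal K\otimes\mathcal E_{k+1})$ in (a) and $\dim P_{k+1}H^0(\mathcal E_k)$ in (b). Lemma~\ref{lem:ckdkcalc} then no longer gives the stated bounds.

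By contrast, the step you single out as the main obstacle is immediate and needs no ``iff'' or nondegeneracy transfer. If $(W,Z)$ solves both first-order equations, the polarized form of (\ref{eq:qkdiffid}) gives $q_k(W,W')=q_{k+1}(Z,Z')$ for independently chosen $W'$ and $Z'$. Taking $Z'=0$, then $W'=0$, shows that both $W$ and $Z$ are Jacobi. Equivalently, $\mathcal J_kW=\delbar^*_{\mathcal E_k}P^\dagger_{k+1}Z+P^*_{k+1}\del_{\mathcal E_{k+1}}Z=0$ by Proposition~\ref{prop:commutident}.

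For the final count you also need injectivity of $P^\dagger_{k+1}$ on $\Gamma(\mathcal E_{k+1})$. In (a) it makes $Z\mapsto W$ injective, with image meeting $H^0(\mathcal A_k)$ only in $0$. In (b) it makes $W\mapsto Z$ well defined, with kernel exactly $\ker\delbar_{\mathcal E_k}\cap\ker P_{k+1}=H^0(\mathcal A_k)$, rather than the ``$W$ with $P_{k+1}W$ holomorphic-type'' you describe.
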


\begin{proof}
    Any pair $(W,Z) \in \Gamma(\mathcal{E}_k) \oplus \Gamma(\mathcal{E}_{k+1})$ satisfying the coupled system of equations
        \begin{equation}\label{eq:nullityextension}
            \delbar_{\mathcal{E}_{k}} W - P^\dagger_{k+1} Z = 0, \quad \del_{\mathcal{E}_{k+1}} Z + P_{k+1} W = 0.
        \end{equation}
    consists of a pair of $q_k$- and $q_{k+1}$-Jacobi fields. Indeed, by Proposition \ref{prop:commutident}, if $(W,Z)$ solves (\ref{eq:nullityextension}), then
        \begin{equation*}
            \begin{aligned}
                \mathcal{J}_k W &= \delbar_{\mathcal{E}_k}^* \delbar_{\mathcal{E}_{k}} W - P_{k+1}^* P_{k+1} W \\
                &= \left( \delbar_{\mathcal{E}_k}^* P^\dagger_{k+1} + P_{k+1}^* \del_{\mathcal{E}_{k+1}} \right)Z = 0.
            \end{aligned}
        \end{equation*}
        The calculation showing $\mathcal{J}_{k+1} Z = 0$ is similar.
    
        Let $U_{k} = \ker \mathcal{J}_k$ denote the space of $q_{k}$-Jacobi fields. For part (a), given $Z \in U_{k+1},$ we will try to construct $W \in \Gamma(\mathcal{E}_k)$ satisfying (\ref{eq:nullityextension}), while for part (b), given $W \in U_k,$ we will try to construct $Z \in \Gamma(\mathcal{E}_{k+1})$ satisfying (\ref{eq:nullityextension}).

        \begin{enumerate}[label=(\alph*)]
        \item We solve $\delbar_{\mathcal{E}_{k}} W - P^\dagger_{k+1} Z = 0$ first. In order to solve this equation, we need the class $[P_{k+1}^\dagger Z] \in H^1(\mathcal{E}_k)$ to vanish. Let $U'_{k+1}$ denote the subspace of $U_{k+1}$ for which this class vanishes. If $Z$ is a $q_{k+1}$-Jacobi field and if $\sigma \in \ker(\del_{\mathcal{E}_{k+1}}),$
        \begin{equation*}
            \begin{aligned}
                \llangle P_{k+1}^\dagger Z, P_{k+1}^\dagger \sigma \rrangle &= \llangle (P_{k+1}^\dagger)^* P_{k+1}^\dagger Z, \sigma \rrangle \\
                &= \llangle \del_{\mathcal{E}_{k+1}} Z, \del_{\mathcal{E}_{k+1}} \sigma \rrangle = 0,
            \end{aligned}
        \end{equation*}
        where we have used (\ref{eq:qkJacobiidentities}). Therefore, $P_{k+1}^\dagger U_{k+1}$ is orthogonal to $P_{k+1}^\dagger \ker(\del_{\mathcal{E}_{k+1}}),$ so the codimension of $U_{k+1}'$ in $U_{k+1}$ is at most $d_k.$

        For $Z \in U_{k+1}',$ choose the solution $W$ of $\delbar_{\mathcal{E}_{k}} W - P^\dagger_{k+1} Z = 0$ with $W \in \ker(\delbar_{\mathcal{E}_{k}} )^\perp$ and let $Y = \del_{\mathcal{E}_{k+1}} Z + P_{k+1} W.$ Proposition \ref{prop:commutident} and (\ref{eq:qkJacobiidentities}) give
        \begin{equation*}
            \begin{aligned}
                \del^*_{\mathcal{E}_{k+1}} Y &= \del^*_{\mathcal{E}_{k+1}} \del_{\mathcal{E}_{k+1}} Z + \del^*_{\mathcal{E}_{k+1}} P_{k+1} W \\
                &= (P_{k+1}^\dagger)^* P_{k+1}^\dagger Z + \del^*_{\mathcal{E}_{k+1}} P_{k+1} W \\
                &= \left( (P_{k+1}^\dagger)^* \delbar_{\mathcal{E}_{k}} + \del^*_{\mathcal{E}_{k+1}} P_{k+1} \right) W = 0,
            \end{aligned}
        \end{equation*}
        so $Y \in H^0(\mathcal{K} \otimes \mathcal{E}_{k+1}).$ Replacing $W$ by $W+\tau$ with $\tau \in H^0(\mathcal{E}_k)$ changes $Y$ by $P_{k+1} \tau.$ Let $U''_{k+1}$ denote the subspace of $U_{k+1}'$ for which the class of $Y$ in ${H^0(\mathcal{K} \otimes \mathcal{E}_{k+1})}/{P_{k+1} H^0(\mathcal{E}_k)}$ is $0.$ The codimension of $U''_{k+1}$ in $U'_{k+1}$ is at most $c_k.$ Given $Z \in U_{k+1}'',$ we can find $W' \in \Gamma(\mathcal{E}_k)$ satisfying both equations in (\ref{eq:nullityextension}), unique up to an element in $\ker(\delbar_{\mathcal{E}_k}) \cap \ker(P_{k+1}) = H^0(\mathcal{A}_k).$ Therefore, by Lemma \ref{lem:ckdkcalc},
        \begin{equation}
            \nu_k \geq \nu_{k+1} - c_k - d_k + h^0(\mathcal{A}_k) = \nu_{k+1} + \chi(\mathcal{A}_k) - (n-k-1)b_{k+1}.
        \end{equation}
        
        \item We solve $\del_{\mathcal{E}_{k+1}} Z + P_{k+1} W = 0$ first. In order to solve this equation, the orthogonal projection of $P_{k+1}W$ onto $H^0(\mathcal K \otimes \mathcal E_{k+1})$ must vanish. Let $V'_{k} < U_k$ denote the subspace of elements $W \in U_k$ for which this projection of $P_{k+1}W$ vanishes. If $W$ is a $q_k$-Jacobi field and if $\alpha \in H^0(\mathcal{E}_k),$ then
        \begin{equation*}
        \begin{aligned}
             \llangle P_{k+1} W, P_{k+1} \alpha \rrangle  &= \llangle P_{k+1}^* P_{k+1} W, \alpha \rrangle \\
             &= \llangle \delbar_{\mathcal{E}_k} W, \delbar_{\mathcal{E}_k} \alpha \rrangle = 0,
        \end{aligned}
        \end{equation*}
        where we have used (\ref{eq:qkJacobiidentities}). Therefore, $P_{k+1} U_k$ is orthogonal to $P_{k+1} H^0(\mathcal{E}_k),$ so the codimension of $V_k'$ in $U_k$ is at most $c_k.$

        For $W \in V_{k}',$ choose the solution $Z$ of $\del_{\mathcal{E}_{k+1}} Z + P_{k+1} W = 0$ with $Z \in \ker (\partial_{\mathcal E_{k+1}})^\perp$ and let $X = \delbar_{\mathcal{E}_{k}} W - P^\dagger_{k+1} Z.$ Proposition \ref{prop:commutident} and (\ref{eq:qkJacobiidentities}) give
        \begin{equation*}
            \begin{aligned}
                \delbar_{\mathcal{E}_k}^* X &= \delbar_{\mathcal{E}_k}^* \delbar_{\mathcal{E}_{k}} W - \delbar_{\mathcal{E}_k}^* P^\dagger_{k+1} Z \\
                &= P_{k+1}^* P_{k+1} W - \delbar_{\mathcal{E}_k}^* P^\dagger_{k+1} Z \\
                &= -\left( P_{k+1}^* \del_{\mathcal{E}_{k+1}} +  \delbar_{\mathcal{E}_k}^* P^\dagger_{k+1} \right) Z = 0,
            \end{aligned}
        \end{equation*}
        so $X \in H^1(\mathcal{E}_k).$ Replacing $Z$ by $Z + \gamma$ with $\gamma \in \ker (\partial_{\mathcal E_{k+1}})$ changes $X$ by $-P_{k+1}^\dagger \gamma.$ Let $V_{k}''$ denote the subspace of $V_k'$ for which the class of $X$ in ${H^1(\mathcal{E}_{k})}/({P^\dagger_{k+1} \ker(\del_{\mathcal{E}_{k+1}})})$ is $0.$ The codimension of $V_{k}''$ in $V_k'$ is at most $d_k.$ Given $W \in V_{k}'',$ we can find $Z' \in \Gamma(\mathcal{E}_{k+1})$ satisfying both equations in (\ref{eq:nullityextension}). The resulting map $V_{k}'' \to U_{k+1}$ has kernel contained in $H^0(\mathcal{A}_k),$ since if $Z' = 0$ then (\ref{eq:nullityextension}) gives $\delbar_{\mathcal{E}_k} W = 0$ and $P_{k+1} W = 0,$ so $W \in \ker(\delbar_{\mathcal{E}_k}) \cap \ker(P_{k+1}) = H^0(\mathcal{A}_k).$ Since $V_k''$ has codimension at most $c_k+d_k$ in $U_k,$ we have, using Lemma \ref{lem:ckdkcalc},
        \begin{equation*}
            \nu_{k+1} \geq \nu_k - c_k - d_k - h^0(\mathcal{A}_k) = \nu_k -h^0(\mathcal{A}_k) - h^1(\mathcal{A}_k) - (n-k-1)b_{k+1}.\qedhere
        \end{equation*}
    \end{enumerate}
\end{proof}

\begin{theorem}\label{thm:nullitybound}
    Let $f: \Sigma \to S^{2n}$ be a compact, linearly full, totally isotropic branched minimal immersion, where $\Sigma$ has genus $g.$ Then
    \begin{equation}
        \left\lvert \operatorname{Null}_\R(f) - 2 \sum_{k=1}^{n-1} h^0(\mathcal{G}_k \otimes \mathcal{G}_{k+1}) \right\rvert \leq 2 \sum_{j=1}^{n-1} (n-j) b_j + 2 \sum_{k=1}^{n-2} h^1(\mathcal{G}_k \otimes \mathcal{G}_{k+1}).
    \end{equation}
\end{theorem}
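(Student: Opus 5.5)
Telescope Proposition \ref{prop:nullityinduction} from $k=n-2$ down to $k=0$, starting from the top of the sequence. First I compute $\nu_{n-1}$ directly. Since $q_{n-1}(W) = \lVert \delbar_{\mathcal{E}_{n-1}} W\rVert^2 \geq 0$, every $q_{n-1}$-Jacobi field minimizes $q_{n-1}$, so $\ker \mathcal{J}_{n-1} = H^0(\mathcal{E}_{n-1})$. Here $\mathcal{E}_{n-1} = \mathcal{G}_{n-1}\otimes \mathcal{Q}_n = \mathcal{G}_{n-1}\otimes\mathcal{G}_n = \mathcal{A}_{n-1}$, so $\nu_{n-1} = h^0(\mathcal{A}_{n-1})$. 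This accounts for the $k=n-1$ term of the sum $\sum_{k=1}^{n-1} h^0(\mathcal{G}_k\otimes\mathcal{G}_{k+1})$.

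For the upper bound, I sum part (b) over $0\le k\le n-2$:
\begin{equation*}
\nu_0 \leq h^0(\mathcal{A}_{n-1}) + \sum_{k=0}^{n-2}\left( h^0(\mathcal{A}_k) + h^1(\mathcal{A}_k) + (n-k-1)b_{k+1}\right).
\end{equation*}
Since $\mathcal{A}_0=0$, the $k=0$ cohomology terms vanish. Reindexing with $j=k+1$ turns $\sum_{k=0}^{n-2}(n-k-1)b_{k+1}$ into $\sum_{j=1}^{n-1}(n-j)b_j$. The bound then reads $\nu_0 - \sum_{k=1}^{n-1}h^0(\mathcal{A}_k) \le \sum_j (n-j)b_j + \sum_{k=1}^{n-2}h^1(\mathcal{A}_k)$.

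For the lower bound, I sum part (a) and write $\chi(\mathcal{A}_k) = h^0(\mathcal{A}_k) - h^1(\mathcal{A}_k)$:
\begin{equation*}
\nu_0 \geq h^0(\mathcal{A}_{n-1}) + \sum_{k=1}^{n-2}\left(h^0(\mathcal{A}_k)-h^1(\mathcal{A}_k)\right) - \sum_{j=1}^{n-1}(n-j)b_j,
\end{equation*}
which gives the other side of the absolute-value inequality. Combining the two estimates yields $\lvert \nu_0 - \sum_{k=1}^{n-1}h^0(\mathcal{A}_k)\rvert \le \mathrm{R}(f) + \sum_{k=1}^{n-2}h^1(\mathcal{A}_k)$ in complex terms.

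The remaining step, and the one I expect to be the main subtlety, is converting $\nu_0$ into $\operatorname{Null}_\R(f)$. Since $q_0 = \tfrac14 I^\C_f$ is the Hermitian extension of the real form $I_f$ to $\mathcal{N}\cong \nu\otimes_\R\C$ with its $+i$ structure $J_\nu$, I identify $\Gamma(\nu)$ with $\Gamma(\mathcal{N})$ as real vector spaces via $V\mapsto$ its $(1,0)$-part. Under this identification the real Jacobi operator corresponds to $4\mathcal{J}_0$, which is complex linear. So the real kernel is the complex kernel $\ker\mathcal{J}_0$ regarded as a real space, and $\operatorname{Null}_\R(f) = 2\nu_0$. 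This is the same doubling used for the index bounds. I also need to check that $\mathcal{J}_0$ commutes with $J_\nu$, which holds because $J_\nu$ is parallel and the terms in (\ref{eq:secondvartotisotropic}) are $J_\nu$-invariant. For branched immersions, the identification of $\operatorname{Null}$ with the kernel on the extended bundle $\nu$ is taken from the Micallef cut-off discussion. Multiplying through by $2$ gives the stated inequality.
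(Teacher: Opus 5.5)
Your proposal is correct and is essentially the paper's proof. You compute $\nu_{n-1} = h^0(\mathcal{G}_{n-1}\otimes\mathcal{G}_n)$ from $q_{n-1}(W) = \lVert \delbar_{\mathcal{E}_{n-1}} W\rVert^2$, telescope both parts of Proposition \ref{prop:nullityinduction} using $\mathcal{A}_0 = 0$, and double via $\operatorname{Null}_\R(f) = 2\nu_0$. The paper takes that last identity as immediate from $q_0 = \tfrac14 I^\C_f$, whereas you justify it in a bit more detail through the $J_\nu$-invariance of the Jacobi operator.
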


\begin{proof}
    Since $q_0 = \tfrac{1}{4} I^\C_f,$ $\operatorname{Null}_\R(f) = 2 \nu_0.$ On the other hand, $q_{n-1}(W) = \lVert \delbar_{\mathcal{E}_{n-1}} W \rVert^2,$ so $\nu_{n-1} = h^0(\mathcal{E}_{n-1}) = h^0(\mathcal{G}_{n-1} \otimes \mathcal{G}_n).$ The result then follows by induction on the bounds of Proposition \ref{prop:nullityinduction}.
\end{proof}

\begin{prop}\label{prop:sumh0gkgk1}
    We have
    \begin{equation*}
        \sum_{k=1}^{n-1} h^0(\mathcal{G}_k \otimes \mathcal{G}_{k+1}) = \frac{\operatorname{Area}(f)}{2\pi} + (n^2-3)(1-g)-b_1 + \sum_{j=1}^{n-1} (n-j)b_j + \sum_{k=1}^{n-1} h^1(\mathcal{G}_k \otimes \mathcal{G}_{k+1}).
    \end{equation*}
\end{prop}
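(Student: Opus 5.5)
The identity should follow from Riemann--Roch applied term by term, combined with the area formula (\ref{eq:areaformula}) and the degree identity (\ref{eq:generaldegbound}); no analysis is needed.

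\emph{Step 1 (Riemann--Roch).} For each $1 \leq k \leq n-1$, the line bundle $\mathcal{G}_k \otimes \mathcal{G}_{k+1}$ has degree $a_k + a_{k+1}$. Riemann--Roch then gives
\begin{equation*}
h^0(\mathcal{G}_k \otimes \mathcal{G}_{k+1}) = a_k + a_{k+1} + (1-g) + h^1(\mathcal{G}_k \otimes \mathcal{G}_{k+1}).
\end{equation*}
Summing over $k$, the $h^1$ terms appear exactly as on the right-hand side of the statement. It therefore remains to show
\begin{equation*}
\sum_{k=1}^{n-1}(a_k+a_{k+1}) + (n-1)(1-g) = \frac{\operatorname{Area}(f)}{2\pi} + (n^2-3)(1-g) - b_1 + \sum_{j=1}^{n-1}(n-j)b_j .
\end{equation*}

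\emph{Step 2 (area formula).} The left-hand sum equals $2\sum_{j=1}^n a_j - a_1 - a_n$. By (\ref{eq:areaformula}), $\sum_{j=1}^n a_j = \operatorname{Area}(f)/2\pi$, so the sum becomes
\begin{equation*}
\frac{\operatorname{Area}(f)}{2\pi} + \sum_{j=2}^{n-1} a_j .
\end{equation*}

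\emph{Step 3 (degree identity).} Substitute $a_j = 2j(1-g) + \sum_{i \leq j} b_i$ from (\ref{eq:generaldegbound}).

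For the $(1-g)$ terms, we have $\sum_{j=2}^{n-1} 2j = n(n-1) - 2$. Adding the $(n-1)(1-g)$ from Step 1 gives a total of $(n^2-3)(1-g)$, as required.

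For the ramification terms, we must count, for each $i$, how many $j$ with $2 \leq j \leq n-1$ satisfy $j \geq i$:
\begin{itemize}
\item $b_1$ occurs $n-2$ times;
\item $b_i$ occurs $n-i$ times for $2 \leq i \leq n-1$;
\item $b_n$ does not occur.
\end{itemize}
So the $b$-contribution is $\sum_{i=1}^{n-1}(n-i)b_i - b_1$, which matches the statement.

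The only place care is needed is this bookkeeping in Step 3. In particular, the boundary index $i=1$ is exactly what produces the separate $-b_1$ term, and this should be checked explicitly, for instance against the case $n=2$, where both sides reduce to $h^0(\mathcal{G}_1 \otimes \mathcal{G}_2)$ computed directly.
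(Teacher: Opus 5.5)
Your proof is correct and takes the same approach as the paper: Riemann--Roch for each $\mathcal{G}_k \otimes \mathcal{G}_{k+1}$, followed by the area formula (\ref{eq:areaformula}) and the degree identity (\ref{eq:generaldegbound}) to rewrite $\sum_{k}(a_k + a_{k+1})$. Your coefficient counts check out, both the $(n^2-3)(1-g)$ total and the $b_i$ multiplicities that produce the separate $-b_1$.
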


\begin{proof}
    By Riemann-Roch, $h^0(\mathcal{G}_k \otimes \mathcal{G}_{k+1}) = a_k + a_{k+1} + 1 - g + h^1(\mathcal{G}_k \otimes \mathcal{G}_{k+1}),$ so
    \begin{equation*}
        \begin{aligned}
            \sum_{k=1}^{n-1} h^0(\mathcal{G}_k \otimes \mathcal{G}_{k+1}) &= \sum_{k=1}^{n-1} (a_k + a_{k+1}) + (n-1)(1-g) + \sum_{k=1}^{n-1} h^1(\mathcal{G}_k \otimes \mathcal{G}_{k+1}) \\
            &= \frac{\operatorname{Area}(f)}{2\pi} + (n^2-3)(1-g)-b_1 + \sum_{j=1}^{n-1} (n-j)b_j + \sum_{k=1}^{n-1} h^1(\mathcal{G}_k \otimes \mathcal{G}_{k+1}),
        \end{aligned}
    \end{equation*}
    where in the final equality we have used the area formula (\ref{eq:areaformula}) and (\ref{eq:generaldegbound}) to rewrite the sums involving $a_k$ and $a_{k+1}$ respectively.
\end{proof}

\section{Consequences in low genus}\label{sect:lowgenus}

\indent \indent We begin by reorganizing the bounds of \S\ref{sect:bounds}. Let
\begin{equation}
\begin{aligned}
    \mathrm{M}(f) &= (n-1)\left(\frac{\operatorname{Area}(f)}{\pi} - n(1-g) \right), \\
    \mathrm{N}(f) &= \frac{\operatorname{Area}(f)}{\pi} + 2(n^2-3)(1-g) -2b_1 + 2 h^1(\mathcal{G}_{n-1} \otimes \mathcal{G}_n),\\
    \mathrm{R}(f) &= \sum_{j=1}^{n-1} (n-j)b_j, \\
    \sigma_-(f) &= \sum_{k=1}^{n-2} h^1(\mathcal{G}_k \otimes \mathcal{G}_{k+1}), \\
    \sigma_+(f) &= \sum_{k=0}^{n-2} h^1(\mathcal{K} \otimes \mathcal{E}_{k+1}).
\end{aligned}
\end{equation}

Combining the bounds of Theorems \ref{thm:genlowbound} and \ref{thm:upperbound} and rewriting the area in terms of the $b_j$ using (\ref{eq:areaformularamified}) we get, for a linearly full, totally isotropic, branched minimal immersion $f: \Sigma \to S^{2n},$
\begin{equation}\label{eq:twosidedbound}
    \mathrm{M}(f) - 2 \mathrm{R}(f) - 2 \sigma_-(f) \leq \operatorname{Ind}_\R f \leq \mathrm{M}(f) + 2 \sigma_+(f).
\end{equation}
From this formula we see that ramification up to index $n-1$ decreases the lower bound, and specialness of the bundles $\mathcal{G}_k \otimes \mathcal{G}_{k+1}$ and $\mathcal{K} \otimes \mathcal{E}_{k+1}$ in the sense of Brill--Noether theory decrease the lower bound and increase the upper bound respectively.

Similarly, using this notation the nullity bound of Theorem \ref{thm:nullitybound} becomes, after using Proposition \ref{prop:sumh0gkgk1},
\begin{equation}\label{eq:twosidednullity}
    \mathrm{N}(f) \leq \operatorname{Null}_\R(f) \leq \mathrm{N}(f) + 4 \mathrm{R}(f) + 4 \sigma_-(f).
\end{equation}
Thus ramification up to index $n-1$ and specialness of $\mathcal{G}_k \otimes \mathcal{G}_{k+1}$ increase the upper bound on the nullity.

\subsection{The $g=0$ case}

\indent \indent Our bounds are strongest in the case $g=0,$ because in this case the $h^1$ terms $\sigma_-(f)$ and $\sigma_+(f)$ vanish for degree reasons. We also recall Calabi's Theorem \ref{thm:calabi} which allows us to reduce the hypotheses on our results from ``totally isotropic'' to ``minimal'' in this case.

\begin{theorem}\label{thm:S2lowerbound}
	Let $f: S^2 \to S^{2n}$ be a linearly full, branched minimal immersion. Then
	\begin{equation}\label{eq:S2lower}
		\operatorname{Ind}_\R f \geq (n-2) \frac{\operatorname{Area}(f)}{\pi} + n(n+3),
	\end{equation}
	with equality if and only if $f$ is twistor-equivalent to the Bor\r{u}vka immersion.
\end{theorem}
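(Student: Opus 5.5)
Since $\Sigma\cong S^2$, Calabi's Theorem \ref{thm:calabi} shows $f$ is totally isotropic, so Theorem \ref{thm:genlowbound} applies with $g=0$. The first step is to show $\sigma_-(f)=0$. For each $k$ we have $\deg(\mathcal{G}_k\otimes\mathcal{G}_{k+1}) = a_k+a_{k+1}\geq 2k+2(k+1)>-2$ by (\ref{eq:spheredegbound}), so Proposition \ref{prop:degreecohomobounds} gives $h^1(\mathcal{G}_k\otimes\mathcal{G}_{k+1})=0$. Using the second form of Theorem \ref{thm:genlowbound}, this yields
\begin{equation*}
\operatorname{Ind}_\R f \geq (n-2)\frac{\operatorname{Area}(f)}{\pi} + n(n+3) + 2\sum_{j=1}^n b_j \geq (n-2)\frac{\operatorname{Area}(f)}{\pi} + n(n+3),
\end{equation*}
which is (\ref{eq:S2lower}).

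For the equality case, equality forces two things: the inequality of Theorem \ref{thm:genlowbound} must be sharp, and $\sum_j b_j=0$. The second condition says every $\Phi_r$ is nowhere vanishing, so all $\Psi_r$ are unramified, and then $a_r=2r$ and $\operatorname{Area}(f)=2\pi n(n+1)$ by (\ref{eq:areaformularamified}). This is the minimal area. I would identify curves with all $b_j=0$ as exactly those twistor-equivalent to the Bor\r{u}vka immersion. The natural route is through the full twistor lift $\Psi:S^2\to Z$, or through $\Psi_n:S^2\to Z_1\subset\mathbb{CP}^{2n}$, which is linearly full and whose associated curves are unramified. A linearly full unramified rational curve in $\mathbb{CP}^{2n}$ has degree exactly $2n$, so it is a rational normal curve, and lying in the quadric $Z_1$ makes it $\SO(2n+1,\C)$-equivalent to the Veronese curve. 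The Bor\r{u}vka sphere is the one whose $\Psi_n$ is the standard Veronese curve. Equivalently, the minimal-area classification of Barbosa and Calabi says harmonic degree $n(n+1)/2$ means twistor-equivalence to the Bor\r{u}vka sphere. I would cite whichever of these matches the paper's definition of twistor-equivalence.

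For the converse, the Bor\r{u}vka immersion has all $b_j=0$. The index is unchanged under twistor-equivalence, since it depends only on the $b_j$ and the area in this argument. It remains to check that the lower bound is attained when $\mathrm{R}(f)=0$. The upper bound of Theorem \ref{thm:upperbound} needs $\sigma_+(f)=0$. The bundle $\mathcal{K}\otimes\mathcal{E}_{k+1}$ is filtered by the line bundles $\mathcal{K}\otimes\mathcal{G}_{k+1}\otimes\mathcal{G}_r$, whose degrees are at least $-2+2(k+1)+2r>-2$, so Proposition \ref{prop:Filtration-General} gives vanishing. The two-sided bound (\ref{eq:twosidedbound}) then collapses to $\operatorname{Ind}_\R f=\mathrm{M}(f)$ because $\mathrm{R}(f)=0$. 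With area $2\pi n(n+1)$ both sides equal $n(n-1)(2n+1)$, and a quick check confirms $(n-2)\cdot 2n(n+1)+n(n+3)=n(n-1)(2n+1)$.

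The main obstacle is the rigidity step: showing that all $b_j=0$ (equivalently, minimal area) forces twistor-equivalence to the Bor\r{u}vka sphere. I would reduce it to the rational-normal-curve classification of $\Psi_n$ as above.
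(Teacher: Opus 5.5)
Your proof of the inequality and of the forward half of the equality case follows the paper. You apply Theorem \ref{thm:genlowbound} at $g=0$ and kill the $h^1(\mathcal{G}_k\otimes\mathcal{G}_{k+1})$ terms by degree. Equality then forces $\sum_j b_j=0$; the paper phrases this as $a_n=2n$, which is equivalent by (\ref{eq:spheredegbound}). Hence $\operatorname{Area}(f)=2\pi n(n+1)$, and Barbosa's classification finishes the argument.

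\textbf{The converse.} Here you genuinely diverge. The paper imports two external results: the Ball--Madnick computation $\operatorname{Ind}_\R=n(n-1)(2n+1)$ for the Bor\r{u}vka sphere, and Karpukhin's invariance of the index under twistor deformations. You instead use the paper's own upper bound. At $g=0$ every $h^1(\mathcal{K}\otimes\mathcal{E}_{k+1})$ vanishes by degree, and $\sigma_-(f)=0$ as well. So (\ref{eq:twosidedbound}) gives $\operatorname{Ind}_\R f=\mathrm{M}(f)$ whenever $\mathrm{R}(f)=0$, and when all $b_j=0$ this equals the right side of (\ref{eq:S2lower}).

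This is exactly the argument the paper later uses for the equality clause of Theorem \ref{thm:S2upperbound}. It is self-contained and re-derives the Bor\r{u}vka index rather than citing it. It also shows directly that equality in (\ref{eq:S2lower}) holds precisely when all $b_j$ vanish.

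What it needs is that the $b_j$, and hence by (\ref{eq:areaformularamified}) the area, are invariants of twistor-equivalence. You should state this in place of the somewhat circular remark that ``the index depends only on the $b_j$ and the area.'' It holds because $\SO(2n+1,\C)$ acts on $\mathrm{Fl}$ by biholomorphisms preserving $\mathcal{D}^{1,0}$, so it preserves the ramification of every $\Psi_r$. With that in hand, Karpukhin's theorem is not needed.

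\textbf{The rigidity step.} Citing Barbosa, as the paper does, is fine. Your alternative sketch, however, contains a false inference: a rational normal curve lying in the quadric $Z_1$ need not be $\SO(2n+1,\C)$-equivalent to the Veronese curve. A rational normal curve lies on many inequivalent nondegenerate quadrics. For $n=2$, the curve $(1,t,t^2,t^3,t^4)$ lies on $x_0x_4-(1-\lambda)x_1x_3-\lambda x_2^2=0$ for every $\lambda$, and this quadric is nondegenerate for $\lambda\neq 0,1$. But its osculating $2$-planes are isotropic only for $\lambda=-3$, which is the $\mathrm{SL}(2)$-invariant quadric.

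To make that route work you must use two further facts:
\begin{itemize}
\item The osculating $n$-planes of $\Psi_n$ are isotropic, which follows from total isotropy.
\item On the Veronese curve, this isotropy condition determines the symmetric bilinear form up to scale as the $\mathrm{SL}(2)$-invariant one.
\end{itemize}
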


\begin{proof} 
	Applying Theorem \ref{thm:genlowbound} to the case $g=0,$ the $h^1$ terms vanish because $\deg (\mathcal{G}_r \otimes \mathcal{G}_{r+1}) = a_r+a_{r+1} \geq 4r+2 > -2.$ Therefore,
	\begin{equation}\label{eq:lowerboundineqs}
		\begin{aligned}
			\operatorname{Ind}_\R f & \geq (n-2) \frac{\operatorname{Area}(f)}{\pi} + n(n-1) + 2a_n \\
			                  & \geq (n-2) \frac{\operatorname{Area}(f)}{\pi} + {n(n+3)}.     
		\end{aligned}
	\end{equation}
	
	In the case of equality we must have equality throughout (\ref{eq:lowerboundineqs}), so $a_n = 2n$ and, by (\ref{eq:spheredegbound}), $b_j = 0$ for $1 \leq j \leq n.$ Thus, since $b_1 = 0,$ the immersion $f$ is unbranched. Furthermore, $a_r = 2r$ for $1 \leq r \leq n$ and by (\ref{eq:areaformula}) the area of $\Sigma$ is $2 \pi n(n+1).$ A result of Barbosa \cite{Barbosa75} then implies that $f(S^2)$ is twistor-equivalent to the Bor\r{u}vka sphere. For the converse, Ball and Madnick \cite{BallMadnick26} proved that the index of the Bor\r{u}vka sphere equals $n(n-1)(2n+1)$ and Karpukhin \cite[Cor 3.10]{Karpukhin21} proved that the index is preserved under twistor deformations.
\end{proof}

\begin{cor}\label{cor:S2boruvlowerindex}
	Let $f: S^2 \to S^{2n}$ be a linearly full, branched minimal immersion. Then
	\begin{equation}\label{eq:niceS2lower}
		\operatorname{Ind}_\R f \geq n(n-1)(2n+1),
	\end{equation}
	with equality if and only if $f$ is twistor-equivalent to the Bor\r{u}vka immersion.
\end{cor}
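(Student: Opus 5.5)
The plan is to combine Theorem \ref{thm:S2lowerbound} with the area lower bound (\ref{eq:generalarealowerbound}) specialized to $g=0$. For $g=0$ that bound reads $\operatorname{Area}(f)/\pi \geq 2n(n+1)$. Alternatively, and more directly, it follows from (\ref{eq:areaformularamified}), which gives $\operatorname{Area}(f) = 2\pi n(n+1) + 2\pi\sum_j (n-j+1)b_j \geq 2\pi n(n+1)$, with equality if and only if all $b_j=0$.

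For $n \geq 2$ the coefficient $n-2$ is nonnegative, so substituting into (\ref{eq:S2lower}) gives
\begin{equation*}
\operatorname{Ind}_\R f \geq (n-2)\cdot 2n(n+1) + n(n+3) = n\bigl(2n^2-2n-4+n+3\bigr) = n(2n^2-n-1) = n(n-1)(2n+1).
\end{equation*}
This is the desired inequality. The case $n=1$ is degenerate: the right-hand side is $0$, which holds trivially. If the paper implicitly assumes $n\geq 2$, I will note that assumption; in that case the area term enters with a nonnegative coefficient.

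For the equality statement, suppose $\operatorname{Ind}_\R f = n(n-1)(2n+1)$. Then equality must hold in (\ref{eq:S2lower}), so Theorem \ref{thm:S2lowerbound} already shows that $f$ is twistor-equivalent to the Bor\r{u}vka immersion. (For $n=2$ the area step is vacuous because the coefficient is $0$, but equality in (\ref{eq:S2lower}) still suffices.) Conversely, if $f$ is twistor-equivalent to the Bor\r{u}vka immersion, then its index equals that of the Bor\r{u}vka sphere, which is $n(n-1)(2n+1)$ by \cite{BallMadnick26} together with Karpukhin's invariance under twistor deformations \cite[Cor 3.10]{Karpukhin21}. This is exactly as cited in the proof of Theorem \ref{thm:S2lowerbound}.

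There is no real obstacle here. The only points needing care are the sign of $n-2$, so that the area bound may be substituted, and the verification that equality forces equality in (\ref{eq:S2lower}) rather than merely in the area bound. Both are immediate from the chain of inequalities.
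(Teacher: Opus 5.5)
Your proposal is correct and follows essentially the paper's argument: substitute the $g=0$ area bound $\operatorname{Area}(f)\geq 2\pi n(n+1)$ into Theorem \ref{thm:S2lowerbound}. For equality, the chain of inequalities forces equality in (\ref{eq:S2lower}), and the converse comes from the Ball--Madnick index computation together with Karpukhin's twistor invariance. Reading the area bound directly off (\ref{eq:areaformularamified}) is slightly more elementary than the paper's appeal to (\ref{eq:generalarealowerbound}). You are also right to restrict to $n\geq 2$: for $n=1$ every branched cover $S^2\to S^2$ has index $0$, so the equality clause genuinely needs $n \geq 2$.
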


\begin{proof}
	The area bound (\ref{eq:generalarealowerbound}) for $g=0$ implies $\operatorname{Area}(f) \geq 2 \pi n(n+1),$ so the inequality of Theorem \ref{thm:S2lowerbound} implies (\ref{eq:niceS2lower}). The proof of the equality statement of Theorem \ref{thm:S2lowerbound} shows that in the equality case $\operatorname{Area}(f) = 2 \pi n(n+1),$ so the right hand side of (\ref{eq:S2lower}) is $n(n-1)(2n+1).$
\end{proof}

\begin{theorem} \label{thm:S2upperbound}
	Let $f: S^2 \to S^{2n}$ be a linearly full, branched minimal immersion. Then
	\begin{equation}\label{eq:S2upperM}
		\begin{aligned}
			\operatorname{Ind}_\R f & \leq ({n-1}) \left(\frac{\operatorname{Area}(f)}{\pi} - n \right). 
		\end{aligned}
	\end{equation}
    If, furthermore, $f$ and the intermediate twistor lifts $\Psi_1, \ldots, \Psi_{n-1}$ are unbranched, then
    \begin{equation*}
        \operatorname{Ind}_\R f = (n-1)\left(\frac{\operatorname{Area}(f)}{\pi} - n\right).
    \end{equation*}
\end{theorem}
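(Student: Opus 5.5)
The plan is to specialize the general two-sided bound (\ref{eq:twosidedbound}) to $g=0$. By Calabi's Theorem \ref{thm:calabi}, $f$ is totally isotropic, so Theorems \ref{thm:genlowbound} and \ref{thm:upperbound} apply.

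For the upper bound, it suffices to show $\sigma_+(f)=0$, that is, $h^1(\mathcal{K}\otimes\mathcal{E}_{k+1})=0$ for $0\le k\le n-2$. As in the proof of Theorem \ref{thm:upperbound}, $\mathcal{K}\otimes\mathcal{E}_{k+1}$ has a holomorphic filtration whose successive line bundle quotients are $\mathcal{K}\otimes\mathcal{G}_{k+1}\otimes\mathcal{G}_r$ for $k+2\le r\le n$. On $S^2$ we have $\deg\mathcal{K}=-2$, and (\ref{eq:spheredegbound}) gives $a_j\ge 2j$. Hence each quotient has degree
\[
a_{k+1}+a_r-2 \ge 2(k+1)+2(k+2)-2>-2 .
\]
Proposition \ref{prop:degreecohomobounds} then gives $h^1=0$ for every quotient, and Proposition \ref{prop:Filtration-General}(b) gives $h^1(\mathcal{K}\otimes\mathcal{E}_{k+1})=0$. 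Substituting $g=0$ and $\sigma_+(f)=0$ into Theorem \ref{thm:upperbound} yields (\ref{eq:S2upperM}).

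For the equality statement, I use the lower half of (\ref{eq:twosidedbound}). As already noted in the proof of Theorem \ref{thm:S2lowerbound}, $\sigma_-(f)=0$ on $S^2$, because $\deg(\mathcal{G}_k\otimes\mathcal{G}_{k+1})\ge 4k+2>-2$. So the lower bound reads $\mathrm{M}(f)-2\mathrm{R}(f)\le \operatorname{Ind}_\R f$. The extra hypothesis says that $f=\pi_n\circ\Psi_1$ is unbranched, giving $b_1=0$, and that $\Psi_2,\dots,\Psi_{n-1}$ are unbranched, giving $b_j=0$ for $1\le j\le n-1$. Here I use that $b_j$ is the total ramification degree of $\Psi_j$, since $B_j$ is the ramification divisor of $\Psi_j$. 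Therefore $\mathrm{R}(f)=\sum_{j=1}^{n-1}(n-j)b_j=0$. The two bounds then coincide at
\[
\mathrm{M}(f)=(n-1)\bigl(\operatorname{Area}(f)/\pi-n\bigr),
\]
which forces equality.

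The main point needing care is the derivation of (\ref{eq:twosidedbound}) itself, namely the rewriting of the lower bound of Theorem \ref{thm:genlowbound} as $\mathrm{M}(f)-2\mathrm{R}(f)$. I would verify this identity directly from (\ref{eq:areaformularamified}) at $g=0$. Put $A=\operatorname{Area}(f)/\pi$. By (\ref{eq:areaformularamified}), $A=2n(n+1)+2\sum_j(n-j+1)b_j$. The claim is
\[
(n-2)A+n(n+3)+2\sum_{j=1}^{n} b_j=(n-1)(A-n)-2\sum_{j=1}^{n}(n-j)b_j .
\]
This is equivalent to
\[
A=n(n+3)+n(n-1)+2\sum_{j=1}^{n}(n-j+1)b_j=2n(n+1)+2\sum_{j=1}^{n}(n-j+1)b_j ,
\]
which is exactly (\ref{eq:areaformularamified}). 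Note that the term $j=n$ contributes nothing to $\mathrm{R}(f)$, so the ramification $b_n$ of $\Psi_n$ is allowed.
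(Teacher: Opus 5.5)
Your proposal is correct and follows the paper's proof essentially step for step. You kill $\sigma_+(f)$ using the filtration of $\mathcal{K}\otimes\mathcal{E}_{k+1}$ by line bundles $\mathcal{K}\otimes\mathcal{G}_{k+1}\otimes\mathcal{G}_r$ of degree $>-2$, and you get the equality case from $\sigma_-(f)=0$ and $\mathrm{R}(f)=0$, which make the two sides of (\ref{eq:twosidedbound}) coincide. Your explicit check, via (\ref{eq:areaformularamified}), that the lower bound of Theorem \ref{thm:genlowbound} equals $\mathrm{M}(f)-2\mathrm{R}(f)-2\sigma_-(f)$ is also correct, and it fills in a step the paper only asserts.
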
 

\begin{proof}
	The line bundle quotients $\mathcal{K} \otimes \mathcal{G}_{k+1} \otimes \mathcal{G}_r$ of $\mathcal{K} \otimes \mathcal{E}_{k+1}$ have degree $a_{k+1}+a_r-2 \geq 2k + 2r > -2$ by (\ref{eq:spheredegbound}). Therefore, by Proposition \ref{prop:Filtration-General}, each $h^1(\mathcal{K} \otimes \mathcal{E}_{k+1})$ in the bound of Theorem \ref{thm:upperbound} vanishes. This gives $\sigma_+(f) = 0,$ so the upper bound in (\ref{eq:twosidedbound}) gives (\ref{eq:S2upperM}).

    If $f$ and the $\Psi_1, \ldots, \Psi_{n-1}$ are unbranched, then we have $b_1 = \cdots = b_{n-1} = 0,$ so $\mathrm{R}(f) = 0.$ The argument of Theorem \ref{thm:S2lowerbound} implies $\sigma_-(f) = 0,$ so we get equality $\operatorname{Ind}_\R f = \mathrm{M}(f)$ throughout (\ref{eq:twosidedbound}).
\end{proof}

The equality statement of Theorem \ref{thm:S2upperbound} is more general than that of Theorem \ref{thm:S2lowerbound}, since there do exist minimal immersions $f: S^2 \to S^{2n}$ with $b_j = 0$ for $1 \leq j \leq n-1$ but $b_n \neq 0,$ for example those of Barbosa \cite[\S7]{Barbosa75} (see also \cite{Ejiri86}). It would be interesting to determine whether the locus $\operatorname{R}(f)=0$ is dense in each component of the fixed-degree moduli space \cite{Fernandez12}.

\begin{theorem}\label{thm:S2NullityTheorem}
    Let $f: S^{2} \to S^{2n}$ be a linearly full, branched minimal immersion. Then
    \begin{equation*}
        \frac{\operatorname{Area}(f)}{\pi}+2(n^2 - 3) - 2 b_1 \leq \operatorname{Null}_\R(f) \leq \frac{\operatorname{Area}(f)}{\pi}+2(n^2 - 3) - 2 b_1 + 4 \mathrm{R}(f).
    \end{equation*}
    If, furthermore, $f$ and the intermediate twistor lifts $\Psi_1, \ldots, \Psi_{n-1}$ are unbranched, then
    \begin{equation*}
        \operatorname{Null}_\R f = \frac{\operatorname{Area}(f)}{\pi}+2(n^2 - 3).
    \end{equation*}
\end{theorem}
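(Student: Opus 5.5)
We specialize the two-sided nullity bound (\ref{eq:twosidednullity}) to $g=0$. By Theorem \ref{thm:calabi}, a linearly full branched minimal immersion $f: S^2 \to S^{2n}$ is totally isotropic. So Theorem \ref{thm:nullitybound} and Proposition \ref{prop:sumh0gkgk1} apply, and they give
\begin{equation*}
\mathrm{N}(f) \leq \operatorname{Null}_\R(f) \leq \mathrm{N}(f) + 4\mathrm{R}(f) + 4\sigma_-(f),
\end{equation*}
with $\mathrm{N}(f) = \frac{\operatorname{Area}(f)}{\pi} + 2(n^2-3) - 2b_1 + 2h^1(\mathcal{G}_{n-1}\otimes\mathcal{G}_n)$ when $g=0$.

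It therefore suffices to show that all the $h^1$ terms vanish on $S^2$. We need $h^1(\mathcal{G}_k\otimes\mathcal{G}_{k+1}) = 0$ for $1 \leq k \leq n-1$; this covers both $\sigma_-(f)$ and the correction term in $\mathrm{N}(f)$. By (\ref{eq:spheredegbound}), $\deg(\mathcal{G}_k\otimes\mathcal{G}_{k+1}) = a_k + a_{k+1} \geq 4k+2 > -2 = 2g-2$. Proposition \ref{prop:degreecohomobounds}(2) then gives vanishing, exactly as in the proof of Theorem \ref{thm:S2lowerbound}. Substituting $\sigma_-(f) = 0$ and $h^1(\mathcal{G}_{n-1}\otimes\mathcal{G}_n) = 0$ yields the displayed two-sided inequality.

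For the equality statement, assume $f, \Psi_1, \ldots, \Psi_{n-1}$ are unbranched. Then $b_1 = \cdots = b_{n-1} = 0$, so $\mathrm{R}(f) = \sum_{j=1}^{n-1}(n-j)b_j = 0$ and $b_1 = 0$. The upper and lower bounds then coincide and equal $\frac{\operatorname{Area}(f)}{\pi} + 2(n^2-3)$.

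There is no real obstacle; the only point needing care is bookkeeping. We must check that the form of (\ref{eq:twosidednullity}) derived from Theorem \ref{thm:nullitybound} via Proposition \ref{prop:sumh0gkgk1} matches the indexing: the sum over $k=1,\dots,n-1$ of $h^1$ in Proposition \ref{prop:sumh0gkgk1} splits into $\sigma_-(f)$ plus the $k=n-1$ term absorbed into $\mathrm{N}(f)$. Since every $h^1$ vanishes on $S^2$, any discrepancy there is harmless.
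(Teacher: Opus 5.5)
Your proposal is correct and follows the paper's own proof: you specialize the two-sided bound (\ref{eq:twosidednullity}) to $g=0$, kill $\sigma_-(f)$ and $h^1(\mathcal{G}_{n-1}\otimes\mathcal{G}_n)$ using the degree bound $a_k+a_{k+1}\geq 4k+2>-2$ as in the proof of Theorem \ref{thm:S2lowerbound}, and then set $\mathrm{R}(f)=b_1=0$ for the equality case. Your explicit appeal to Calabi's theorem and your check of the indexing between Theorem \ref{thm:nullitybound} and Proposition \ref{prop:sumh0gkgk1} go slightly beyond what the paper writes out, and both are sound.
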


\begin{proof}
    The two-sided bound on the nullity follows from specializing (\ref{eq:twosidednullity}) to the case $g=0,$ since the proof of Theorem \ref{thm:S2lowerbound} shows $\sigma_-(f)$ and $h^1(\mathcal{G}_{n-1} \otimes \mathcal{G}_n)$ vanish when $g=0.$

     If $f$ and the $\Psi_1, \ldots, \Psi_{n-1}$ are unbranched then, as in the proof of Theorem \ref{thm:S2upperbound}, $\mathrm{R}(f) = 0$ and we get equality $\operatorname{Null}_\R(f) = \operatorname{N}(f)$ throughout (\ref{eq:twosidednullity}).
\end{proof}

\begin{cor}
    Let $f: S^2 \to S^{2n}$ be a linearly full, branched minimal immersion. Then
	\begin{equation}\label{eq:niceS2nulllower}
		\operatorname{Null}_\R f \geq (2n+3)(2n-2),
	\end{equation}
	with equality if and only if $f$ is twistor-equivalent to the Bor\r{u}vka immersion.
\end{cor}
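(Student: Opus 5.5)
The plan is to deduce the corollary from the lower bound in Theorem \ref{thm:S2NullityTheorem}, rewritten in terms of the ramification degrees $b_j$. By the area formula (\ref{eq:areaformularamified}) with $g=0$,
\begin{equation*}
\frac{\operatorname{Area}(f)}{\pi} = 2n(n+1) + 2\sum_{j=1}^n (n-j+1) b_j .
\end{equation*}
Substituting this into the lower bound gives
\begin{equation*}
\operatorname{Null}_\R f \geq 2n(n+1) + 2(n^2-3) + 2(n-1)b_1 + 2\sum_{j=2}^{n}(n-j+1)b_j .
\end{equation*}
Since every $b_j \geq 0$ and $n \geq 1$, all the ramification terms are nonnegative. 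The constant term is $4n^2+2n-6 = (2n+3)(2n-2)$, which proves (\ref{eq:niceS2nulllower}).

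For the equality statement, first suppose $\operatorname{Null}_\R f = (2n+3)(2n-2)$. Then every ramification term above must vanish. For $n \geq 2$ each coefficient $2(n-1)$ and $2(n-j+1)$ is positive, so $b_1 = \cdots = b_n = 0$. (For $n=1$ the sphere is totally geodesic and the claim is trivial; I will treat $n\ge2$ as in the rest of the paper.) With all $b_j=0$, (\ref{eq:spheredegbound}) gives $a_r = 2r$ and $\operatorname{Area}(f) = 2\pi n(n+1)$, and $f$ is unbranched. Barbosa's result, cited in the proof of Theorem \ref{thm:S2lowerbound}, then shows that $f$ is twistor-equivalent to the Bor\r{u}vka immersion.

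For the converse, suppose $f$ is twistor-equivalent to the Bor\r{u}vka immersion. I expect this direction to be the main obstacle, because it requires knowing that the branching data are twistor invariants. I will use that twistor deformations preserve the degrees $a_r$ and the area, which is the setting in which Barbosa's classification and Karpukhin's invariance are phrased. Then $b_j = 0$ for every $j$, so $\mathrm{R}(f)=0$ and $b_1=0$. The exact formula in Theorem \ref{thm:S2NullityTheorem} for unbranched $f,\Psi_1,\dots,\Psi_{n-1}$ gives
\begin{equation*}
\operatorname{Null}_\R f = 2n(n+1) + 2(n^2-3) = (2n+3)(2n-2).
\end{equation*}
If one prefers not to rely on invariance of the $b_j$, an alternative is to quote the nullity of the Bor\r{u}vka sphere from \cite{BallMadnick26} together with a twistor-invariance statement for the nullity. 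I would first try the direct route above, since twistor-equivalent curves have the same area and are unbranched at minimal area.
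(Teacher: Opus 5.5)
Your proof is correct. The inequality and the forward half of the equality clause follow the paper's argument; its proof just points to the logic of Corollary~\ref{cor:S2boruvlowerindex}. Your use of (\ref{eq:areaformularamified}) also fills in a step the paper leaves implicit. Unlike the index bound, the nullity lower bound $\operatorname{Area}(f)/\pi + 2(n^2-3) - 2b_1$ cannot be controlled by the area bound (\ref{eq:generalarealowerbound}) alone, because of the $-2b_1$ term. What is needed is exactly your rewriting, in which $b_1$ appears with coefficient $2(n-1)\geq 0$.

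The converse is where your route differs. The paper's template, Theorem~\ref{thm:S2lowerbound}, obtains its converse from external input: the computation for the Bor\r{u}vka sphere in \cite{BallMadnick26}, plus Karpukhin's twistor invariance, which the paper quotes only for the index. You instead combine the exact formula of Theorem~\ref{thm:S2NullityTheorem} (valid when $b_1=\mathrm{R}(f)=0$) with invariance of area under twistor deformations. That invariance is elementary:
\begin{itemize}
\item A twistor deformation replaces $\widehat{\Psi}$ by $g\cdot\widehat{\Psi}$ with $g\in\SO(2n+1,\C)$.
\item Since $g$ is linear, it carries the osculating flag $\mathcal{P}_r$ of $\Psi_n$ holomorphically isomorphically onto that of the deformed curve.
\item Hence every $a_r$ is unchanged, and so is $\operatorname{Area}(f)=2\pi\sum_r a_r$ by (\ref{eq:areaformula}).
\item Minimal area then forces all $b_j=0$ by (\ref{eq:areaformularamified}).
\end{itemize}
This keeps the corollary self-contained within the paper, and it recovers the nullity of the Bor\r{u}vka sphere rather than assuming it.

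One correction to your aside on $n=1$: that case is not trivial for the equality clause; the clause is false there. For $n=1$ the normal bundle is zero, so $\operatorname{Null}_\R f = 0 = (2n+3)(2n-2)$ for every branched cover $S^2\to S^2$. Yet covers of degree at least $2$ have area $4\pi d > 4\pi$, so by the invariance above they are not twistor-equivalent to the identity. The statement therefore genuinely needs $n\geq 2$, as the paper tacitly assumes. This does not affect your argument for $n\geq 2$.
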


\begin{proof}
    This follows from Theorem \ref{thm:S2NullityTheorem} using the same logic as the proof of Corollary \ref{cor:S2boruvlowerindex}.
\end{proof}

\subsection{The $g=1$ case}\label{ssect:genusone}

\indent \indent We next consider the case where $g=1,$ so $\Sigma \cong E$ is an elliptic curve. In this case, the canonical bundle $\mathcal{K}$ is trivial, so, specializing the discussion of \S\ref{ssect:degram}, we have
\begin{equation*}
    \mathcal{G}_r = \mathcal{O}(B_1 + \cdots + B_r), \quad a_r = \sum_{j=1}^r b_j,
\end{equation*}
and $0 \leq a_1 \leq \cdots \leq a_n$. In particular, 
\begin{equation*}
    \mathrm{R}(f) = \sum_{r=1}^{n-1} a_r.
\end{equation*}
The area identity (\ref{eq:areaformula}) implies $a_n > 0,$ so there is a lowest index $r$ with $a_r > 0.$ Denote this index by $\rho \geq 1.$

We have $\deg(\mathcal{G}_k \otimes \mathcal{G}_{k+1}) = a_k + a_{k+1},$ so, by Proposition \ref{prop:degreecohomobounds},
\begin{equation*}
    h^1(\mathcal{G}_k \otimes \mathcal{G}_{k+1}) = \begin{cases}
        1 & \text{if} \:\: k \leq \rho-2 \\
        0 & \text{if} \:\: k \geq \rho-1.
    \end{cases}
\end{equation*}
Therefore $\sigma_-(f) = \max \lbrace \rho-2, 0 \rbrace.$

\begin{lemma}\label{lem:g1asigbound}
    Suppose $f: E \to S^{2n}$ is a linearly full, totally isotropic branched immersion of an elliptic curve. For $n \geq 3,$
    \begin{equation*}
        a_n - \sigma_-(f) \geq  n + \left\lfloor\frac{n}{2} \right\rfloor + 3.
    \end{equation*}
\end{lemma}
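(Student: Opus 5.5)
The plan is to use only the degree bounds of Corollary \ref{cor:asumbound} with $g=1$, together with the structure of the degrees found above. From the discussion preceding the lemma we know three things. First, $0 = a_1 = \cdots = a_{\rho-1}$. Second, $0 < a_\rho \leq \cdots \leq a_n$. Third, $\sigma_-(f) = \max\lbrace \rho-2,0\rbrace$. Write $m = n-\rho+1 \geq 1$ for the number of nonzero $a_j$.

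With $g=1$, Corollary \ref{cor:asumbound} gives two inputs:
\begin{equation*}
a_n \geq 2n+1, \qquad \sum_{j=1}^n a_j \geq n(n+1)+2.
\end{equation*}
In the second sum only the last $m$ terms are nonzero, and each is at most $a_n$. Hence $m\,a_n \geq n(n+1)+2$, that is,
\begin{equation*}
a_n \geq \frac{n(n+1)+2}{m}.
\end{equation*}

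\textbf{Case $\rho = 1$.} Here $\sigma_-(f)=0$, and $a_n \geq 2n+1 \geq n + \lfloor n/2\rfloor + 3$. The last inequality is equivalent to $n - \lfloor n/2 \rfloor \geq 2$, which holds for $n \geq 3$.

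\textbf{Case $\rho \geq 2$.} Here $\sigma_-(f) = \rho - 2 = n+1-m$. The two lower bounds on $a_n$ give
\begin{equation*}
a_n - \sigma_-(f) \geq \max\left\lbrace\, n+2+m,\ \ \frac{n(n+1)+2}{m} - n + m - 1 \,\right\rbrace .
\end{equation*}
Set $h = \lfloor n/2\rfloor$.
\begin{itemize}
\item If $m \geq h+1$, the first entry is already at least $n+h+3$.
\item If $m \leq h \leq n/2$, use the second entry. Since $m \leq n/2$, we have $n(n+1)/m \geq 2(n+1)$. Therefore the second entry is at least $n + 1 + m + 2/m$, where $2/m>0$. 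This alone is not quite enough, so I would sharpen it as follows. The map $m \mapsto n(n+1)/m + m$ is decreasing for $m < \sqrt{n(n+1)}$, so on $1 \leq m \leq h$ its minimum is attained at $m=h$. It then suffices to check
\begin{equation*}
\frac{n(n+1)+2}{h} - n + h - 1 \geq n+h+3, \quad\text{i.e.}\quad n(n+1)+2 \geq h(2n+4).
\end{equation*}
This follows from $h \leq n/2$, since then $h(2n+4) \leq n^2+2n < n^2+n+2$ whenever $n < 2$. That fails for $n \geq 3$, so here the plan needs repair.
\end{itemize}

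The repair is where I expect the main obstacle: the constants at the boundary $m \approx n/2$. I would first use integrality, since $a_n \geq \lceil (n(n+1)+2)/m \rceil$. I would then combine the averaging bound with the $r=\rho$ case of Corollary \ref{cor:asumbound}, namely
\begin{equation*}
\sum_{j=\rho}^n a_j \geq m(2n+1-m)+1,
\end{equation*}
which is the stronger estimate when $m$ is close to $n/2$. If the pure averaging bound still falls short by one, I would split into $n$ even and $n$ odd and check the single boundary value $m = h$ directly. The remaining work is routine bookkeeping of these elementary inequalities.
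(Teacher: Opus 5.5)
\textbf{The second case fails only because of an arithmetic slip, and once that is corrected your proof is essentially the paper's.} You use the same two inputs from Corollary~\ref{cor:asumbound}, namely $a_n \ge 2n+1$ and $\sum_j a_j \ge n(n+1)+2$. Your case split is also the same: your condition $m \ge \lfloor n/2\rfloor+1$ is exactly the paper's $\rho \le \lceil n/2\rceil$.

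\textbf{The slip.} From $m=n-\rho+1$ you get $\rho=n-m+1$, so $\sigma_-(f)=\rho-2=n-m-1$, not $n+1-m$. Your first entry $n+2+m=(2n+1)-(n-m-1)$ already uses the correct value. Your second entry uses the wrong one, so it is too small by $2$. With the correct value it reads $\frac{n(n+1)+2}{m}-n+m+1$. Your monotonicity reduction to $m=h$ then only needs
\[
n(n+1)+2 \ge h(2n+2),
\]
and this is immediate from $2h\le n$. The paper packages the same step as $(n-\rho+1)a_n \ge \sum_j a_j > n(n+1) \ge (n-\rho+1)(n+2\rho)$ whenever $2\rho\ge n+2$. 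This gives $a_n>n+2\rho$ and hence $a_n-\sigma_-(f)>n+\rho+2$.

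\textbf{The repair you sketch would not have worked.}
\begin{itemize}
\item The $r=\rho$ case of Corollary~\ref{cor:asumbound} is never stronger than what you already have. Since $a_j=0$ for $j<\rho$, we have $\sum_{j\ge\rho}a_j=\sum_j a_j\ge n(n+1)+2$. By contrast, $m(2n+1-m)+1\le n(n+1)+1$ for all $m\le n$.
\item Integrality and an even/odd split cannot help either, because with your shifted $\sigma_-$ the target simply does not follow from the available inputs. For example, take $n=4$, $\rho=3$, $a_1=a_2=0$, $a_3=a_4=11$. This satisfies $a_4\ge 9$ and $\sum_j a_j\ge 22$. Yet $a_4-(n+1-m)=8<9=n+\lfloor n/2\rfloor+3$.
\end{itemize}
So your bookkeeping would have stalled at $m=h$ for even $n$, and the correct fix is just to correct $\sigma_-(f)$.
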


\begin{proof}
    By Corollary \ref{cor:asumbound},
    \begin{equation*}
        a_n \geq 2n+1, \qquad \sum_{j=1}^n a_j \geq n(n+1)+2.
    \end{equation*}
    
    If $\rho \leq \lceil n / 2 \rceil,$ then $\sigma_-(f) \leq \lceil n/2\rceil-2$ since $n \geq 3.$ Therefore,
    \begin{equation*}
        a_n-\sigma_-(f) \geq 2n + 3 - \left\lceil \frac{n}{2} \right\rceil = n + \left\lfloor \frac{n}{2} \right\rfloor + 3.
    \end{equation*}
    
    On the other hand, if $\rho \geq \lceil n / 2 \rceil + 1,$ then $\sigma_-(f)=\rho-2$ and $2\rho\geq n+2$. We have that $a_j=0$ for $j<\rho$ and  $a_n \geq a_j$ for every $j,$ so
    \begin{equation*}
    \begin{aligned}
        (n-\rho+1)a_n &\geq \sum_{j=\rho}^n a_j = \sum_{j=1}^n a_j
        > n(n+1)\\
        &\geq n(n+1)+\rho(n+2-2\rho) =(n-\rho+1)(n+2\rho).
    \end{aligned}
    \end{equation*}
    This implies $a_n > n+2\rho,$ so
    \begin{equation*}
        a_n-\sigma_-(f) = a_n-\rho+2 > n+\rho + 2 \geq n + \left\lfloor\frac{n}{2} \right\rfloor + 3. \qedhere
    \end{equation*}
\end{proof}

\begin{lemma}\label{lem:g1cohomvanish}
    Suppose $f: E \to S^{2n}$ is a linearly full, totally isotropic branched immersion of an elliptic curve. Then
    \begin{equation*}
        H^1(\mathcal{N}) = 0, \quad H^1(\mathcal{E}_k) = 0, \:\: 1 \leq k \leq n-1.
    \end{equation*}
    Consequently, $\sigma_+(f) = 0.$
\end{lemma}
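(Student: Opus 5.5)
Since $g=1$, the canonical bundle $\mathcal{K}$ is trivial. A line bundle $\mathcal{L}$ on $E$ therefore has $H^1(\mathcal{L}) \cong H^0(\mathcal{L}^*)^*$, which vanishes as soon as $\deg \mathcal{L} > 0$ by Proposition \ref{prop:degreecohomobounds}. When $\deg\mathcal{L}=0$ it vanishes exactly when $\mathcal{L}$ is nontrivial. The plan is to apply Proposition \ref{prop:Filtration-General}(b) to the filtrations already used. The successive line bundle quotients of $\mathcal{N}$ are $\mathcal{G}_r$ for $2\le r\le n$. The quotients of $\mathcal{E}_k=\mathcal{G}_k\otimes\mathcal{Q}_{k+1}$ are $\mathcal{G}_k\otimes\mathcal{G}_r$ for $k+1\le r\le n$. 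It therefore suffices to show $H^1(\mathcal{G}_r)=0$ for $r\ge 2$ and $H^1(\mathcal{G}_k\otimes\mathcal{G}_r)=0$ for $1\le k<r\le n$. Once this is done, $\sigma_+(f)=0$ follows at once, because $\mathcal{K}\otimes\mathcal{E}_{k+1}\cong\mathcal{E}_{k+1}$ when $\mathcal{K}$ is trivial.

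Here is where the plan breaks down. We have $\mathcal{G}_r=\mathcal{O}(B_1+\cdots+B_r)$, which is effective of degree $a_r\ge 0$. If $a_r>0$, the vanishing is immediate. If $a_r=0$, however, then $\mathcal{G}_r\cong\mathcal{O}$ is \emph{trivial}, since the divisor is $0$, and then $h^1=1$. The same problem affects $\mathcal{G}_k\otimes\mathcal{G}_r=\mathcal{O}(\text{effective divisor})$, whose $h^1$ vanishes iff $a_k+a_r>0$. In terms of the index $\rho$ defined above, the needed vanishing holds iff $a_2>0$ for $\mathcal{N}$, i.e.\ $\rho\le 2$, and iff $a_k+a_{k+1}>0$ for all $k\ge1$, i.e.\ $\rho\le 2$ again. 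The computation preceding Lemma \ref{lem:g1asigbound} itself gives $h^1(\mathcal{G}_k\otimes\mathcal{G}_{k+1})=1$ for $k\le\rho-2$, and $\mathcal{A}_k\subset\mathcal{E}_k$ is a subbundle. So if $\rho\ge3$, the long exact sequence shows $H^1(\mathcal{E}_1)\twoheadleftarrow$ something nonzero only if the quotient also contributes. More directly, the top quotient $\mathcal{G}_1\otimes\mathcal{G}_n$ has positive degree, but the bottom piece $\mathcal{G}_1\otimes\mathcal{G}_2\cong\mathcal{O}$ has $h^1=1$ and may survive.

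This leaves two options. First, I would try to argue that $\rho\le 2$ always holds, for instance by showing that if $b_1=b_2=0$ the relevant holomorphic data force $H^1$ contributions to be killed through the connecting maps $H^0(\text{quotient})\to H^1(\mathcal{O})$. Second, failing that, I would handle the connecting homomorphism directly. For $0\to\mathcal{O}\to\mathcal{B}\to\mathcal{Q}\to0$ with $H^1(\mathcal{Q})=0$, we have $H^1(\mathcal{B})=0$ iff the extension class pairs nontrivially with some section of $\mathcal{Q}$, i.e.\ iff $\delta:H^0(\mathcal{Q})\to H^1(\mathcal{O})$ is onto. The extension class of $0\to\mathcal{G}_k\to\mathcal{Q}_k\to\mathcal{Q}_{k+1}\to0$ is represented by $-\Phi_{k+1}^\dagger$ (from (\ref{eq:Qconn})). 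Surjectivity of $\delta$ would then reduce to finding a holomorphic section $s$ of the quotient with $\int\langle\Phi^\dagger_{k+1}s,\cdot\rangle\neq0$ against the constant element of $H^0(\mathcal{K})$. This is the main obstacle, and it is where I am least sure the lemma holds as stated without a hypothesis such as $b_1+b_2>0$ or $\rho\le 2$. I would first verify whether the paper intends this, and otherwise attempt the connecting-map argument, building $s$ from the nonvanishing holomorphic section $\Phi_{k+1}$.
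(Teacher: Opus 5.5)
Your proposal does not establish the lemma. You correctly see that the graded-piece criterion of Proposition \ref{prop:Filtration-General}(b) breaks down once $\rho\ge 3$. In that case the pieces $\mathcal{G}_r$ of $\mathcal{N}$ and $\mathcal{G}_k\otimes\mathcal{G}_r$ of $\mathcal{E}_k$ with $r<\rho$ are trivial and have $h^1=1$. But you then leave this case open and suggest the lemma might need an extra hypothesis. It does not. Vanishing of $H^1$ on every graded piece is only a sufficient condition, and the $H^1$ of the trivial sub-pieces is killed by connecting homomorphisms.

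Your option (1) is not the way forward: nothing in the hypotheses forces $b_1+b_2>0$, and the paper treats $\rho\ge 3$ as a genuine case, e.g.\ in Lemma \ref{lem:g1asigbound}. Your option (2) is in the right spirit but only sketched. When several trivial pieces are stacked, the hypothesis $H^1(\mathcal{Q})=0$ in your single-extension criterion is itself a statement of the same kind as the lemma. You would therefore need an induction plus an explicit surjectivity argument for each connecting map, and you supply neither. A telling sign is that your argument never uses linear fullness, which is exactly the hypothesis that prevents the trivial pieces from contributing.

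The missing idea is to dualize the whole bundle instead of its filtration. Since $\mathcal{K}$ is trivial, Serre duality gives $h^1(\mathcal{E}_k)=h^0(\mathcal{E}_k^*)$. Moreover $\mathcal{E}_k^*=\mathcal{G}_k^*\otimes\mathcal{Q}_{k+1}^*\cong\mathcal{G}_k^*\otimes\mathcal{P}_{k+1}$, which is a subbundle of $\mathcal{G}_k^*\otimes\underline{\C}^{2n+1}=(\mathcal{G}_k^*)^{\oplus 2n+1}$.
\begin{itemize}
\item If $a_k>0$, the latter has no nonzero holomorphic sections by degree.
\item If $a_k=0$, then $\mathcal{G}_k=\mathcal{O}(B_1+\cdots+B_k)$ is trivial. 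A holomorphic section of $\mathcal{E}_k^*$ is then a holomorphic map $E\to\C^{2n+1}$ with values in $\mathcal{P}_{k+1}$. By compactness it is a constant vector $v$ with $v\in\mathcal{P}_{k+1}(p)\subset\mathcal{P}_1(p)$ for all $p$. Since $f(p)\in\mathcal{P}_1(p)^\perp$, this gives $\langle v,f(p)\rangle_\C=0$ for all $p$. So the real and imaginary parts of $v$ are orthogonal to $f(E)$, and linear fullness forces $v=0$.
\end{itemize}
The same argument with $\mathcal{N}^*\cong\mathcal{P}_2$ (no twist needed) gives $H^1(\mathcal{N})=0$.

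In effect, the non-splitting that your connecting-map approach would have to verify piece by piece is encoded globally in the fact that the holomorphic flag contains no nonzero constant vector. Your final deduction of $\sigma_+(f)=0$ from $\mathcal{K}\otimes\mathcal{E}_{k+1}\cong\mathcal{E}_{k+1}$ is fine.
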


\begin{proof}
    Serre duality implies $h^1(\mathcal{E}_k) = h^0(\mathcal{E}_k^*).$ From the definition of $\mathcal{E}_k$ and $\mathcal{Q}_k,$ we get $\mathcal{E}_k^* = \mathcal{G}_k^* \otimes \mathcal{P}_{k+1}.$ The inclusion $\mathcal{P}_{k+1} \subset \underline{\C}^{2n+1}$ gives an inclusion $\mathcal{E}_k^* \subset (\mathcal{G}^*_{k})^{\oplus 2n+1}.$ If $a_k > 0,$ then $(\mathcal{G}^*_{k})^{\oplus 2n+1}$ has no holomorphic sections by Proposition \ref{prop:degreecohomobounds}. If $a_k = 0,$ then $\mathcal{E}_k^* = \mathcal{P}_{k+1} \subset \underline{\C}^{2n+1}.$ A holomorphic section of $\mathcal{P}_{k+1}$ corresponds to a constant vector $v \in \C^{2n+1}.$ Such a vector $v$ would be an element of $\mathcal{P}_{k+1}(p)$ for every $p \in E,$ and in particular $\langle v, f(p) \rangle_\C = 0$ for every $p \in E.$ Linear fullness of $f$ then implies $v = 0.$ The same argument applies to show $h^1(\mathcal{N}) = h^0(\mathcal{P}_2) = 0.$
\end{proof}

\begin{theorem}
    Suppose $f: E \to S^{2n}$ is a linearly full, totally isotropic branched immersion of an elliptic curve. Then
    \begin{equation*}
        (n-1)\frac{\operatorname{Area}(f)}{\pi} - 2\operatorname{R}(f) - 2\max \lbrace \rho-2, 0 \rbrace \leq \operatorname{Ind}_\R(f) \leq (n-1)\frac{\operatorname{Area}(f)}{\pi}.
    \end{equation*}
    and
    \begin{equation*}
        \frac{\operatorname{Area}(f)}{\pi} - 2b_1 \leq \operatorname{Null}_\R(f) \leq \frac{\operatorname{Area}(f)}{\pi} - 2b_1 + 4\operatorname{R}(f) + 4 \max \lbrace \rho-2, 0 \rbrace.
    \end{equation*}
    In particular, for $n \geq 3,$
    \begin{equation}\label{eq:g1lowbound}
        \begin{aligned}
            \operatorname{Ind}_\R(f) &\geq (n-2)\frac{\operatorname{Area}(f)}{\pi} + 2 \left( n + \left\lfloor \frac{n}{2} \right\rfloor + 3 \right) \\
            & \geq 2(n-1)(n^2+1) + 2 \left\lfloor \frac{n}{2} \right\rfloor.
        \end{aligned}
    \end{equation}
\end{theorem}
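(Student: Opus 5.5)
The plan is to specialize the two-sided bounds (\ref{eq:twosidedbound}) and (\ref{eq:twosidednullity}) to $g=1$, using the genus-one facts just established. With $g=1$ we have $\mathrm{M}(f) = (n-1)\operatorname{Area}(f)/\pi$, since the $n(1-g)$ term vanishes. By Lemma \ref{lem:g1cohomvanish}, $\sigma_+(f)=0$, and the computation preceding Lemma \ref{lem:g1asigbound} gives $\sigma_-(f)=\max\lbrace\rho-2,0\rbrace$. Substituting these into (\ref{eq:twosidedbound}) yields the two-sided index bound as stated.

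For the nullity, $\mathrm{N}(f) = \operatorname{Area}(f)/\pi - 2b_1 + 2h^1(\mathcal{G}_{n-1}\otimes\mathcal{G}_n)$ when $g=1$. Since $\rho\le n$, we have $n-1\ge\rho-1$. By the case formula for $h^1(\mathcal{G}_k\otimes\mathcal{G}_{k+1})$ (with $\deg = a_{n-1}+a_n \ge a_n > 0$ and $\mathcal{K}$ trivial), this forces $h^1(\mathcal{G}_{n-1}\otimes\mathcal{G}_n)=0$. Inserting this and $\sigma_-(f)=\max\lbrace\rho-2,0\rbrace$ into (\ref{eq:twosidednullity}) gives the nullity statement.

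For (\ref{eq:g1lowbound}), I would not use the $\mathrm{M}-2\mathrm{R}$ form. Instead I would return to the first form of Theorem \ref{thm:genlowbound}, which at $g=1$ reads
\[
\operatorname{Ind}_\R f \ge (n-2)\frac{\operatorname{Area}(f)}{\pi} + 2a_n - 2\sigma_-(f) = (n-2)\frac{\operatorname{Area}(f)}{\pi} + 2\bigl(a_n-\sigma_-(f)\bigr).
\]
Lemma \ref{lem:g1asigbound} then gives the first inequality for $n\ge3$. For the second, apply the area bound (\ref{eq:generalarealowerbound}) with $g=1$: since $n\ge3$, $\min\lbrace 2,n(n+1)\rbrace=2$, so $\operatorname{Area}(f)/\pi\ge 2n(n+1)+4$. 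Then
\[
(n-2)\bigl(2n(n+1)+4\bigr)+2n+6+2\lfloor n/2\rfloor = 2n^3-2n^2+2n-2+2\lfloor n/2\rfloor,
\]
which equals $2(n-1)(n^2+1)+2\lfloor n/2\rfloor$.

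The main obstacle is bookkeeping rather than new ideas. One must check that the first form of Theorem \ref{thm:genlowbound} is equivalent to the $\mathrm{M}-2\mathrm{R}-2\sigma_-$ form at $g=1$; this follows from (\ref{eq:areaformularamified}) together with $a_n=\sum_j b_j$. One must also confirm the vanishing of $h^1(\mathcal{G}_{n-1}\otimes\mathcal{G}_n)$ in the edge case $\rho=n$, where $a_{n-1}=0$ but $a_n>0$.
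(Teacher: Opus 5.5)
Your proposal is correct and follows the paper's proof essentially step for step. You specialize (\ref{eq:twosidedbound}) and (\ref{eq:twosidednullity}) at $g=1$ using $\sigma_+(f)=0$, $\sigma_-(f)=\max\lbrace\rho-2,0\rbrace$ and $h^1(\mathcal{G}_{n-1}\otimes\mathcal{G}_n)=0$, then apply Lemma \ref{lem:g1asigbound} to the first form of Theorem \ref{thm:genlowbound} and finish with the area bound (\ref{eq:generalarealowerbound}); your arithmetic for the final constant $2(n-1)(n^2+1)+2\lfloor n/2\rfloor$ also checks out.
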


\begin{proof}
    The two-sided index and nullity bound follow from applying $\sigma_-(f) = \max \lbrace \rho-2, 0 \rbrace,$ $\sigma_+(f) = 0,$ and $h^1(\mathcal{G}_{n-1} \otimes \mathcal{G}_n) = 0$ to (\ref{eq:twosidedbound}) and (\ref{eq:twosidednullity}).

    For $n \geq 3,$ applying the result of Lemma \ref{lem:g1asigbound} to the general lower bound of Theorem \ref{thm:genlowbound} gives the first inequality of (\ref{eq:g1lowbound}). The second follows from the area bound (\ref{eq:generalarealowerbound}).
\end{proof}

\begin{rmk}
    In the case $n=2,$ if $f : E \to S^4$ is unbranched then the sum defining $\sigma_-(f)$ is empty and $\operatorname{R}(f) = b_1 = 0,$ so our calculations replicate the result of Montiel--Urbano \cite{MontUrb97},
    \begin{equation*}
        \operatorname{Ind}_\R(f) = \operatorname{Null}_\R(f) = \frac{\operatorname{Area}(f)}{\pi} \geq 16.
    \end{equation*}
\end{rmk}

\begin{rmk}
    For the case of higher genus $g>1$, we may also give an explicit lower bound in terms of $n,$ $g$, and the area alone. The ramification identities of \S\ref{ssect:degram} give
    \begin{equation*}
        \mathcal{G}_k \cong \mathcal{K}^{-k} \otimes \mathcal{O}(B_1+\cdots+B_k),
    \end{equation*}
    and Serre duality implies
    \begin{equation*}
        \begin{aligned}
        h^1(\mathcal{G}_k\otimes\mathcal{G}_{k+1}) & = h^0 \left( \mathcal{K}^{2k+2} \otimes  \mathcal{O} \left( -2\sum_{j=1}^k B_j - B_{k+1} \right) \right) \\
        &\leq h^0(\mathcal{K}^{2k+2}) = (4k+3)(g-1).
    \end{aligned}
    \end{equation*}
    This gives $\sigma_-(f)\leq(n-2)(2n+1)(g-1).$ Combining this with Theorem \ref{thm:genlowbound} and the estimate $a_n\geq2n+\min\{2n,g\}$ from Corollary \ref{cor:asumbound}, we obtain
    \begin{equation*}
        \operatorname{Ind}_{\R}(f) \geq (n-2)\frac{\operatorname{Area}(f)}{\pi} + 4n + 2 \min\{2n,g\} -(5n^2-7n-4)(g-1).
    \end{equation*}
    However, this estimate discards information about the ramification divisors and is generally weaker than the original bound. It seems likely to the authors that Brill--Noether theory could be used to improve several parts of this argument and give a stronger lower bound on the index depending only on $n,$ $g,$ and the area.
    \end{rmk}

\bibliographystyle{plain}
{\small
\bibliography{Refs}}
{\footnotesize
\Addresses}

\end{document}